\documentclass{article}
\usepackage{amsmath}   
\usepackage{amssymb}   
\usepackage{accents}
\usepackage{graphicx}
\usepackage{color}
\usepackage{tikz}
\usepackage{tikz-cd}
\usetikzlibrary{shapes.geometric,fit,trees}
\usepackage{forest}
\usepackage{paralist}
\usepackage{enumerate}    
\usepackage{amsthm}  
\usepackage[english]{babel}  
\usepackage{textgreek}
\usepackage[normalem]{ulem}
\usepackage{appendix}
\usepackage[shortlabels]{enumitem}

\theoremstyle{theorem}            
\newtheorem{theorem}{Theorem}

\theoremstyle{definition}           

\newtheorem{definition}[theorem]{Definition}

\newtheorem{proposition}[theorem]{Proposition}
\newtheorem{lemma}[theorem]{Lemma}
\newtheorem{corollary}[theorem]{Corollary}

\newtheorem{question}[theorem]{Question}

\newtheorem{remark}[theorem]{Remark}

\newcommand{\ignore}[1]{}
\renewcommand{\phi}{\varphi} 
\renewcommand{\epsilon}{\varepsilon} 

\def\N{\mathbb{N}} 

\def\baire{\mathcal{N}}

\newcommand{\ccX}{\mathcal{X}}

\def\omegack{\omega_1^{\text{ck}}}

\newcommand{\Nats}{\mathbb{N}}

\title{Strongly relativizing reals}
\author{Tyler Arant\footnote{University of California, Los Angeles}}
\date{\today}

\begin{document}

\maketitle

\vspace{-10pt}

\begin{abstract}
For a real $f\in \baire:=\N^\N$, 
it is in general not the case
that every set which is both $\Sigma^1_1(f)$ and $\Pi^1_1(f)$ is the $f$-section
of a $\Delta^1_1$ set.  However, there are reals
$f$ for which this, in fact, does happen; we say that
such a real strongly relativizes $\Delta^1_1$. 
In this paper, we will prove that the reals which
strongly relativize $\Delta^1_1$ are
exactly the hyperlow reals, 
i.e., the $f\in \baire$ with $\omega_1^f=\omegack$.
We will also study reals that strongly relativize
the classes $\Delta^0_\alpha$ for $1\leq \alpha<\omegack$.  We characterize the reals
that strongly relativize $\Delta^0_1$ subsets of $\N$
as the reals which are computably dominated. For $1\leq \alpha<\omegack$, we show that there are continuum many reals which strongly relativize $\Delta^0_\alpha$
subsets of $\N$.  We will also find non-trivial
examples of reals which strongly relativize
$\Delta^0_{\alpha}$ ($1\leq \alpha<\omegack$)
subsets of Baire space.

\end{abstract}

\bigskip

Relativization in descriptive set theory is a way
to translate effective concepts into classical ones.
Working over recursively presented Polish metric spaces,
let $\Gamma$ be one of the (lightface) pointclasses $\Sigma^0_\alpha$, $1\leq \alpha<\omegack$,
or $\Sigma^1_n$, $1\leq n<\omega$.  
If $\mathcal{W}$ is such a space and
$w\in \mathcal{W}$, then $\Gamma$ relativized to $w$, denoted $\Gamma(w)$, is the pointclass consisting of 
all $P\subseteq \mathcal{X}$ such that $P$ is equal to the $w$-section
\[
Q_w:= \{x\in \mathcal{X} : Q(w, x)\}
\]
of some $Q\subseteq \mathcal{W}\times \mathcal{X}$ 
in $\Gamma$.  Such a $P$ has a $\Gamma$ definition which
uses $w$ as a parameter.  
The classical (boldface) pointclass 
corresponding to $\Gamma$,
denoted $\boldsymbol{\Gamma}$, is obtained
by taking the union of all the relativized $\Gamma(w)$ (in fact,
even just by relativizing to all elements of 
the Cantor space $2^\N$).

This type of relativization process works when $\Gamma$ is
one of the
$\Sigma^0_\alpha$, $\Sigma^1_n$ classes (and also works, of course,
for the dual classes $\Pi^0_\alpha$, $\Pi^1_n$); however,
the situation is different for
the self-dual classes $\Delta := \Gamma\cap \neg\Gamma$.
Rather than letting $\Delta(w)$ be the pointclass
of all $w$-sections of $\Delta$ sets,
we define $\Delta$ relativized to $w\in \mathcal{W}$ by
\[
\Delta(w):= \Gamma(w)\cap \neg\Gamma(w).
\]
Then, the corresponding boldface pointclass $\boldsymbol{\Delta}$ is
the union of all the $\Delta(w)$.  Clearly, every 
$w$-section of a $\Delta$ set is in $\Delta(w)$; however,
in general, not every $\Delta(w)$ set is the $w$-section
of a $\Delta$ set.  

Our aim here is to understand for which parameters $w$
it is, in fact, the case that every $\Delta(w)$
set is the $w$-section of a $\Delta$ set.  Throughout,
$\Delta$ will denote any one of $\Delta^0_\alpha$, $1\leq \alpha<\omegack$,
    or $\Delta^1_n$, $1\leq n<\omega$, and 
    we will refer to these classes as the \textbf{self-dual
    Kleene pointclasses}. 
For a self-dual Kleene pointclass $\Delta$, we denote by $\mathcal{S}\Delta(\mathcal{W};w)$ the pointclass
of all $w$-sections of $\Delta$ sets; i.e., $\mathcal{S}\Delta(\mathcal{W};w)$
consists of all pointsets $P\subseteq \mathcal{X}$
such that $P=Q_w$ for some $Q\subseteq \mathcal{W}\times \mathcal{X}$ in $\Delta$.  In this context, we think
of $\mathcal{W}$ as the \textbf{parameter space}.

For a self-dual Kleene pointclass
$\Delta$ and $f\in \baire$, we say that
$f$ \textbf{strongly relativizes $\Delta$
at $\mathcal{X}$} if every $\Delta(f)$ subset of 
$\mathcal{X}$ is in $\mathcal{S}\Delta(\baire; f)$, i.e., if
\[
\mathcal{S}\Delta(\baire; f)\restriction \mathcal{X} = \Delta(f)\restriction \mathcal{X}.
\]
If this holds for all recursively presented Polish metric spaces $\mathcal{X}$, then we simply
say that $f$ \textbf{strongly relativizes $\Delta$}.  

We begin in Section \ref{sec:basic} with some
basic observations about strongly relativizing the
self-dual Kleene pointclasses.  In particular, we will focus
on issues around using $2^\N$ and 
$\baire$ as parameter spaces.

In Section \ref{sec:hyp}, we will completely
characterize the reals which strongly relativize $\Delta^1_1$
(at every space $\mathcal{X}$) as those reals which are
hyperlow.  We will also establish a property of hyperlow reals
that is analogous to a property of computably dominated reals,
which will be further studied in the next section.

Section \ref{sec:recursiveatn} studies strongly relativizing
$\Delta^0_1$ sets.  We will establish
that the elements of $2^\N$ which strongly relativize 
$\Delta^0_1$
are exactly the computably dominated reals in $2^\N$.  
In particular
we will see a close connection between strongly
relativizing $\Delta^0_1$ at $\N$ and truth-table reductions.
We will also show that the $\Pi^0_1$-singletons in $\baire$
strongly relativize $\Delta^0_1$ at $\baire$.  
Putting these results together shows that
the property of strongly relativizing $\Delta^0_1$
subsets of $\baire$ (or even $\N$) is not preserved under
Turing equivalence (on $\baire$).

In Section \ref{sec:arithatn}, we will relativize
some of the results from Section \ref{sec:recursiveatn}
to the classes $\Delta^0_{1+\alpha}$.  Not all
the results relativize, but it is proven that
there are continuum-many reals which strongly
relativize $\Delta^0_{1+\alpha}$ at $\N$.  We
also show that, for any computable ordinal $\alpha>0$, $\Pi^0_{1+\alpha}$-singletons
strongly relativize $\Delta^0_{1+\alpha}$ at every space.

    \bigskip

\noindent \textbf{Notation and terminology.} 
Elements of Baire space $\baire:=\N^\N$ will be denoted by
$f, g, h$, etc.  Elements of Cantor space $2^\N$ will be denoted by $A, B, C,$ etc., thinking of them as subsets of
$\N$, and we will conflate
them with their characteristic functions whenever convenient.
Functions whose domain is Baire space or Cantor space
will be denoted by $F, G, H$, etc. 

We use $\sigma, \tau,$ etc., to denote finite sequences
of integers.  Since elements of $\N^{<\N}$ can be
effectively coded with natural numbers, we consider
$2^{<\N}$ and $\N^{<\N}$ (equipped with the discrete metric) to be recursively
presented Polish metric spaces and freely use them
as arguments in relations.  We write $\sigma\sqsubset \tau$ to mean that $\tau$ strictly extends $\sigma$.
When we write $\sigma\sqsubset A$ for $A\in 2^\N$, we mean
that $\sigma$ is an initial segment of the characteristic function of $A$.

By a \textbf{space}, 
we will always mean a recursively presented Polish metric
space.  See \cite{moschovakis2009} for a development of
the fundamental properties of these spaces.  Spaces will be denoted by $\mathcal{W}, \mathcal{X}, \mathcal{Y}$, etc. 
Every space $\mathcal{X}$ comes with an effective parameterization of basic neighborhoods, which we denote
by $N_s(\mathcal{X})$, $s\in \N$.  Our most important
spaces will be $\baire$, $2^\N$, and $\N$.  For
$\baire$, the basic neighborhoods
can be taken to be $[\sigma] :=\{\alpha\in \baire : \sigma\sqsubset \alpha\}$, and similarly for $2^\N$.

For a pointclass $\Gamma$ and a space $\mathcal{X}$, we denote by $\Gamma\restriction \mathcal{X}$ the collection of
subsets of $\mathcal{X}$ which are in $\Gamma$.  
For a space $\mathcal{X}$, 
$\Sigma^0_1\restriction \mathcal{X}$ is the pointclass
of effectively open subsets of $\mathcal{X}$, i.e., 
those sets $P\subseteq \mathcal{X}$ of the form $\bigcup_{n\in \N}N_{f(n)}(\mathcal{X})$ for a computable $f:\N\rightarrow \N$. This is equivalent to the
the existence of a c.e. relation $R\subseteq \N$ such that
for all $x\in \mathcal{X}$,
\[
P(x) \iff (\exists s)[x\in N_s(\mathcal{X}) \ \& \ R(s)].
\]
If $\mathcal{X}$ is $\baire$ or $2^\N$, we can take
$R$ to be computable.  
The standard normal form for a $\Sigma^0_1$ set $P\subseteq \mathcal{X}\times \mathcal{Y}$ is
\[
P(x, y) \iff (\exists s,t)[x\in N_s(\mathcal{X})\ \& \
y\in N_t(\mathcal{Y})\ \& \ R(s, t)]
\]
for some c.e. relation $R\subseteq \N^2$

An ordinal $\alpha$ is \textbf{computable} if
there is a computable wellordering on $\N$ which
has the same order type as $\alpha$. The 
\textbf{Church-Kleene
ordinal}, denoted $\omegack$, is the least (countable)
ordinal which is not computable.  
Briefly, the pointclass $\Sigma^0_\alpha$ consists of
the Borel sets that have a computable 
(additive) Borel code
of rank $\alpha$.  See \cite[Section 7B]{moschovakis2009} 
for details.
$\Delta^1_1$ is the pointclass of effectively Borel
sets, i.e., sets which
have  computable Borel codes.

For a computable ordinal $\alpha$ and $f\in \baire$,
we denote by $f^{(\alpha)}$ the $\alpha$ Turing jump
of $f$, i.e., we iterate the Turing jump along some
computable presentation of $\alpha$.  By a famous theorem
of Spector (see \cite[Theorem 2.3.1]{chong2015}), up to Turing equivalence 
it does not matter which computable presentation of 
$\alpha$ we use. Note that $f^{(0)}=f$.  

For $f, g\in \baire$, $f\oplus g$ is the join, whose
characteristic function is given by $(f\oplus g)(2n)=f(n)$ and $(f\oplus g)(2n+1) = g(n)$.

\bigskip

\noindent \textbf{Acknowledgments.}  
I would like to thank
Alexander Kechris and Andrew Marks for
their insights and comments on this work.
Patrick Lutz provided very helpful comments and corrections,
and resolved an interesting 
question that the author could not (see Remark
\ref{remark:lutz} and
the appendix).

\section{Basic properties of strongly relativizing}

\label{sec:basic}

Typically, it is most natural to use reals
(i.e., elements of $\baire$) as parameters in definitions.
For many arguments, using elements 
of Cantor space $2^\N$ as parameters
is even more convenient, since $2^\N$ is (effectively) compact. 
Of course, a real $A\in 2^\N$ is also an element
of $\baire$, so for a self-dual Kleene pointclass
we have two pointclasses
to consider: $\mathcal{S}\Delta(2^\N; A)$
and $\mathcal{S}\Delta(\baire; A)$.  
We always have $\mathcal{S}\Delta(\baire; A)
    \subseteq \mathcal{S}\Delta(2^\N; A)$ because the natural inclusion mapping from $2^\N$ to $\baire$
    is computable.  For most of
    our self-dual Kleene classes, the reverse inclusion also holds.

\begin{proposition} 
Let $\Delta$ be a self-dual Kleene pointclass other than
$\Delta^0_1$.  For any $A\in 2^\N$, 
$\mathcal{S}\Delta(2^\N; A) = \mathcal{S}\Delta(\baire; A)$
\end{proposition}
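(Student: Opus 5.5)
The inclusion $\mathcal{S}\Delta(\baire; A)\subseteq \mathcal{S}\Delta(2^\N; A)$ was already noted above, so only the reverse inclusion needs proof. Fix $Q\subseteq 2^\N\times\mathcal{X}$ in $\Delta$ with $P=Q_A$. I will produce a set $Q'\subseteq \baire\times\mathcal{X}$ in $\Delta$ with $Q'_A=Q_A$. The natural idea is to pull $Q$ back along a map $r:\baire\to 2^\N$ that fixes $2^\N$ pointwise:
\[
Q'(f,x)\iff Q(r(f),x).
\]
Since $r(A)=A$, we get $Q'_A=Q_A$. The question is how good $r$ must be for $Q'$ to stay in $\Delta$.

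First, I rule out a continuous retraction. There is none, since $2^\N$ is not a continuous retract of $\baire$ fixing it in the needed sense: the identity on $2^\N$ would have to extend continuously to points such as $\sigma^\frown 2^\frown\ldots$. One could retract by sending $f(n)\mapsto \min(f(n),1)$. This map is recursive and continuous, and it fixes $2^\N$. So continuity is in fact not an obstruction, and the preimage of a $\Delta^0_1$ set under a recursive function is again $\Delta^0_1$. That would seem to prove the statement even for $\Delta^0_1$, which the proposition excludes. I therefore need to recheck how $\Delta^0_1$ sections are defined, and in particular whether $\Delta^0_1$ on $\baire\times\mathcal{X}$ means "clopen and effectively so".

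On reflection, $Q$ being $\Delta^0_1$ gives effectively open $Q$ and complement on $2^\N\times\mathcal{X}$, and pulling back along $r$ preserves that. So I will not rely on the $\Delta^0_1$ exclusion being explained by this step. Perhaps the exclusion concerns the other direction, for instance the paper's normal form using a c.e. rather than computable relation. Either way, the plan is the following:

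1. Define $r(f)(n)=\min(f(n),1)$. It is recursive, hence $\Sigma^0_1$-measurable in the effective sense, and $r\restriction 2^\N=\mathrm{id}$.

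2. Let $Q'=(r\times\mathrm{id})^{-1}[Q]$ and its complement $(r\times\mathrm{id})^{-1}[\neg Q]$.

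3. Invoke the standard closure of $\Sigma^0_\alpha$ and $\Sigma^1_n$ under recursive substitution (Moschovakis, 3E/7B) for both $Q$ and $\neg Q$. This gives $Q'\in\Delta$.

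4. Conclude $Q'_A=Q_A$.

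The main obstacle is step 3 at level $\Delta^0_1$. I expect subtleties there: in the paper's setup $\Sigma^0_1$ subsets of general $\mathcal{X}$ use c.e. codes, and the trouble should be located there rather than in the argument itself. For the classes in question (levels $\geq 2$ and projective), substitution closure is routine, so the proof goes through.
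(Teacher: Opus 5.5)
Your final argument is correct, and it takes a genuinely different route from the paper.

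\textbf{Comparison with the paper.} The paper extends $Q$ by ``false'' off the parameter set: $R(f,x)\iff f\in 2^\N \ \& \ Q(f,x)$. This requires $2^\N$ itself to lie in $\Delta$. Since $2^\N$ is $\Pi^0_1$ but not $\Sigma^0_1$ in $\baire$, that is exactly why $\Delta^0_1$ is excluded. The paper then treats $\Delta^0_1$ separately, only for effectively zero-dimensional $\mathcal{X}$, by a compactness and dovetailing argument.

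You instead pull $Q$ back along the computable retraction $r(f)(n)=\min(f(n),1)$, which needs only closure under total recursive substitution. This also makes precise what ``$Q(f,x)$'' means for $f\notin 2^\N$, a point the paper's formula leaves tacit. That closure holds at every level, including $\Delta^0_1$ on arbitrary spaces; the paper itself uses it in the proof of Proposition~\ref{prop:transferopen}. So your argument is strictly stronger: it gives $\mathcal{S}\Delta^0_1(2^\N;A)\restriction\mathcal{X}=\mathcal{S}\Delta^0_1(\baire;A)\restriction\mathcal{X}$ for every space $\mathcal{X}$, subsuming the paper's next proposition.

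What the paper's method buys in exchange is that it needs no retraction. Intersecting with the parameter set works for any $\Delta$ subset of $\baire$. By contrast, a nonempty $\Pi^0_1$ subset of $\baire$ with no computable member admits no computable retraction, since the image of a computable point would be a computable member.

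\textbf{Two parts of your write-up to clean up.} First, the opening assertion that $2^\N$ is not a continuous retract of $\baire$ is false; your own $r$ refutes it. Delete it rather than contradicting it a few lines later.

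Second, the closing worry about step~3 at $\Delta^0_1$ is unfounded. Suppose $Q(B,x)\iff(\exists\sigma,s)[\sigma\sqsubset B \ \& \ x\in N_s(\mathcal{X}) \ \& \ E(\sigma,s)]$ with $E$ c.e. Then
\[
Q(r(f),x)\iff(\exists\tau,s)[\tau\sqsubset f \ \& \ x\in N_s(\mathcal{X}) \ \& \ E(\bar r(\tau),s)],
\]
where $\bar r$ applies $\min(\cdot,1)$ coordinatewise to finite sequences, and the relation $E(\bar r(\tau),s)$ is again c.e.; likewise for $\neg Q$. The exclusion of $\Delta^0_1$ is an artifact of the paper's method, not an obstruction, so you can state the $\Delta^0_1$ case outright instead of hedging.
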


\begin{proof}
    We only have to show $\mathcal{S}\Delta(2^\N; A)\subseteq
    \mathcal{S}\Delta(\baire; A)$. 
    Since $2^\N$ is a $\Pi^0_1$ subset of $\baire$, it is also in $\Delta$.  Thus, if
    $P=Q_A$, where
    $Q\subseteq 2^\N\times \mathcal{X}$ is in $\Delta$, then we can
    define $R\subseteq \baire \times \mathcal{X}$ by  
    \[
    R(f, x) \iff f\in 2^\N \ \& \ Q(f, x). 
    \]
    $R$ is in $\Delta$ and $P=R_A$.  
\end{proof}

For $\Delta^0_1$, the classes $\mathcal{S}\Delta^0_1(\baire; A)$ 
    and $\mathcal{S}\Delta^0_1(2^\N; A)$ are also the same when
    we restrict to effectively zero-dimensional spaces.  A recursively presented Polish metric $\mathcal{X}$ space has an
    effective enumeration of basic neighborhoods, $N_s(\mathcal{X})$ for $s\in \N$.  If this collection
    is $\Delta^0_1$ uniformly in $s$, then we say
    $\mathcal{X}$ is \textbf{effectively zero-dimensional}.
    The spaces $\N, 2^\N, \baire$ and their products are all
    effectively zero-dimensional.  

\begin{proposition}
Let $\mathcal{X}$ be effectively zero-dimensional.
Then, for any $A\in 2^\N$, 
$\mathcal{S}\Delta^0_1(2^\N; A) \restriction \mathcal{X} = 
\mathcal{S}\Delta^0_1(\baire; A)\restriction \mathcal{X}$.
\end{proposition}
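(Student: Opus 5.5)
As in the preceding proposition, the inclusion $\mathcal{S}\Delta^0_1(\baire; A)\subseteq \mathcal{S}\Delta^0_1(2^\N; A)$ is automatic, since the inclusion map $2^\N\to\baire$ is computable. So I only need to show that every $A$-section of a $\Delta^0_1$ set $Q\subseteq 2^\N\times\mathcal{X}$ is the $A$-section of a $\Delta^0_1$ set $R\subseteq \baire\times\mathcal{X}$. The earlier trick $R(f,x)\iff f\in 2^\N\ \&\ Q(f,x)$ fails here, because $2^\N$ is only $\Pi^0_1$ in $\baire$, not $\Delta^0_1$. Instead I will use a computable retraction $r:\baire\to 2^\N$, defined by $r(f)(n)=\min(f(n),1)$. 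Then $r$ is the identity on $2^\N$, and I set
\[
R(f,x)\iff Q(r(f),x).
\]
This gives $R_A=Q_A$ for $A\in 2^\N$. It remains to see that $R$ is $\Delta^0_1$.

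Since $Q$ and $\neg Q$ are both $\Sigma^0_1$, it suffices that $\Sigma^0_1$ is closed under substitution of the computable function $r\times\mathrm{id}$. This is standard (Moschovakis 3D), but it can also be checked directly. Write the normal form
\[
Q(B,x)\iff(\exists s,t)[B\in[\sigma_s]\ \&\ x\in N_t(\mathcal{X})\ \&\ S(s,t)]
\]
with $S$ c.e. Then $r(f)\in[\sigma]$ holds iff for all $n<|\sigma|$ we have $\min(f(n),1)=\sigma(n)$. This is a clopen condition on $f$, and it is a union of basic neighborhoods $[\tau]$ with $|\tau|=|\sigma|$, except that infinitely many values of $\tau$ are possible. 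That is fine: the relation ``$\tau\in\N^{<\N}$ with $|\tau|=|\sigma_s|$ and $\min(\tau(n),1)=\sigma_s(n)$'' is computable in $(\tau,s)$. Hence $R$ is $\Sigma^0_1$, and by the same argument applied to $\neg Q$, so is $\neg R$.

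The same formula works for any $\mathcal{X}$, so the zero-dimensionality hypothesis is not needed for this construction. At most it enters in matching conventions about how $\Delta^0_1$ subsets of $\mathcal{X}$ are presented, and there is no real obstacle. The only care needed is to verify that the pullback preserves $\Sigma^0_1$ uniformly, which is the computation above. If I wanted to use zero-dimensionality explicitly, I could instead rewrite $Q$ as a computable union of clopen boxes $[\sigma]\times N_t(\mathcal{X})$, using that each $N_t(\mathcal{X})$ is $\Delta^0_1$ uniformly, and pull back each box. That route gives the same result.
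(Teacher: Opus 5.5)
Your proof is correct, and it takes a genuinely different and more economical route than the paper. The paper keeps $Q\subseteq 2^\N\times\mathcal{X}$ and defines $R$ by a decision procedure. For a given $x$ it dovetails through the upward-closed normal forms $Q_0,Q_1$ of $Q$ and $\neg Q$. It uses compactness of $2^\N$ to reach a length $k$ at which every binary string of length $k$ has been settled, and then decides $R(f,x)$ from $f\restriction k$, declaring it false when $f\restriction k$ is not a binary string. Effective zero-dimensionality is invoked precisely so that the test $x\in N_s(\mathcal{X})$ in this search is decidable.

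You avoid the search altogether by pulling $Q$ back along the computable retraction $r(f)(n)=\min(f(n),1)$. Both $R=(r\times\mathrm{id})^{-1}[Q]$ and $\neg R=(r\times\mathrm{id})^{-1}[\neg Q]$ are $\Sigma^0_1$ by closure under total computable substitution, and your normal-form check of this is fine. Then $R_A=Q_A$ because $r$ fixes every $A\in 2^\N$.

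What your approach buys is generality and uniformity. It needs neither compactness nor zero-dimensionality, so it proves the equality for every space $\mathcal{X}$. Moreover, every self-dual Kleene pointclass is closed under total computable substitutions, so the same retraction also proves the preceding proposition without the case split that relied on $2^\N$ being $\Pi^0_1$ in $\baire$. The paper's compactness search is the same mechanism it later uses to turn $\mathcal{S}\Delta^0_1(A)$-definitions into truth-table reductions, so it fits that narrative, but it is not needed for this statement.
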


\begin{proof}
    Assume $P=Q_A$ for some $Q\subseteq 2^\N\times \mathcal{X}$ in $\Delta^0_1$. Using the standard representation for $\Sigma^0_1$ sets, we can find 
$\Sigma^0_1$ sets $Q_0, Q_1\subseteq 2^{<\N}\times \N$ such that
\[
Q(B, x) \iff (\exists \sigma)(\exists s)[\sigma \sqsubset B
\ \& \ x\in N_s(\mathcal{X}) \ \& \ Q_0(\sigma, s)]
\]
\[
\neg Q(B, x) \iff (\exists \sigma)(\exists s)[\sigma \sqsubset B
\ \& \ x\in N_s(\mathcal{X}) \ \& \ Q_1(\sigma, s)]
\]
We may also assume $Q_0, Q_1$ are closed upwards in their
$2^{<\N}$ arguments, i.e., $Q_i(\sigma, s)$ and $\sigma\sqsubset \tau$ implies $Q_i(\tau, s)$.  

We define $R\subseteq \baire \times \mathcal{X}$ as follows.  Given a pair $(f, x)$, use dovetailing to start enumerating (computably in $x$) all
$\sigma\in 2^{<\N}$ for which you find an $s\in \N$
with $x\in N_s(\mathcal{X})$ and a witness to one
of $Q_i(\sigma, s)$, $i=0, 1$.  Note that checking
$x\in N_s(\mathcal{X})$ is $x$-computable because
$\mathcal{X}$ is effectively zero-dimensional.  
By compactness,
at some point you will have enumerated all
the $\sigma$ of a fixed length $k$.  As soon as this happens, if $f\restriction k\in 2^{<\N}$ and if $Q_0(f\restriction k, s)$ was the relation that was witnessed during the search, then set $R(f, x)$ to be
true.  Otherwise, $R(f, x)$ is false.    Clearly,
$R$ is $\Delta^0_1$ and $P=Q_A$.
\end{proof}

We will only be considering strongly relativizing
$\Delta^0_1$ subsets of effectively zero-dimensional
spaces.  Therefore, we can introduce the following
notational convention without any fear of confusion:
for any self-dual Kleene pointclass $\Delta$ and any 
$f\in \baire$ (even if it is an element
of $2^\N$), we use the notation
\[
\mathcal{S}\Delta(f):= \mathcal{S}\Delta(\baire; f),
\]
where in the case that $\Delta=\Delta^0_1$ it
is understood that we are restricting our attention
to subsets of effectively zero-dimensional spaces.

We record some other relevant observations.

\begin{proposition}\label{prop:recsurj}
    Let $\Delta$ be a self-dual Kleene pointclass, and let $\mathcal{X}$ and $\mathcal{Y}$ be spaces such that there is a computable surjection $F:\mathcal{X}\rightarrow \mathcal{Y}$ which 
    has a computable right inverse $G:\mathcal{Y}\rightarrow \mathcal{X}$
    (so that $F\circ G$ is the identity on $\mathcal{Y}$).  If $f\in\baire$ strongly relativizes $\Delta$ at $\mathcal{X}$, then $f$ also
    strongly relativizes $\Delta$ at $\mathcal{Y}$.
\end{proposition}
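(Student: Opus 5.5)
The plan is to transfer a $\Delta(f)$ subset of $\mathcal{Y}$ to $\mathcal{X}$ by pulling it back along $F$, apply the hypothesis at $\mathcal{X}$, and then push the resulting $\Delta$ set back to $\mathcal{Y}$ by substituting $G$. Throughout I use that every self-dual Kleene pointclass, relativized or not, is closed under substitution of computable functions: if $Q\subseteq \mathcal{W}\times\mathcal{Y}$ is in $\Delta$ and $H:\mathcal{X}\to\mathcal{Y}$ is computable, then $\{(w,x) : Q(w,H(x))\}$ is in $\Delta$. The same holds for $\Delta(f)$, since $\Gamma(f)$ and $\neg\Gamma(f)$ are each closed under computable substitution.

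Fix $P\subseteq \mathcal{Y}$ in $\Delta(f)$, and define
\[
P' := F^{-1}[P] = \{x\in\mathcal{X} : P(F(x))\}.
\]
By closure under computable substitution, $P'$ is in $\Delta(f)\restriction\mathcal{X}$. Since $f$ strongly relativizes $\Delta$ at $\mathcal{X}$, there is some $Q\subseteq \baire\times\mathcal{X}$ in $\Delta$ with $Q_f = P'$. Now define
\[
R\subseteq\baire\times\mathcal{Y},\qquad R(g,y) \iff Q(g, G(y)).
\]
The map $(g,y)\mapsto (g,G(y))$ is computable, so $R$ is in $\Delta$. Then
\[
R(f,y) \iff G(y)\in P' \iff P(F(G(y))) \iff P(y),
\]
using $F\circ G=\mathrm{id}_{\mathcal{Y}}$. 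Hence $P=R_f\in\mathcal{S}\Delta(f)\restriction\mathcal{Y}$.

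The only point needing care is closure under computable substitution for each class. For $\Sigma^0_\alpha$, $\Pi^0_\alpha$ and $\Sigma^1_n$ this is standard (Moschovakis 3E and 7B). For $\Delta^0_1$, the paper's convention restricts attention to effectively zero-dimensional spaces, so $\mathcal{X}$ and $\mathcal{Y}$ should be taken of that kind. Substitution preserves $\Sigma^0_1$ on any spaces, so nothing further is needed there. In fact only the substitution argument is used, and surjectivity of $F$ is just implied by the existence of $G$.
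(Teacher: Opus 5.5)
Your proposal is correct and matches the paper's proof: pull $P$ back along $F$ to get a $\Delta(f)$ subset of $\mathcal{X}$, apply the hypothesis to obtain a $\Delta$ set in $\baire\times\mathcal{X}$ whose $f$-section is $F^{-1}[P]$, and substitute $G$ in the second coordinate. The identity $F\circ G=\mathrm{id}_{\mathcal{Y}}$ then shows that the $f$-section of the resulting $\Delta$ set is exactly $P$.
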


\begin{proof}
    Suppose $f\in \baire$ strongly relativizes $\Delta$ at $\mathcal{X}$, and let
    $P\subseteq \mathcal{Y}$ be $\Delta(f)$. Define
    $Q:=\{x\in \mathcal{X} : P(F(x))\}$.  $Q$ is clearly a $\Delta(f)$ subset of $\mathcal{X}$,
    hence there is a $\Delta$ relation
    $Q^*\subseteq \baire\times \mathcal{X}$ such that
    $Q=Q^*_f$.  Now, define $P^*\subseteq \baire \times \mathcal{Y}$ by
    \[
    P^*(h, y) \iff Q^*(h, G(y)),
    \]
    which is in $\Delta$.  But then for any $y\in \mathcal{Y}$,
    \[
    P^*(f, y) \iff Q^*(f, G(y)) \iff Q(G(y))
    \iff P(F(G(y))) \iff P(y).
    \]
    Hence, $P=P^*_f$.  
\end{proof}

\begin{corollary} \label{cor:reciso}
Let $\Delta$ be a self-dual Kleene pointclass and
let $f\in \baire$.
\begin{enumerate}
 \item Let $\mathcal{X}, \mathcal{Y}$ be spaces and let $f\in \baire$.  If $\mathcal{X}$ and $\mathcal{Y}$ are computably
isomorphic, then $f$ strongly relativizes $\Delta$ at 
$\mathcal{X}$ if and only if $f$ strongly relativizes
$\Delta$ at $\mathcal{Y}$.
    \item If $f$ strongly relativizes $\Delta$
    at $\baire$, then $f$ strongly relativizes
    $\Delta$ at $\N$.  
\end{enumerate}
\end{corollary}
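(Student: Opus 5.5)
Both parts follow directly from Proposition \ref{prop:recsurj}, so the plan is simply to produce, in each case, a computable surjection that has a computable right inverse.

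For part (1), suppose $F:\mathcal{X}\rightarrow\mathcal{Y}$ is a computable isomorphism, i.e.\ a computable bijection whose inverse $F^{-1}:\mathcal{Y}\rightarrow\mathcal{X}$ is also computable. Then $F$ is a computable surjection and $G:=F^{-1}$ is a computable right inverse, since $F\circ G=\mathrm{id}_{\mathcal{Y}}$. Proposition \ref{prop:recsurj} therefore shows that if $f$ strongly relativizes $\Delta$ at $\mathcal{X}$, it does so at $\mathcal{Y}$. The converse direction is the same argument with the roles reversed: use $F^{-1}$ as the surjection and $F$ as its right inverse.

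For part (2), take $\mathcal{X}=\baire$ and $\mathcal{Y}=\N$. Define $F:\baire\rightarrow\N$ by $F(g)=g(0)$, and $G:\N\rightarrow\baire$ by letting $G(n)$ be the constant sequence with value $n$. Both maps are computable. $F$ is surjective because $F(G(n))=n$ for every $n$, which also shows $F\circ G=\mathrm{id}_{\N}$. Proposition \ref{prop:recsurj} then gives the claim.

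When $\Delta=\Delta^0_1$, the convention is to consider only effectively zero-dimensional spaces. This causes no difficulty: $\baire$ and $\N$ are both effectively zero-dimensional, and the proof of Proposition \ref{prop:recsurj} uses nothing beyond closure of $\Delta$ under computable substitution. So the only thing to check is that the maps above are computable in the sense of recursively presented spaces, and that is immediate here. I do not expect any real obstacle.
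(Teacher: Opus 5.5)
Your proposal is correct and is the same argument as the paper's: part (1) applies Proposition \ref{prop:recsurj} to the isomorphism and its inverse in each direction. Part (2) applies it to $g\mapsto g(0)$ with a computable right inverse; the paper uses $n\mapsto (n,0,0,\dots)$ rather than your constant sequence, which makes no difference.
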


\begin{proof} 
  (1) immediately follows from Proposition
\ref{prop:recsurj}.  
To prove (2),  just apply Proposition \ref{prop:recsurj} to  the computable surjection 
$\baire\rightarrow \N$, $g\mapsto g(0)$,
and its computable right inverse $n\mapsto (n, 0, 0, \dots)$.
\end{proof}

\section{Strongly relativizing $\Delta^1_1$}

\label{sec:hyp}

The main result of this section is that
the reals which strongly relativize $\Delta^1_1$ (at every
space $\mathcal{X}$) are exactly the hyperlow reals.
So, we begin by defining the hyperlow reals and
recalling some of their key properties.

For $f\in \baire$, a countable ordinal $\alpha$ is 
\textbf{$f$-computable} if there is a $f$-computable wellordering
on $\N$ which whose order type is $\alpha$.
The \textbf{Church-Kleene ordinal of $f$},
denoted $\omega_1^f$, is the least (countable) ordinal
which is not $f$-computable.  Note that 
$\omegack:=\omega_1^\emptyset$. 
$f\in \baire$ is \textbf{hyperlow} if $\omega_1^f=\omegack$.

Kleene's $\mathcal{O}$ is the set of ordinal notations for
computable ordinals.  For our purposes, the most relevant
fact is that $\mathcal{O}$ is a $\Pi^1_1$-complete set
of integers, in the sense that any $\Pi^1_1$ set
of integers can be computably reduced to $\mathcal{O}$.

\begin{theorem}[See \cite{chong2015}, Section 2.4]
    For $f\in 2^\N$, the following are equivalent.
    \begin{enumerate}[(i)]
    \item $f$ is hyperlow.
    \item $\mathcal{O}\notin \Delta^1_1(f)$
    \item For every $\Pi^1_1$ set $P\subseteq \baire$ with $f\in P$, there
    is a $\Delta^1_1$ set $Q\subseteq \baire$ such that
    $f\in Q\subseteq P$.
    \end{enumerate}
\end{theorem}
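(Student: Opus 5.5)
We will prove the cycle (i)$\Rightarrow$(ii)$\Rightarrow$(iii)$\Rightarrow$(i), or more conveniently (i)$\Leftrightarrow$(ii) and (ii)$\Leftrightarrow$(iii), using the standard facts about $\mathcal{O}$ and the Spector--Gandy machinery. Throughout we relativize: $\mathcal{O}^f$ is $\Pi^1_1(f)$-complete, and the $f$-computable ordinals are exactly the ranks $|e|$ of $e\in\mathcal{O}^f$.

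For (i)$\Leftrightarrow$(ii), we use the hyperarithmetic boundedness principle relativized to $f$. If $\mathcal{O}\in\Delta^1_1(f)$, then $\mathcal{O}$ is a $\Delta^1_1(f)$ subset of $\mathcal{O}^f$ (a computable injection of notations carries $\mathcal{O}$ into $\mathcal{O}^f$ rank-preservingly). By $\Sigma^1_1(f)$-boundedness its ranks are bounded below $\omega_1^f$. But these ranks are unbounded in $\omegack$, so $\omegack<\omega_1^f$. Conversely, suppose $\omegack<\omega_1^f$. Then there is an $f$-computable wellorder $W$ of type $\omegack$. Using $W$, we can decide $e\in\mathcal{O}$ as follows: $e\in\mathcal{O}$ iff $e$ codes a linear order (computable tree) admitting an order-preserving embedding into $W$ (a comparison map). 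This is $\Sigma^1_1(f)$, and together with the $\Pi^1_1$ definition of $\mathcal{O}$ it gives $\mathcal{O}\in\Delta^1_1(f)$. One can also phrase this via the $H$-set along $W$: $H_W\le_h f$ computes $\mathcal{O}$ arithmetically.

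For (iii)$\Rightarrow$(ii), suppose $\mathcal{O}\in\Delta^1_1(f)$, say $\mathcal{O}=\{n:\phi(f,n)\}=\{n:\psi(f,n)\}$ with $\phi$ $\Sigma^1_1$ and $\psi$ $\Pi^1_1$. The set
\[P=\{g:(\forall n)[\phi(g,n)\to n\in\mathcal{O}]\ \&\ (\forall n)[n\in\mathcal{O}\to\psi(g,n)]\}\]
is $\Pi^1_1$ and contains $f$. If $f\in Q\subseteq P$ with $Q$ in $\Delta^1_1$, then every $g\in Q$ satisfies $\{n:\phi(g,n)\}\subseteq\mathcal{O}\subseteq\{n:\psi(g,n)\}$. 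It follows that $\mathcal{O}=\{n:(\exists g)[g\in Q\ \&\ \psi(g,n)]\}$, and also $\mathcal{O}=\{n : (\forall g\in Q)\phi(g,n)\}$. We would like one of these to be $\Sigma^1_1$. We choose the phrasing with care: take $P$ to also require $\phi\leftrightarrow\psi$ pointwise. Then $n\in\mathcal{O}\iff(\exists g\in Q)\phi(g,n)$, which is $\Sigma^1_1$, and this contradicts the fact that $\mathcal{O}$ is not $\Sigma^1_1$.

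For (ii)$\Rightarrow$(iii), which is the main obstacle, we use the Gandy basis/Spector--Gandy style argument. Write the $\Pi^1_1$ set as $P=\{g:\text{tree }T(g)\text{ wellfounded}\}$ with rank function $g\mapsto|T(g)|$. Since $f$ is hyperlow, $|T(f)|<\omega_1^f=\omegack$, so $|T(f)|<\xi$ for some computable $\xi$. Then $Q=\{g:|T(g)|\le\xi\}$ is $\Delta^1_1$, because rank at most a fixed computable ordinal is a $\Delta^1_1$ condition (existence, or equivalently uniqueness, of an order-preserving map into a computable copy of $\xi$ is $\Delta^1_1$ by the effective boundedness and comparison arguments). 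Moreover $f\in Q\subseteq P$. This direction actually uses (i), so we route the cycle as (ii)$\Rightarrow$(i)$\Rightarrow$(iii)$\Rightarrow$(ii). The delicate point is verifying that $\{g:|T(g)|\le\xi\}$ is $\Delta^1_1$ uniformly, which we handle by transfinite recursion on $\xi$ along a computable wellorder.
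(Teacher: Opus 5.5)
\textbf{The gap is in (iii)$\Rightarrow$(ii): the set $P$ you apply (iii) to is not $\Pi^1_1$.}

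\begin{itemize}
\item The clause $(\forall n)[n\in\mathcal{O}\to\psi(g,n)]$ has matrix $n\notin\mathcal{O}\vee\psi(g,n)$. This is a disjunction of a $\Sigma^1_1$ condition and a $\Pi^1_1$ condition, which is not $\Pi^1_1$ in general.
\item Your ``careful'' extra requirement $\phi\leftrightarrow\psi$ only makes things worse. It includes $(\forall n)[\psi(g,n)\to\phi(g,n)]$, which is $\Sigma^1_1$.
\item The two intermediate identities are also wrong. From $\{n:\phi(g,n)\}\subseteq\mathcal{O}\subseteq\{n:\psi(g,n)\}$ for $g\in Q$ you get neither $\{n:(\exists g\in Q)\psi(g,n)\}\subseteq\mathcal{O}$ nor $\mathcal{O}\subseteq\{n:(\forall g\in Q)\phi(g,n)\}$.
\end{itemize}

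The fix is to keep only the first clause. Set $P:=\{g:(\forall n)[\neg\phi(g,n)\vee n\in\mathcal{O}]\}$; it is $\Pi^1_1$ and contains $f$. If $f\in Q\subseteq P$ with $Q$ in $\Delta^1_1$, then $n\in\mathcal{O}\iff(\exists g)[g\in Q\ \&\ \phi(g,n)]$. Here $g=f$ witnesses $\Rightarrow$, and $Q\subseteq P$ gives $\Leftarrow$. This would make $\mathcal{O}$ $\Sigma^1_1$, a contradiction. The argument uses only $\mathcal{O}\in\Sigma^1_1(f)$, so $\psi$ is never needed. Once repaired, this is a clean direct argument. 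It replaces the common route of applying (iii) to the $\Pi^1_1$ set $\{g:\omega_1^g>\omegack\}$ and then using the Gandy basis theorem to find a hyperlow member of the resulting nonempty $\Delta^1_1$ set.

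The rest of your cycle (ii)$\Rightarrow$(i)$\Rightarrow$(iii) is the standard argument and is fine in outline, with two imprecisions.
\begin{itemize}
\item In (ii)$\Rightarrow$(i), members of $\mathcal{O}$ are notations, not codes of linear orders. You should first reduce $\mathcal{O}$ computably, via some $r$, to $\{i: T_i \text{ is a wellfounded computable tree}\}$. Then $e\in\mathcal{O}$ iff $T_{r(e)}$ admits a rank map into $W$, which is $\Sigma^1_1(f)$.
\item In (i)$\Rightarrow$(iii), the $\Pi^1_1$ half of ``$T(g)$ is wellfounded of rank $\le\xi$'' is not a ``uniqueness'' statement. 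It can be stated as: $T(g)$ is wellfounded and a fixed computable tree of rank $\xi+1$ admits no strict-extension-preserving map into $T(g)$. Alternatively, you can get it from your effective transfinite recursion along a computable copy of $\xi$.
\end{itemize}
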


In addition to these basic facts about hyperlows,
the proof of the main result of this section will
require two other standard pieces of machinery.
First, we recall that $\Pi^1_1$ has the \textbf{reduction
property} (see \cite[4B.10]{moschovakis2009}).  This means that if $P, Q\subseteq \mathcal{X}$
are $\Pi^1_1$, then there exists $\Pi^1_1$ sets $P^*\subseteq P$
and $Q^*\subseteq Q$ such that $P^*, Q^*$ are disjoint and
their union is $P\cup Q$.  We say that the pair $P^*, Q^*$
\textbf{reduces} the pair $P, Q$.  

We will also require a \textbf{good coding} 
for $\Delta^1_1$ subsets of $\mathcal{X}$.
That is to say, a $\Pi^1_1$ set
$D\subseteq \N$ and
sets $Q^\Pi$, $Q^\Sigma\subseteq \N\times \mathcal{X}$ which are $\Pi^1_1$ and $\Sigma^1_1$, respectively,
and satisfy 
\begin{enumerate}[(1)]
\item $e\in D\implies (\forall x\in \mathcal{X})[Q^\Pi(e, x) \iff Q^\Sigma(e, x)]$.
\item For every $\Delta^1_1$ subset $P\subseteq \mathcal{X}$,
there is $e\in D$ such that $P=Q^\Pi_e=Q^\Sigma_e$.  
\end{enumerate}
The construction of such a coding uses the reduction property and is standard in the literature; for example, see \cite[Section 3.2]{hkl}.

\begin{theorem}
\label{thm:hyp}
For $f\in \baire$, the following are equivalent:
\begin{enumerate}[(i)]
\item $f$ is hyperlow; 
\item $f$ strongly relativizes $\Delta^1_1$ (at every space $\mathcal{X}$);
\item $f$ strongly relativizes $\Delta^1_1$ at $\N$. 
\end{enumerate}
\end{theorem}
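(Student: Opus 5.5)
We will prove (i)$\Rightarrow$(ii)$\Rightarrow$(iii)$\Rightarrow$(i). The step (ii)$\Rightarrow$(iii) is immediate from the definition, since $\N$ is a space.

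\textbf{(iii)$\Rightarrow$(i).} We argue by contraposition. Suppose $f$ is not hyperlow. By the cited theorem (which is stated for $2^\N$, but relativizes to any $f\in\baire$ by replacing $f$ with a Turing-equivalent element of $2^\N$), $\mathcal{O}\in\Delta^1_1(f)$. If $f$ strongly relativized $\Delta^1_1$ at $\N$, there would be a $\Delta^1_1$ set $Q\subseteq\baire\times\N$ with $\mathcal{O}=Q_f$. Now consider the $\Sigma^1_1$ set $S$ of all $g$ such that $Q_g\subseteq\mathcal{O}$ fails. We would like to conclude that $\mathcal{O}$ is $\Sigma^1_1$, which is a contradiction. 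A direct route is to use a $\Pi^1_1$ singleton-type definition of $f$, but this is not available in general.

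Instead we work with the set
\[
T=\{g : Q_g\cap \N\text{ is a set of notations forming an initial segment that is wellfounded}\}.
\]
This is not obviously effective, so we use a cleaner argument. Consider $R=\{g\in\baire : Q_g\subseteq \mathcal{O}\}$. This set is $\Pi^1_1$ and contains $f$. For every $g\in R$, the set $Q_g$ is a $\Delta^1_1(g)$ subset of $\mathcal{O}$. If we can find a $\Delta^1_1$ path through this with $Q_g=\mathcal{O}$, we are done; the issue is that $Q_g$ might be a proper subset. To handle this, use boundedness: the $\Sigma^1_1$ set $\bigcup_{g\in R'}Q_g$ over a suitable $\Sigma^1_1$ subclass $R'$ containing $f$ would be a $\Sigma^1_1$ subset of $\mathcal{O}$ containing $\mathcal{O}$, hence bounded below some computable ordinal, which is a contradiction.

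To get $R'$, first reduce to the case $f\in 2^\N$. Then note that "$Q_g$ is closed under the $\Pi^1_1$ inductive definition of $\mathcal{O}$" is an arithmetical-in-$Q_g$ condition on $g$, hence $\Delta^1_1$ in $g$. The set $R'$ of $g$ such that $Q_g$ is closed under the clauses defining $\mathcal{O}$ is therefore $\Delta^1_1$ and contains $f$. Every such $Q_g$ includes $\mathcal{O}$, so $\bigcap_{g\in R'}Q_g\supseteq\mathcal{O}$; but this is $\Pi^1_1$, which gives no contradiction. Combining the two conditions instead, the set $R\cap R'$ is $\Pi^1_1$ and contains $f$, and for $g$ in it, $Q_g=\mathcal{O}$. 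Then $\mathcal{O}=\{n:\exists g\,(g\in R\cap R' \wedge Q(g,n))\}$. This is where I expect the main obstacle: the expression is $\Sigma^1_2$-looking. I would try to fix it via Gandy basis or Kleene's basis theorem. A nonempty $\Sigma^1_1$ set has a member $g$ with $\omega_1^g=\omegack$, but $R\cap R'$ is $\Pi^1_1$. So instead, note that if $\omega_1^g=\omegack$ then $\mathcal{O}\notin\Delta^1_1(g)$; hence $R\cap R'$ contains no hyperlow $g$. Writing $R\cap R'=\{g: \text{no } g\text{-hyperarithmetic... }\}$, I would aim to show the complement condition "$Q_g\not\subseteq\mathcal{O}$" can be replaced by a $\Sigma^1_1$ approximation, using that $Q_g\cap\mathcal{O}$ closed under the $\mathcal{O}$-clauses with the ordinal ranks witnessed $g$-hyperarithmetically. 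This yields a nonempty $\Sigma^1_1$ set of such $g$, and Gandy basis gives a hyperlow member, which is a contradiction.

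\textbf{(i)$\Rightarrow$(ii).} Let $P\subseteq\mathcal{X}$ be $\Delta^1_1(f)$. Apply the good coding relativized uniformly: there are a $\Pi^1_1$ set $D\subseteq\baire\times\N$ and $Q^\Pi,Q^\Sigma$ such that $P=Q^\Pi_{f,e}$ for some $e$ with $(f,e)\in D$. The set $D_e=\{g:(g,e)\in D\}$ is $\Pi^1_1$ and contains $f$. By (iii) of the cited theorem, there is a $\Delta^1_1$ set $B$ with $f\in B\subseteq D_e$. Then $Q^*(g,x)\iff g\in B\wedge Q^\Pi(g,e,x)$ is $\Pi^1_1$, and it is also $\Sigma^1_1$ via $g\in B\wedge Q^\Sigma(g,e,x)$, by property (1) on $B$. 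Hence $Q^*$ is $\Delta^1_1$, and $Q^*_f=P$.
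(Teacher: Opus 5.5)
Your (i)$\Rightarrow$(ii) is correct and is essentially the paper's argument. The paper reduces the pair $P_0,P_1$ directly and uses the $\Pi^1_1$ set $\{g:(\forall x)[P^*_0(g,x)\vee P^*_1(g,x)]\}$, which plays exactly the role of your $D_e$ from a uniformly relativized good coding. (ii)$\Rightarrow$(iii) is trivial. The problem is (iii)$\Rightarrow$(i): there is a genuine gap, and your strategy cannot be completed.

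You try to derive a contradiction from the single fact that $\mathcal{O}=Q_f$ for some $\Delta^1_1$ set $Q\subseteq\baire\times\N$, with $f$ not hyperlow. These two facts are consistent. Let $f$ be (the characteristic function of) $\mathcal{O}$; it is not hyperlow because $\mathcal{O}\in\Delta^1_1(\mathcal{O})$. Let $Q(g,n)\iff g(n)=1$, a clopen computable relation with $Q_f=\mathcal{O}$. More generally, if $\mathcal{O}=\Phi_e^f$, then $\{(g,n):\Phi_e^g(n)\downarrow=1\}$ is $\Sigma^0_1\subseteq\Delta^1_1$ and has $f$-section $\mathcal{O}$.

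In this example every $g$ with $Q_g=\mathcal{O}$ computes $\mathcal{O}$, so by the Gandy basis theorem there is no nonempty $\Sigma^1_1$ set of such $g$. Hence the \emph{$\Sigma^1_1$ approximation} your last step relies on does not exist. Your earlier attempts, such as a $\Sigma^1_1$ class $R'\subseteq R$ containing $f$, fail for the same reason. Any argument that applies strong relativization only to the one set $\mathcal{O}$ is doomed.

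The missing idea is a diagonalization, in which $\mathcal{O}$ serves only to make the diagonal set $\Delta^1_1(f)$. Fix a good coding $D,Q^\Pi,Q^\Sigma$ of the $\Delta^1_1$ subsets of $\baire\times\N$. Since $D$ is $\Pi^1_1$, some $h\leq_T\mathcal{O}$ enumerates $D$, so $h\in\Delta^1_1(f)$. The set $B=\{n:\neg Q^\Pi(h(n),f,n)\}$ is $\Sigma^1_1(f)$. It is also $\Pi^1_1(f)$ via $\neg Q^\Sigma$, since each $h(n)\in D$. If $B=\{n:Q^\Pi(e,f,n)\}$ for some $e\in D$, then choosing $n$ with $h(n)=e$ gives $Q^\Pi(e,f,n)\iff\neg Q^\Pi(e,f,n)$. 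So the witness to the failure of strong relativization at $\N$ is this diagonal set $B$, not $\mathcal{O}$.
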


\begin{proof}
(i)$\Rightarrow$(ii):
Suppose $f$ is hyperlow and
let $P\subseteq\ccX$ be $\Delta^1_1(f)$.
Pick $\Pi^1_1$ sets $P_0, P_1\subseteq \baire\times \ccX$ such that
\[
P(x) \iff P_0(f, x) \iff \neg P_1(f, x)
\]
for all $x\in \mathcal{X}$.
Let $P_0^*$ and $P_1^*$ be $\Pi^1_1$ subsets of
$\baire\times \ccX$ which reduce $P_0$ and $P_1$.
Note that, since $P_{0f}$ and $P_{1f}$ are complements of each other, we have $P=P_{0f} = P_{0f}^*$ and $P=\neg P_{1f}= \neg P_{1f}^*$.
Now, define the set
\[
R:=\{g\in \baire : (\forall x)[P_0^*(g, x) \vee P_1^*(g, x)]\}.
\]
$R$ is clearly $\Pi^1_1$ and $f\in R$.  Note
that for $g\in R$, $P^*_{0g}$ and $P^*_{1g}$ are complements.
Since $f$ is hyperlow,
there is a $\Delta^1_1$ set $S\subseteq \baire$ such that 
$f\in S\subseteq R$.  Now define $Q\subseteq \baire\times \ccX$ by
\[
Q(g, x) \iff g\in S \ \& \  P_0^*(g, x).
\]
$Q$ is clearly $\Pi^1_1$ but it is also $\Sigma^1_1$ since it satisfies
the equivalence
\[
Q(g, x) \iff g\in S \ \& \  \neg P_1^*(g, x).
\]
Finally, it is easy to see that $P=Q_f$, hence $P$ is in $\mathcal{S}\Delta^1_1(f)$.

(ii)$\Rightarrow$(iii) is trivial.

(iii)$\Rightarrow$(i): Suppose $f$ is not hyperlow.  
We show that there exists $B\subseteq \N$
such that $B\in \Delta^1_1(f)$ but $B\notin \mathcal{S}\Delta^1_1(f)$.

Let $D\subseteq\N$ and $Q^\Pi, Q^\Sigma\subseteq \N\times\baire\times\N$ be a good coding
of the $\Delta^1_1$ subsets of $\baire\times \Nats$.  
Since $D\subseteq \Nats$ is $\Pi^1_1$, 
there is a $h\leq_T\mathcal{O}$
which enumerates all the elements of $D$.  
Since $f$ is not hyperlow, $\mathcal{O}\in \Delta^1_1(f)$ and hence $h\in \Delta^1_1(f)$.
Define $B\subseteq \Nats$ by
\[
n\in B\iff \neg Q^\Pi(h(n), f, n).
\]
This definition is clearly $\Sigma^1_1(f)$, but also $B$ is $\Pi^1_1(f)$ 
since $h(n)\in D$ implies that we also have the equivalence
\[
n\in B \iff \neg Q^\Sigma(h(n), f, n).
\]
Suppose by contradiction that $B$ is $\mathcal{S}\Delta^1_1(f)$.  Then, 
there is $e\in D$ such that 
\[
n\in B \iff Q^\Pi(e, f, n).
\]
But for $n$ with $h(n)=e$, we obtain 
\[
Q^\Pi(e, f, n) \iff n\in B \iff \neg Q^\Pi(h(n), f, n) \iff \neg Q^\Pi(e, f, n),
\]
a contradiction.
\end{proof}

Note that there
are continuum-many hyperlow reals, so we have found continuum-many reals which strongly relativize $\Delta^1_1$. Also, we emphasize
that Theorem \ref{thm:hyp} (ii) is true for any 
recursively presented Polish space $\mathcal{X}$, not
just the effectively zero-dimensional ones. 

The next result is an application of Theorem \ref{thm:hyp}, which we will
use in the next section to point out an interesting connection
between hyperlow reals and computably dominated reals.

\begin{theorem}\label{thm:totalhyp}
For $f\in \baire$, the following are equivalent:
\begin{enumerate}[(i)]
\item $f$ is hyperlow;
\item For every $g\in \baire$, $g\in \Delta^1_1(f)$ if and
only if there is a $\Delta^1_1$ total function $F:\baire \rightarrow \baire$
such that $F(f)=g$. 
\end{enumerate}
\end{theorem}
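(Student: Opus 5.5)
The plan is to prove the two directions separately, using Theorem \ref{thm:hyp} in both. First, the easy half of (ii) needs no hypothesis on $f$. If $F:\baire\to\baire$ is total and $\Delta^1_1$, meaning its graph is $\Delta^1_1$, and $F(f)=g$, then
\[
g(n)=m \iff (\exists h)[F(f)=h \ \& \ h(n)=m] \iff (\forall h)[F(f)=h \Rightarrow h(n)=m].
\]
This exhibits the graph of $g$ as both $\Sigma^1_1(f)$ and $\Pi^1_1(f)$, so $g\in\Delta^1_1(f)$. So the content of (ii) is the forward implication: every $g\in\Delta^1_1(f)$ equals $F(f)$ for some total $\Delta^1_1$ function $F$.

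For (i)$\Rightarrow$(ii), let $f$ be hyperlow and $g\in\Delta^1_1(f)$. Consider $P:=\{(n,m) : g(n)=m\}\subseteq\N\times\N$, which is $\Delta^1_1(f)$. By Theorem \ref{thm:hyp} (ii) at the space $\N\times\N$, there is a $\Delta^1_1$ set $Q\subseteq\baire\times\N\times\N$ with $Q_f=P$. The section $Q_h$ need not be the graph of a total function for other $h$, so I will patch it. Define
\[
S:=\{h : (\forall n)(\exists! m)\,Q(h,n,m)\},
\]
which is $\Delta^1_1$. The quantifier $\exists!m$ ranges over $\N$, so $S$ is obtained from $Q$ by number quantifiers only. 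Now set $F(h)(n)=m$ iff either $h\in S$ and $Q(h,n,m)$, or $h\notin S$ and $m=0$. Then $F$ is total, its graph is $\Delta^1_1$, and $F(f)=g$ because $f\in S$.

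For (ii)$\Rightarrow$(i) I argue by contraposition. Suppose $f$ is not hyperlow. Then $\mathcal{O}\in\Delta^1_1(f)$, so I take $g=\chi_{\mathcal O}$ (or, alternatively, $g=\chi_B$ for the $B$ built in the proof of Theorem \ref{thm:hyp}). If $g=F(f)$ for a total $\Delta^1_1$ function $F$, then $Q(h,n):\iff F(h)(n)=1$ is $\Delta^1_1$ and $Q_f=\{n : g(n)=1\}$. Hence every $\Delta^1_1(f)$ subset of $\N$, encoded by its characteristic function, would lie in $\mathcal{S}\Delta^1_1(f)$. That is condition (iii) of Theorem \ref{thm:hyp}, which would force $f$ to be hyperlow, a contradiction. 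I expect no real obstacle. The only points needing care are checking that $S$ and the patched graph of $F$ stay $\Delta^1_1$, which holds since only number quantifiers are added, and verifying that totality of $F$ makes $F(h)(n)=1$ a $\Delta^1_1$ relation in $(h,n)$.
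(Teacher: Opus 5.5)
Your proof is correct and essentially the paper's: (i)$\Rightarrow$(ii) applies Theorem~\ref{thm:hyp}(ii) to the graph of $g$ and then patches the resulting $\Delta^1_1$ relation into a total function, and (ii)$\Rightarrow$(i) rests, as in the paper, on (ii) placing every $\Delta^1_1(f)$ subset of $\N$ in $\mathcal{S}\Delta^1_1(f)$, so that Theorem~\ref{thm:hyp}(iii)$\Rightarrow$(i) applies. You patch globally off the $\Delta^1_1$ set $S$, while the paper patches coordinatewise by taking the least witness with default $0$; you also check the easy half of (ii), which the paper leaves implicit. The contrapositive framing with a particular $g$ is superfluous, and $g=\chi_{\mathcal O}$ alone would give no contradiction, since the identity map sends the non-hyperlow $f=\chi_{\mathcal O}$ to $\chi_{\mathcal O}$; the choice $g=\chi_B$ would work.
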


\begin{proof}
Suppose $f$ is hyperlow and $g\in \Delta^1_1(f)$.
By Theorem \ref{thm:hyp}, $\Delta^1_1(f)=\mathcal{S}\Delta^1_1(f)$, so there
is a $\Delta^1_1$ relation $P\subseteq \baire \times \Nats^2$ such that
\[
g(n)=k \iff P(f, n, k).
\]
Now, we define $F:\baire\rightarrow \baire$ by
\begin{multline*}
F(h)(n)=k \iff [(\forall \ell)\neg P(h, n, \ell) \ \& \ k=0] \\
\vee 
[P(h, n, k) \ \& \ (\forall \ell<k)\neg P(h, n, k)].
\end{multline*}
Easily, $F$ is $\Delta^1_1$ and $F(f)=g$.

Now assume (ii) holds and let $B\subseteq \N$ be $\Delta^1_1(f)$.
Let $F:\baire \rightarrow \baire$ be a $\Delta^1_1$ total 
function
with $F(f)= B$. Then, we easily have the equivalence
\[
n\in B \iff F(f)(n)=1,
\]
which shows that $B$ is $\mathcal{S}\Delta^1_1(f)$.
\end{proof}

We close with some remarks on strongly relativizing 
the higher analytical
self-dual pointclasses $\Delta^1_n$, $2\leq n<\omega$.
It is easy to show that $\Delta^1_n$-singletons 
strongly relativize $\Delta^1_n$, but we would like
to know if there are more interesting examples 
(and to know whether there
are uncountably many such).  
The answers to these questions may be independent of
\textsf{ZFC}.  
Projective Determinacy implies that every $\Pi^1_{2n+1}$
has the reduction property.  This property is enough
to construct a good coding for $\Delta^1_{2n+1}$ subsets
of any space $\mathcal{X}$.  These were the only
tools we used in our proof of Theorem \ref{thm:hyp},
so the following immediately follows from that proof.

\begin{theorem} \label{thm:analytical}
    Assume Projective Determinacy and let $f\in \baire$
    \begin{enumerate}[(i)]
    \item Suppose $f\in \baire$ has the following property:
    for every $\Pi^1_{2n+1}$ set $P\subseteq \baire$ with
    $f\in P$, there is a $\Delta^1_{2n+1}$ set $Q$ with
    $f\in Q\subseteq P$.  Then, $f$
    strongly relativizes $\Delta^1_{2n+1}$ (at every 
    space $\mathcal{X}$).
    \item If $f$ strongly relativizes $\Delta^1_{2n+1}$ at $\N$, then $f$ does not compute the
    the $\Pi^1_{2n+1}$-complete set of integers.
    \end{enumerate}
\end{theorem}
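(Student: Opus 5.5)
The proof will follow the proof of Theorem \ref{thm:hyp} line by line, replacing $\Pi^1_1$, $\Sigma^1_1$, $\Delta^1_1$ by $\Pi^1_{2n+1}$, $\Sigma^1_{2n+1}$, $\Delta^1_{2n+1}$. Under Projective Determinacy, $\Pi^1_{2n+1}$ has the reduction property, and this also yields a good coding $D, Q^\Pi, Q^\Sigma$ for the $\Delta^1_{2n+1}$ subsets of $\baire\times\N$. Here $D\subseteq\N$ is $\Pi^1_{2n+1}$, $Q^\Pi$ is $\Pi^1_{2n+1}$, $Q^\Sigma$ is $\Sigma^1_{2n+1}$, and they agree on codes in $D$.

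For (i), let $P\subseteq\mathcal{X}$ be $\Delta^1_{2n+1}(f)$. Choose $\Pi^1_{2n+1}$ sets $P_0,P_1\subseteq\baire\times\mathcal{X}$ with $P=P_{0f}=\neg P_{1f}$, and reduce them to $P_0^*,P_1^*$. Put $R=\{g:(\forall x)[P_0^*(g,x)\vee P_1^*(g,x)]\}$. This set is $\Pi^1_{2n+1}$, because $\Pi^1_{2n+1}$ is closed under universal quantification over spaces, and it contains $f$. The hypothesis on $f$ then gives a $\Delta^1_{2n+1}$ set $S$ with $f\in S\subseteq R$. Define $Q(g,x)\iff g\in S\ \&\ P_0^*(g,x)$. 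Since $P_0^*,P_1^*$ are disjoint with union everything on $S$, we also have $Q(g,x)\iff g\in S\ \&\ \neg P_1^*(g,x)$, so $Q$ is $\Delta^1_{2n+1}$. Finally $Q_f=P$. The only facts used are closure properties that hold at every level and the reduction property.

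For (ii), argue by contrapositive. Suppose $f$ computes the $\Pi^1_{2n+1}$-complete set $C\subseteq\N$. Since $D$ is $\Pi^1_{2n+1}$, it is many-one reducible to $C$. Assuming $D\neq\emptyset$, there is then an $h\leq_T C\leq_T f$ enumerating $D$; for example, search through $n$ with $n\in D$ and repeat a fixed element when needed. So $h$ is $f$-computable, and in particular $h\in\Delta^1_{2n+1}(f)$. Define $n\in B\iff\neg Q^\Pi(h(n),f,n)$. This is $\Sigma^1_{2n+1}(f)$. Because $h(n)\in D$, it is also equivalent to $\neg Q^\Sigma(h(n),f,n)$, which is $\Pi^1_{2n+1}(f)$; here the substitution of the $f$-computable $h$ preserves the classes. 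If $B$ were in $\mathcal{S}\Delta^1_{2n+1}(f)$, then $B=Q^\Pi_{e,f}$ for some $e\in D$. Taking $n$ with $h(n)=e$ gives the diagonal contradiction. Hence $f$ does not strongly relativize $\Delta^1_{2n+1}$ at $\N$.

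The main point to check is the existence of the good coding at level $2n+1$. I would build it as in the $\Delta^1_1$ case. Take a universal $\Pi^1_{2n+1}$ set $U$. For each pair $e=\langle a,b\rangle$, reduce the pair $U_a$, $U_b$, uniformly in $e$, to obtain $\Pi^1_{2n+1}$ sets $U^*_{a}$, $U^*_{b}$. Let $Q^\Pi_e=U^*_a$ and $Q^\Sigma_e=\neg U^*_b$. Let $D$ be the set of $e$ with $(\forall z)[U^*_a(z)\vee U^*_b(z)]$, which is $\Pi^1_{2n+1}$. This uses only the reduction property supplied by Projective Determinacy. Every other step is a formal consequence of the proof of Theorem \ref{thm:hyp}.
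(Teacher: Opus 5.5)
Your proposal is correct and is essentially the paper's argument: the paper notes that under Projective Determinacy $\Pi^1_{2n+1}$ has the reduction property, which gives a good coding of the $\Delta^1_{2n+1}$ sets, and that these were the only ingredients in the proof of Theorem~\ref{thm:hyp}; you carry out exactly that transfer, and also spell out the standard construction of the good coding. One notational fix there: the reduced sets depend on the whole pair $e=\langle a,b\rangle$, since they are the $e$-sections of a reduction of the pair $U((e)_0,z)$, $U((e)_1,z)$ taken as subsets of $\N\times(\baire\times\N)$, so writing them as $U^*_a, U^*_b$ wrongly suggests they depend only on $a$ or only on $b$.
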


In the case of $\Delta^1_1$, the hypotheses on $f\in \baire$
in (i) and (ii) of Theorem \ref{thm:analytical} are
both equivalent to being hyperlow.  For the higher
analytical pointclasses, we ask the following question.

\begin{question}
    Is there a nice characterization (perhaps under Projective Determinacy) of the reals
    which strongly relativize $\Delta^1_{n}$ for $n\geq 1$?  Are there uncountably many such reals?
\end{question}

\section{Strongly relativizing $\Delta^0_1$}

\label{sec:recursiveatn}

The property of strongly relativizing $\Delta^0_1$ at $\N$
is closely connected with computable domination of functions.
A real $f\in \baire$ is \textbf{computably dominated}
if for every $g\in \baire$ with $g\leq_T f$, there is a computable $h\in \baire$ which dominates $g$ pointwise, i.e., such that $g(n)\leq h(n)$ for all $n\in \N$. Most of the results in this section will focus
on $A\in 2^\N$ which are computably dominated.

Martin and Miller established the key facts about
computably dominated reals, of which we will make use
of the following.

\begin{theorem}[\cite{miller1968}]\label{martinmiller}
\begin{enumerate}[(i)]
\item There are
continuum-many $A\in 2^\N$ which are computably dominated.  Moreover, this result relativizes
to any $B\in 2^\N$, i.e., there are continuum-many reals
$A>_T B$
which are $B$-computably dominated in the sense that
every $A$-computable function from $\N$ to $\N$ is dominated
pointwise by some $B$-computable function.
\item If $A\in 2^\N$ is a noncomputable real which is Turing comparable with $\emptyset'$,
then $A$ is not computably dominated.  
\end{enumerate}
\end{theorem}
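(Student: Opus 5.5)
Theorem \ref{martinmiller} is cited from \cite{miller1968}, but here is how I would prove it. For (i) I would use forcing with computable perfect trees, in the style of Spector and Sacks. Build a descending sequence $T_0\supseteq T_1\supseteq\cdots$ of computable perfect binary trees, starting with $T_0=2^{<\N}$. At stage $e$, handle the Turing functional $\Phi_e$ as follows.
\begin{itemize}
\item If some $\sigma\in T_e$ and some $n$ have the property that no extension of $\sigma$ in $T_e$ makes $\Phi_e^\tau(n)$ converge, pass to the full subtree of $T_e$ above $\sigma$. This subtree is computable, and on it $\Phi_e^A$ is partial.
\item Otherwise, every node has extensions that converge at every argument. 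Then I would build a computable perfect subtree $T'\subseteq T_e$ whose $n$-th splitting level consists of nodes $\tau$ with $\Phi_e^\tau(k)\downarrow$ for all $k\le n$. The function $h(n)=\max\{\Phi_e^\tau(n):\tau \text{ on the } n\text{-th splitting level of } T'\}$ is computable, since the level is finite. It dominates $\Phi_e^A$ for every path $A$ through $T'$.
\end{itemize}
Any $A\in\bigcap_e[T_e]$ is then computably dominated.

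To get continuum many such $A$, at each stage I would split the current tree into its two disjoint subtrees above the first splitting node and continue independently on both. This yields a perfect set of such generics.

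For the relativization to $B$, I would run the same construction with $B$-computable trees and functionals $\Phi_e^{X\oplus B}$. This produces a perfect set of $G$ such that every function computable from $G\oplus B$ is dominated by a $B$-computable one. I would then take $A=G\oplus B$. Only countably many $G$ satisfy $G\le_T B$, so continuum many of these $A$ satisfy $A>_T B$.

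For (ii) I would split into two cases. First suppose $\emptyset'\le_T A$. Let $g(n)$ be the stage at which $n$ enters $\emptyset'$ if $n\in\emptyset'$, and $0$ otherwise. Then $g\le_T A$. If a computable $h$ dominated $g$, then $n\in\emptyset'$ would hold iff $n$ is enumerated by stage $h(n)$, making $\emptyset'$ computable, which is a contradiction.

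Now suppose $A\le_T\emptyset'$ and $A$ is noncomputable. Fix a computable approximation $A=\lim_s A_s$, and let $g(n)$ be the least $s\ge n$ with $A_s\restriction n=A\restriction n$; again $g\le_T A$. Suppose toward a contradiction that a computable $h$ dominates $g$. Form the $\Pi^0_1$ class
\[
C=\{X : (\forall n)(\exists s\in[n,h(n)])\,A_s\restriction n=X\restriction n\}.
\]
Then $A\in C$, and level $n$ of $C$ contains at most $h(n)-n+1$ strings. I would try to show that $C$ is countable, refining the choice of $g$ if necessary, for instance by demanding that the agreement with $A$ persists over a window of stages. Once $C$ is countable, I would use that a computably dominated member of a countable $\Pi^0_1$ class is computable. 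The idea is to run the Cantor--Bendixson analysis and argue that the member is isolated, using domination to bound the relevant search so that an isolated path is computable.

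The main obstacle I expect is exactly this last part: choosing $g$ so that $C$ is small, and showing that domination forces $A$ to be computable. A fallback would be a direct argument. Using $h$, define a computable guess for $A(n)$ by looking at the approximation near stage $h(m)$ for suitable $m>n$. Then show that the set of $n$ where this guess fails yields an $A$-computable function escaping every computable bound.
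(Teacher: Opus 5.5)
The paper gives no proof of this theorem; it only cites Martin and Miller, so your proposal has to stand on its own. Part (i) is fine. The forcing with computable perfect trees, the splitting into disjoint subtrees to get a perfect set of generics, and the relativization via $A=G\oplus B$ are all correct and standard. For the relativization, only countably many $G$ satisfy $G\leq_T B$, so continuum many of these $A$ satisfy $A>_T B$. In (ii), the case $\emptyset'\leq_T A$, handled via the settling-time function of $\emptyset'$, is also correct.

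\textbf{The gap.} It lies in the case where $A\leq_T\emptyset'$ and $A$ is noncomputable, which is the real content of (ii). You define the right $A$-computable function $g$ and the right $\Pi^0_1$ class $C$, but then you stop. You do not show that $C$ is countable, and you do not prove that a computably dominated member of a countable $\Pi^0_1$ class is computable. The Cantor--Bendixson sketch does not settle this. A member of a countable $\Pi^0_1$ class is in general isolated only in some iterated derived set, which is no longer $\Pi^0_1$, so ``isolated implies computable'' is not available there, and your domination step would carry the whole difficulty. The fallback argument is too vague to check. As written, this case is unproved.

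\textbf{The missing observation.} No refinement of $g$ is needed, because your class is already a singleton: $C=\{A\}$.
\begin{itemize}
\item Let $X\in C$. For each $n$ choose $s_n\in[n,h(n)]$ with $A_{s_n}\restriction n=X\restriction n$.
\item Fix $y$. For every $n>y$ this gives $X(y)=A_{s_n}(y)$.
\item Since $s_n\geq n\to\infty$ and $A_s(y)=A(y)$ for all sufficiently large $s$, it follows that $X(y)=A(y)$. Hence $X=A$.
\end{itemize}
This is exactly where the requirement $s\geq n$ in your definition of $g$ is used.

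So $A$ is the unique path through the computable tree of strings $\sigma$ such that for every $m\leq|\sigma|$ there is $s\in[m,h(m)]$ with $A_s\restriction m=\sigma\restriction m$. The unique path of a computable binary tree is computable. To compute $A(y)$, search for a level beyond $y$ at which all nodes of the tree agree at position $y$. Such a level exists by K\"onig's lemma, since otherwise the tree would have paths with both values at $y$. This contradicts the noncomputability of $A$ and completes (ii).
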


For elements of Cantor space, computable domination can be characterized in terms of 
truth-table reductions.  For $A, B\in 2^\N$, 
$B$ is \textbf{truth-table reducible} to $A$, denoted $B\leq_{tt} A$, if there is a computable total
$f:\N\rightarrow \N$ and a computable $R\subseteq 2^{<\N}\times \N$
such that
\begin{equation}\label{ttdef}
n\in B \iff R(A\restriction f(n), n)
\end{equation}
In other words, given $n$ we can compute a truth table, where $f(n)$ tells us how many oracle bits we have to look at, and for each row of the table $R$ tells whether we can conclude that $n\in B$.

There is another useful characterization of truth-table reducibility.
For $A, B\in 2^\N$, $B$ is Turing reducible to $A$, denoted $B\leq_T A$,
    if and only if there is a computable \textit{partial}
    $F: 2^\N \rightharpoonup 2^\N$ with $F(A)=B$.  When
    $B\leq_{tt} A$, we can say something stronger:

\begin{theorem}[Nerode; see \cite{soare_2016}, Theoem 3.8.5]
    $B\leq_{tt} A$ if and only if there is a computable
    \textit{total}
     $F:2^\N\rightarrow 2^\N$ with
    $F(A)=B$. 
\end{theorem}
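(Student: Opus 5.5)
The plan is to prove the two directions of Nerode's theorem separately, working directly from the definition (\ref{ttdef}) and using the effective compactness of $2^\N$ for the harder direction.

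\emph{($\Rightarrow$).} Suppose $B\leq_{tt} A$ via a computable total $f$ and a computable $R\subseteq 2^{<\N}\times\N$ as in (\ref{ttdef}). Define $F:2^\N\rightarrow 2^\N$ by
\[
n\in F(C) \iff R(C\restriction f(n), n).
\]
To compute the $n$th bit of $F(C)$, first compute $f(n)$, then read the first $f(n)$ bits of $C$, then decide $R$. This procedure halts on every oracle, so $F$ is a computable total function. Taking $C=A$ gives $F(A)=B$ directly from (\ref{ttdef}).

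\emph{($\Leftarrow$).} Suppose $F:2^\N\rightarrow 2^\N$ is computable and total with $F(A)=B$. Fix an oracle machine $\Phi$ that computes $F$, so that $\Phi^C(n)$ converges to $F(C)(n)$ for every $C\in 2^\N$ and every $n$.

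1. Fix $n$. For each $C$, the computation $\Phi^C(n)$ converges after using some finite initial segment of $C$.
2. The cylinders $[\sigma]$ such that $\Phi^\sigma(n)$ converges within $|\sigma|$ steps therefore cover $2^\N$.
3. By compactness of $2^\N$ (König's lemma), there is a level $k$ such that every $\sigma\in 2^k$ has $\Phi^\sigma(n)$ converging within $k$ steps.
4. Set $f(n)$ to be the least such $k$.
5. Set $R(\sigma,n)$ to hold iff $|\sigma|\geq f(n)$ and $\Phi^{\sigma\restriction f(n)}(n)$ converges within $f(n)$ steps with output $1$.

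Then $n\in B$ iff $F(A)(n)=1$ iff $R(A\restriction f(n),n)$, which is exactly (\ref{ttdef}).

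The main obstacle is checking that $f$ is computable and total. Totality is exactly the compactness argument in step 3. Computability holds because for each candidate $k$ the condition ``$\Phi^\sigma(n)$ converges within $k$ steps for all $\sigma\in 2^k$'' is a finite check, so we can search $k=0,1,2,\dots$ until it succeeds. One should also note that this condition, once it holds at $k$, continues to hold at every $k'\geq k$, since convergence within $k$ steps on $\sigma$ persists for all extensions of $\sigma$ and larger step bounds. This monotonicity is what justifies using the least witness $k$ as $f(n)$. Finally, $R$ is computable because it only runs a bounded computation.
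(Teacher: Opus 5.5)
Your proof is correct and is the standard compactness argument. The paper does not prove this statement itself (it cites Soare), but its Lemma relating $\mathcal{S}\Delta^0_1(A)$ to $\leq_{tt}$ uses the same device of searching for the least level $k$ at which every $\sigma\in 2^k$ settles the computation.

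One small correction: monotonicity of convergence under extension is what you actually need in step 3, to pass from a finite subcover by cylinders of different lengths to a single uniform level $k$. It is not needed to justify taking the \emph{least} witness, since any witness would serve as $f(n)$.
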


$A\in 2^\N$ is called \textbf{$tt$-irreducible} if $B\leq_T A$ implies
$B\leq_{tt} A$ for every $B\in 2^\N$.  
Compare this with Theorem \ref{thm:totalhyp}, which says
that $A$ being hyperlow is equivalent to the existence of a
total $\Delta^1_1$ function mapping $A$ to $B$ whenever $B\in \Delta^1_1(A)$. 
Thus,
hyperlowness is, in some sense, a $\Delta^1_1$ analog of being
$tt$-irreducible.  We will make the analogy even stronger,
in Theorem \ref{thm:relopenatn}, by showing
that both properties are connected to strongly 
relativizing particular self-dual Kleene pointclasses.
First, we point out the following equivalent characterization
of $tt$-irreducible reals:

\begin{theorem}[See \cite{soare_2016}, Theorem 5.6.4]
$A\in 2^\N$ is computably dominated if and only if $A$ is
$tt$-irreducible.
\end{theorem}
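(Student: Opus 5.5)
The plan is to prove the two directions separately, using Nerode's characterization of $\leq_{tt}$ by total computable functionals $F:2^\N\to 2^\N$ throughout.

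($\Rightarrow$) Suppose $A$ is computably dominated and $B\leq_T A$, say $B=\Phi^A$ for a Turing functional $\Phi$. First I define the settling-time function $t(n)=$ the number of steps in the computation $\Phi^A(n)$. Since $B$ is total, $t$ is total, and $t\leq_T A$. Computable domination then gives a computable $h$ with $t(n)\leq h(n)$ for all $n$. Next I define a functional $F$ on all of $2^\N$: $F(X)(n)$ runs $\Phi^{X}(n)$ for $h(n)$ steps and outputs the result if the computation halts, and $0$ otherwise. This $F$ is total and computable, and it only consults $X\restriction h(n)$, so it is literally a truth table in the sense of (\ref{ttdef}) with $f=h$. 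Because $t\leq h$, we get $F(A)=B$, hence $B\leq_{tt}A$. This direction is routine.

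($\Leftarrow$) Suppose $A$ is $tt$-irreducible and $g\leq_T A$ via $g=\Phi^A$. It suffices to dominate the settling-time function $t$ of $\Phi^A$, since $g(n)$ is bounded by a computable function of $t(n)$: the output is written within $t(n)$ steps. The idea is to code $t$ into an $A$-computable set $D$ of \emph{halting questions}, apply $tt$-irreducibility to get a truth table for $D$ with computable use $f$, and then use the finiteness of the table rows to read off a computable bound. Concretely, I take
\[
D=\{\langle e,n\rangle : \varphi_e(e)\text{ halts within } t(n)\text{ steps}\},
\]
which is $A$-computable, so $D(x)=R(A\restriction f(x),x)$ for computable $R$ and $f$. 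By the parametrized recursion theorem I then choose, uniformly in $n$, an index $e_n$ whose program deliberately behaves against the table. The program $\varphi_{e_n}(e_n)$ computes the finitely many row values $R(\sigma,\langle e_n,n\rangle)$ for $\sigma\in 2^{f(\langle e_n,n\rangle)}$ and halts or diverges according to them. The divergent case must contradict the table's value on $A$, which forces halting at a computable stage $S_n$. The table's answer on $A$ must then translate into $t(n)<S_n$, so that $h(n)=S_n$ dominates $t$.

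The main obstacle is getting the direction of this last inequality right. With $D$ as above, the forcing argument naturally shows that $\varphi_{e_n}(e_n)$ halts, but a row answering ``halts within $t(n)$'' only yields a \emph{lower} bound on $t(n)$. So I expect to have to refine the coding. One option is to use one index $e_{n,\sigma}$ per row $\sigma$, so that the true row $A\restriction f$ is among them. Another is to replace $D$ by a set asking whether $\Phi^{A}(n)$ has \emph{not} converged by the halting time of $\varphi_e(e)$. Then take $h(n)$ to be the maximum of the computable halting stages over all the (finitely many) rows. If this diagonal design proves unwieldy, my fallback is the contrapositive: assume $t$ is not computably dominated and diagonalize directly against each total computable functional $F_i$ to build a $B\leq_T A$ with $B\neq F_i(A)$ for all $i$. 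At stage $i$, let $B$ wait $t(\cdot)$ steps to see $F_i(A)$ on fresh arguments and disagree; the failure of domination ensures infinitely many such opportunities are seen in time.
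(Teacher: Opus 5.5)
The paper gives no proof of this statement (it is quoted from Soare), so I am judging your argument on its own. Your ($\Rightarrow$) direction is the standard Martin--Miller argument and is correct. Your main route for ($\Leftarrow$), however, has a genuine gap, and it is not just the direction of an inequality. It uses $tt$-irreducibility only through the single set $D$, and $D\leq_{tt}A$ by itself cannot force $t$ to be computably dominated.

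To see this, let $\Phi$ query only even bits of its oracle, so that its settling time $t$ on $C\oplus Y$ depends only on $C$. Choose $C,\Phi$ with $t$ not computably dominated: for example $C=\emptyset'$, with $\Phi$ computing the settling time of $\emptyset'\restriction n$ by search, so that $t(n)$ is at least that settling time. Define $D$ from this $t$ and let $A:=C\oplus D$. Then $\Phi^A$ has settling time $t$ and $D\leq_m A$ via $x\mapsto 2x+1$, yet $t$ is not computably dominated. So no choice of the indices $e_n$ by the recursion theorem, and no ``one index per row'' bookkeeping, can turn a truth table for $D$ into a computable bound. The same applies to your variant set, and to any set computed from $t$ by a fixed functional. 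Intuitively, your programs have no oracle: they see the table but not which row is $A$'s, and they learn an upper bound on $t(n)$ only when the table is constant. Nothing forces that; in the example above every table is the non-constant one-query table $A(2x+1)$.

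The missing idea is to let the $A$-computable set evaluate the tables on $A$ itself, which cannot be absorbed into the oracle as above. That is your fallback, but it must be organized by columns rather than sequential stages. You cannot enumerate only the total reductions, and a stage devoted to a non-total candidate would never end. Reading every number as a code for a truth table, set
\[
B(\langle i,n\rangle)=\begin{cases} 1-\tau(A) & \text{if } \varphi_i(\langle i,n\rangle) \text{ converges within } g(n) \text{ steps with output } \tau,\\ 0 & \text{otherwise.}\end{cases}
\]
Then $B\leq_T A$. If $A$ is $tt$-irreducible, then $B\leq_{tt}A$ via some total $\varphi_{i_0}$. Let $m(n)$ be the number of steps $\varphi_{i_0}(\langle i_0,n\rangle)$ takes to converge; this is a computable function. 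If $g(n)\geq m(n)$ for some $n$, then $B(\langle i_0,n\rangle)=1-\tau(A)$ for $\tau=\varphi_{i_0}(\langle i_0,n\rangle)$, contradicting the reduction. Hence $g(n)<m(n)$ for all $n$.

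Read contrapositively, this is exactly your fallback. Read directly, it realizes your original self-referential idea: the self-reference comes from the index $i_0$ inside the argument $\langle i_0,n\rangle$, and the dominating function is the running time of the truth-table reduction itself. Using $g$ as the clock also removes the need to pass through $t$.
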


Note that, with the above definitions, it does not
make sense to talk about $f\in \baire\setminus 2^\N$ being
$tt$-irreducible, but we can talk about $f$ being computably
dominated.

The following lemma establishes the essential connection
between truth-table reductions and the class
$\mathcal{S}\Delta^0_1(A)\restriction \N$.

\begin{lemma} Let $A, B\in 2^\N$.  
$B$ is $\mathcal{S}\Delta^0_1(A)$ if
and only if $B\leq_{tt} A$.
\end{lemma}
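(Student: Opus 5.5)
We prove the two directions separately, using the earlier proposition that $\mathcal{S}\Delta^0_1(A)\restriction\N$ may be computed with $2^\N$ as the parameter space. Then we may take $Q\subseteq 2^\N\times\N$ in $\Delta^0_1$.

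($\Leftarrow$) Suppose $B\leq_{tt}A$, witnessed by a computable total $f$ and a computable $R\subseteq 2^{<\N}\times\N$ as in (\ref{ttdef}). Define $Q\subseteq 2^\N\times\N$ by
\[
Q(C,n)\iff R(C\restriction f(n),n).
\]
For each fixed $n$, membership depends only on the finite initial segment $C\restriction f(n)$, which ranges over a computable finite list of strings. So $Q$ is the finite union $\bigcup\{[\sigma]\times\{n\} : |\sigma|=f(n),\ R(\sigma,n)\}$, and it is uniformly $\Sigma^0_1$. Its complement is the analogous union over the $\sigma$ with $\neg R(\sigma,n)$, so it is also $\Sigma^0_1$. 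Hence $Q$ is $\Delta^0_1$ and $B=Q_A$. Alternatively, we can apply Nerode's theorem: take a total computable $F$ with $F(A)=B$ and set $Q(C,n)\iff F(C)(n)=1$, which is $\Delta^0_1$ because $F$ is total and computable.

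($\Rightarrow$) Suppose $B=Q_A$ with $Q\subseteq 2^\N\times\N$ in $\Delta^0_1$. As in the proof of the second proposition, write $Q$ and $\neg Q$ via c.e. sets $Q_0,Q_1\subseteq 2^{<\N}\times\N$, closed upward in the string argument, so that $Q(C,n)\iff(\exists\sigma\sqsubset C)\,Q_0(\sigma,n)$ and $\neg Q(C,n)\iff(\exists\sigma\sqsubset C)\,Q_1(\sigma,n)$. Here $x\in N_s(\N)$ is decidable, so we fold the neighborhood index into the string relation.

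The key step, and the only real obstacle, is compactness. Fix $n$. The cylinders $[\sigma]$ with $Q_0(\sigma,n)$ or $Q_1(\sigma,n)$ cover $2^\N$, so finitely many of them already do. Because the relations are upward closed, some length $k$ has the property that every $\sigma$ of length $k$ is enumerated into $Q_0(\cdot,n)$ or $Q_1(\cdot,n)$. Dovetailing the enumerations of $Q_0,Q_1$, we search for the first stage and the least such $k$. This defines a total computable function $f(n)$ together with a computable table $R(\sigma,n)$ for $|\sigma|=f(n)$: set $R(\sigma,n)$ true iff $\sigma$ was enumerated into $Q_0$. No $\sigma$ lies in both $Q_0(\cdot,n)$ and $Q_1(\cdot,n)$, since otherwise some $C\sqsupset\sigma$ would satisfy both $Q(C,n)$ and $\neg Q(C,n)$; so the assignment is consistent.

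$R$ is computable because its value is determined by a halting finite search: compute $f(n)$ and record which strings were seen where. Then $n\in B\iff Q(A,n)\iff R(A\restriction f(n),n)$, which is exactly (\ref{ttdef}). Hence $B\leq_{tt}A$.
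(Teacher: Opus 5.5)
Your proof is correct and follows the paper's argument. The ($\Leftarrow$) direction is read off from the definition of a truth-table reduction, and for ($\Rightarrow$) compactness of $2^\N$ yields, computably in $n$, a length $f(n)$ at which every string already decides $Q(\cdot,n)$. The only cosmetic difference is that the paper takes the approximating relations $P_0,P_1$ to be computable, so the least such $k$ is computable outright, whereas you use c.e.\ relations and take the first $k$ found in a dovetailed search, which works equally well.
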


\begin{proof}
    $(\Leftarrow)$ is immediate from the definition
    of $tt$-reduction, specifically equivalence (\ref{ttdef}).

    $(\Rightarrow)$ Suppose there is a computable
    $P\subseteq 2^\N\times \N$ such that
    \[
    n\in B \iff P(A, n)
    \]
    Pick  $P_0, P_1\subseteq 2^{<\N}\times \N$ which are 
    computable, closed upwards in the first argument,
    and satisfy
    \begin{align*}
    P(C, n) &\iff (\exists k)P_0(C\restriction k, n), \\
    \neg P(C, n) &\iff (\exists k) P_1(C\restriction k, n)
    \end{align*}
    for every $C\in 2^\N$ and every $n\in \N$. 
    By compactness of $2^\N$, 
    for every $n\in \N$ there is a $k$ such that 
    (exactly) one of $P_i(\sigma, n)$, $i=0, 1$, holds
    for every length $k$ string $\sigma$.
    Moreover, if we let $f(n)$ be the least such $k$,
    then the function $f:\N\rightarrow \N$ is clearly computable.  Thus,
    \[
    n\in B \iff P_0(A\restriction f(n), n),
    \]
    which shows $B\leq_{tt} A$
\end{proof}

\begin{theorem} \label{thm:relopenatn}
$A\in 2^\N$ strongly relativizes
 $\Delta^0_1$ at $\N$ if and only if $A$ is
computably dominated.  In particular, there
are continuum-many reals which strongly 
relativize $\Delta^0_1$ at $\N$.  
\end{theorem}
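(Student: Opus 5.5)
The plan is to reduce the statement to the chain of equivalences already assembled in this section: strong relativization at $\N$ amounts to every $\Delta^0_1(A)$ subset of $\N$ being $tt$-reducible to $A$, i.e.\ to $A$ being $tt$-irreducible. Computable domination then follows from the Martin--Miller/Soare characterization.

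First I would observe that for $B\subseteq\N$, being $\Delta^0_1(A)$ is the same as $B\leq_T A$. Indeed, $B$ is $\Sigma^0_1(A)$ and $\Pi^0_1(A)$ exactly when $B$ and its complement are both $A$-c.e., which by the relativized Post's theorem means $B\leq_T A$. Here $B$ is viewed as an element of $2^\N$ via its characteristic function. Next, the preceding lemma gives that $B$ is $\mathcal{S}\Delta^0_1(A)$ if and only if $B\leq_{tt}A$. Recall that by the notational convention $\mathcal{S}\Delta^0_1(A)=\mathcal{S}\Delta^0_1(\baire;A)$, which agrees with the Cantor-space version on $\N$ by the earlier proposition for effectively zero-dimensional spaces. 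So the condition $\mathcal{S}\Delta^0_1(A)\restriction\N=\Delta^0_1(A)\restriction\N$ translates to: for every $B\in 2^\N$, $B\leq_T A$ implies $B\leq_{tt}A$. That is precisely the definition of $A$ being $tt$-irreducible. The converse inclusion $\leq_{tt}\subseteq\leq_T$ is automatic, matching the trivial inclusion $\mathcal{S}\Delta^0_1(A)\subseteq\Delta^0_1(A)$.

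Second, I would invoke the cited theorem (Soare, Theorem 5.6.4): $A\in 2^\N$ is $tt$-irreducible if and only if $A$ is computably dominated. Chaining the two equivalences gives the main claim. For the ``in particular'' part, Theorem \ref{martinmiller}(i) provides continuum-many computably dominated $A\in 2^\N$, and each of these strongly relativizes $\Delta^0_1$ at $\N$.

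The only step needing any care is the first translation. There one must check that $\Delta^0_1(A)\restriction\N$ coincides with the sets Turing reducible to $A$, which is a standard fact but should be stated explicitly. One must also check that the lemma's $\mathcal{S}\Delta^0_1(A)$ refers to the same class as in the definition of strongly relativizing, which the convention and the zero-dimensional proposition settle. Everything else is direct citation, so I expect no real obstacle.
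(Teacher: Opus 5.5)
Your proposal is correct and follows essentially the same route as the paper. The paper's proof is exactly your chain: strongly relativizing at $\N$ $\iff$ ($B\leq_T A \Rightarrow B\leq_{tt} A$ for all $B$) via the preceding lemma $\iff$ $tt$-irreducible $\iff$ computably dominated, with Martin--Miller supplying continuum many. Your explicit identification of $\Delta^0_1(A)\restriction\N$ with the $A$-computable sets, and your reconciliation of the Baire- and Cantor-space versions of $\mathcal{S}\Delta^0_1(A)$, just spell out steps the paper leaves implicit.
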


\begin{proof}
$A$ strongly relativizes
$\Delta^0_1$ at $\N$ 
iff every $\Delta^0_1(A)$ set is $\mathcal{S}\Delta^0_1(A)$
iff for every $B\in 2^\N$, $B\leq_{T} A$ implies $B\leq_{tt} A$
iff $A$ is $tt$-irreducible iff $A$ is computably dominated.
\end{proof}

 It is now natural to ask whether this characterization
 of strongly relativizing $\Delta^0_1$ at $\N$
 extends to elements of Baire space.  We
 will see in Theorem \ref{thm:turingnopreserve} that it does not.
 There are indeed $f\in \baire$ which strongly
 relativize $\Delta^0_1$ at $\N$ but which
 are not computably dominated.  
 In particular, this means that
 strongly relativizing $\Delta^0_1$ at $\N$
 is not a property that is preserved under
 Turing equivalence (in $\baire$).  
As seen in the next proposition, a slightly
stronger condition than Turing equivalence
does preserve strongly relativizing $\Delta^0_1$ at
any space.

 \begin{proposition}\label{prop:transferopen}
Let 
$\mathcal{X}$ be a space, and let
$f, g\in \baire$.
    Suppose  $g\leq_T f$ and there is a computable \textit{total} $F:\baire \rightarrow \baire$ with $F(g)=f$.  Then, if
        $f$ strongly relativizes $\Delta^0_1$ at $\mathcal{X}$,
        then $g$ strongly relativizes $\Delta^0_1$ at $\mathcal{X}$. 
\end{proposition}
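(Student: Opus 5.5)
Given $P\subseteq\mathcal{X}$ in $\Delta^0_1(g)$, we first move it into $\Delta^0_1(f)$, then apply the hypothesis on $f$ to get a $\Delta^0_1$ relation $Q\subseteq\baire\times\mathcal{X}$ with $P=Q_f$, and finally pull $Q$ back along the total computable map $F$. The candidate witness for $g$ is
\[
R(h,x)\iff Q(F(h),x),
\]
and we must check two things: that $R$ is in $\Delta^0_1$, and that $R_g=P$.

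For the first step, the assumption $g\leq_T f$ means $g=G(f)$ for some computable partial $G$ whose domain contains $f$. If $P=P'_g$ with $P'$ a $\Sigma^0_1$ subset of $\baire\times\mathcal{X}$, then the relation
\[
x\mapsto P'(G(f),x)
\]
is $\Sigma^0_1(f)$. To see this, write $P'$ in normal form as $(\exists s,t)[h\in N_s(\baire)\ \&\ x\in N_t(\mathcal{X})\ \&\ R_0(s,t)]$. The condition $G(f)\in N_s(\baire)$, i.e.\ $\sigma\sqsubset G(f)$, is then semidecidable from $f$ because $f\in\operatorname{dom}G$. Applying the same argument to the complement of $P$ shows $P\in\Delta^0_1(f)$. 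So $P$ lies in $\Delta^0_1(f)$, and by hypothesis there is a $\Delta^0_1$ set $Q\subseteq\baire\times\mathcal{X}$ with $P=Q_f$.

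For the second step, we check that $R$ is $\Delta^0_1$. Both $R$ and its complement are preimages of $\Sigma^0_1$ sets under the map $(h,x)\mapsto(F(h),x)$, which is total and computable. Preimages of $\Sigma^0_1$ sets under total computable maps are $\Sigma^0_1$. Concretely, $F(h)\in[\sigma]$ holds iff the computation of $F$ on $h$ eventually outputs $\sigma$, and this is witnessed by a finite initial segment of $h$. Totality matters here: it guarantees that the complement of $R$ is exactly the preimage of the complement of $Q$, so both are $\Sigma^0_1$ on all of $\baire\times\mathcal{X}$. Finally,
\[
R(g,x)\iff Q(F(g),x)\iff Q(f,x)\iff P(x),
\]
so $R_g=P$, and therefore $P\in\mathcal{S}\Delta^0_1(g)$.

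The only delicate point is the first step, transferring $\Delta^0_1(g)$ to $\Delta^0_1(f)$ through a merely partial reduction. It should be routine, because only the single point $f$, which lies in the domain of $G$, is relevant. We never need $G$ to be total. The strong (total) hypothesis is used only on $F$, in the pullback step, which is where the argument would fail with only $f\leq_T g$.
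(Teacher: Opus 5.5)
Your proposal is correct and follows the paper's proof. Both arguments move $P$ into $\Delta^0_1(f)$ using $g\le_T f$, take a $\Delta^0_1$ set $Q$ with $P=Q_f$, and pull $Q$ back along the total computable $F$ via $R(h,x)\iff Q(F(h),x)$, relying on closure of $\Delta^0_1$ under total computable substitution. Your added detail on why the partial reduction suffices in the first step, and why totality of $F$ is needed for the complement of $R$ to be $\Sigma^0_1$, is accurate.
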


\begin{proof} 
Suppose $f$ strongly relativizes $\Delta^0_1$ at $\mathcal{X}$.
Let $P\subseteq \mathcal{X}$ be $\Delta^0_1(g)$.  Since $g\leq_T f$,
$P$ is also $\Delta^0_1(f)$.  Since $f$ strongly relativizes 
$\Delta^0_1$ at $\mathcal{X}$, there is a $\Delta^0_1$ set $Q\subseteq \baire\times \mathcal{X}$ such that $P=Q_f$.  Define $R\subseteq \baire\times \mathcal{X}$ by
\[
R(h, x) \iff  Q(F(h), x),
\]
which clearly has $P=R_g$.  Moreover, $R$
is in $\Delta^0_1$ since $\Delta^0_1$ is closed
under computable \textit{total} substitutions.  
\end{proof}

We can use this proposition to show that
one implication from Theorem \ref{thm:relopenatn}
holds for elements of Baire space.

\begin{theorem}
    \label{thm:openatn}
    If $f\in \baire$ is computably dominated, then
    it strongly relativizes $\Delta^0_1$ at $\N$.  
\end{theorem}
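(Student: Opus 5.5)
The plan is to reduce to the Cantor space case via Proposition \ref{prop:transferopen} and Theorem \ref{thm:relopenatn}. Given a computably dominated $f\in\baire$, I will code $f$ by a set $A\in 2^\N$ and apply Proposition \ref{prop:transferopen} with $g:=f$ and the roles arranged so that $A$ plays the part of the real known to strongly relativize. Concretely, I need $A\in 2^\N$ satisfying three conditions: $f\leq_T A$; there is a computable \emph{total} $F:\baire\to\baire$ with $F(f)=A$; and $A$ is computably dominated. Then Theorem \ref{thm:relopenatn} shows that $A$ strongly relativizes $\Delta^0_1$ at $\N$, and Proposition \ref{prop:transferopen} transfers this to $f$.

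For the coding I will take the graph of $f$, $A:=\{\langle n,k\rangle : f(n)=k\}$. Then $f\leq_T A$, since $f(n)$ is the unique $k$ with $\langle n,k\rangle\in A$ and a search for it terminates. The map $F(h):=\{\langle n,k\rangle : h(n)=k\}$ (as a characteristic function) is computable and total on all of $\baire$, because deciding $\langle n,k\rangle\in F(h)$ needs only $h(n)$. Clearly $F(f)=A$.

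The remaining point, and the only place computable domination enters, is that $A$ is computably dominated. Here I use $A\equiv_T f$, which holds since $A=F(f)$ and $F$ is computable. So every $A$-computable function is $f$-computable, and is therefore dominated by a computable function because $f$ is computably dominated; computable domination depends only on the Turing degree. This yields the theorem.

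I expect no step to be a serious obstacle. The subtle point is that Proposition \ref{prop:transferopen} requires the direction $F(g)=f$ with $g$ the real we want to conclude about. In our application $g$ is the original $f\in\baire$ and the proposition's ``$f$'' is $A$, so we need $f\leq_T A$ together with a total map from $f$ to $A$. The graph coding supplies exactly this. The reverse total map from $A$ to $f$ would generally fail to exist, which is consistent with the later claim that the converse of the theorem fails in $\baire$.
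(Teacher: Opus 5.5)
Your proposal is correct and is essentially the paper's own proof. The paper also takes $A=G_f$ (the graph of $f$), notes that $h\mapsto G_h$ is computable and total with $G_f\equiv_T f$, and applies Theorem \ref{thm:relopenatn} to $G_f$ and then Proposition \ref{prop:transferopen}, with the roles arranged exactly as you describe.
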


\begin{proof}
    Let $f\in \baire$ be computably dominated.
    Fix a computable bijection $(n, m)\mapsto \langle n, m\rangle$ from $\N^2$ to $\N$.  For $h\in \baire$, let 
    $G_h=\{\langle n, k\rangle\in \N : h(n)=k\}$.
    The total function $F:\baire \rightarrow 2^\N$, $F(h):=G_h$, is
    clearly computable.  Now, $F(f)=G_f$ and we also have $G_f\equiv_T f$.
    So, $G_f$ is computably dominated, hence
    strongly relativizes $\Delta^0_1$ at $\N$ by
    Theorem \ref{thm:relopenatn}. 
    It now follows by Proposition \ref{prop:transferopen} that $f$
    strongly relativizes $\Delta^0_1$ at $\N$.  
\end{proof}

Now, we turn our attention to $\Delta^0_1$
subsets of uncountable zero-dimensional spaces.  Since
$2^\N$ is compact, every real 
strongly relativizes $\Delta^0_1$ at $2^\N$.
Therefore, we focus on strongly relativizing
$\Delta^0_1$ subsets of $\baire$.

Our examples of strongly relativizing reals for subsets
of $\baire$ come from definable singletons.  
If $\Gamma$ is a pointclass, we call $f\in \baire$
a \textbf{$\Gamma$-singleton} if the singleton
$\{f\}$ is a $\Gamma$ subset of $\baire$.  

\begin{theorem}\label{thm:pi01singleton}
    If $f\in \baire$ is a $\Pi^0_1$-singleton,
    then $f$ strongly relativizes $\Delta^0_1$
    at $\baire$.
\end{theorem}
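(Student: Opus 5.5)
The plan is to start from a $\Delta^0_1(f)$ set $P\subseteq\baire$ and build directly a computable relation $Q\subseteq\baire\times\baire$ with $Q_f=P$. The $\Pi^0_1$-singleton hypothesis will let us decide computably, at each point, whether the parameter is still compatible with $f$. Fix a computable tree $T\subseteq\N^{<\N}$ whose unique infinite branch is $f$; equivalently, $\{f\}=[T]$ and $\baire\setminus\{f\}$ is $\Sigma^0_1$. Write $P$ and its complement in normal form using $\Sigma^0_1$ sets $P_0,P_1\subseteq\baire\times\baire$:
\[
P(x)\iff P_0(f,x),\qquad \neg P(x)\iff P_1(f,x).
\]
Via c.e.\ relations on pairs of strings, $P_i(h,x)$ holds iff there are $\sigma\sqsubset h$ and $\tau\sqsubset x$ with $(\sigma,\tau)$ enumerated into a c.e.\ set $W_i$.

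Next define $Q(h,x)$ by the following search procedure, carried out computably in $(h,x)$. Search simultaneously for three things: a stage $s$ and strings $\sigma\sqsubset h$, $\tau\sqsubset x$ with $(\sigma,\tau)\in W_{0,s}$; the same with $W_1$; or an initial segment $\sigma\sqsubset h$ with $\sigma\notin T$. Which event occurs first determines the answer. If the first event found is the $W_0$ one, set $Q(h,x)$ true. If it is the $W_1$ one, or the witness that $h\notin[T]$, set $Q(h,x)$ false. Since $P_0(f,\cdot)$ and $P_1(f,\cdot)$ partition $\baire$, the search halts for every $(h,x)$. For $h=f$, at least one of the first two searches succeeds. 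For $h\neq f$, we have $h\notin[T]$, so the third search succeeds. Thus the procedure defines a total computable (Turing functional) decision for every pair, so $Q$ and its complement are both $\Sigma^0_1$, and $Q$ is $\Delta^0_1$.

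It remains to check $Q_f=P$. When $h=f$, the third search never succeeds because $f\in[T]$. Also, at most one of $P_0(f,x)$ and $P_1(f,x)$ holds, so the answer is exactly $P(x)$.

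The only real subtlety, and the step I expect to be the main obstacle, is totality of the search off the parameter $f$. For $h\neq f$ there is no reason for $P_0(h,x)$ or $P_1(h,x)$ to ever be witnessed, which is precisely why a general $f$ fails. Unlike the $2^\N$ case, compactness cannot rescue this. The $\Pi^0_1$-singleton hypothesis is what supplies an alternative halting event. One must be careful that this alternative event is uniformly $\Sigma^0_1$ in $h$ alone, independent of $x$, and that it never fires at $h=f$; both hold because $\neg[T]$ is effectively open and $f\in[T]$. The same argument in fact works at any effectively zero-dimensional space $\mathcal{X}$ in place of the second $\baire$.
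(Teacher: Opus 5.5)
Your proof is correct and is essentially the paper's argument: the paper defines $Q$ by the same race between an $R_0$-witness, an $R_1$-witness, and $h$ leaving $T$, using computable upward-closed relations $R_0,R_1$ on pairs of strings in place of stages of c.e.\ sets. The only difference is packaging: the paper writes out explicit $\Sigma^0_1$ definitions of $Q$ and $\neg Q$ and checks their equivalence by cases, whereas you obtain $\Delta^0_1$-ness directly by observing that the race is a total Turing functional.
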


\begin{proof}
    Pick a computable tree $T$ on $\N$ such
    that $[T]=\{f\}$.  Let $P\subseteq \baire$
    be $\Delta^0_1(f)$.  Choose computable
    $R_0, R_1\subseteq (\N^{<\N})^2$, closed upwards
    in both arguments, such that
    \begin{align*}
    P(g) &\iff (\exists n)R_0(f\restriction n, g\restriction n)\\
    \neg P(g) &\iff (\exists n)R_1(f\restriction n, g\restriction n)
    \end{align*}
    for all $g\in \baire$.  Now, define $Q\subseteq \baire^2$ by 
    \[
    Q(h, g) \iff (\exists n)[h\restriction n\in T
    \ \& \ R_0(h\restriction n, g \restriction n) \ \& \ (\forall i<n)\neg R_1(h\restriction i, g \restriction i)]
    \]
    Immediately, $Q$ is $\Sigma^0_1$ and $P=Q_f$ since
    $f\restriction n \in T$ for all $n\in \N$.
    However, we can also show $Q$ is $\Pi^0_1$ by establishing the
    equivalence
    \begin{multline} \label{notq}
        \neg Q(h, g) \iff (\exists n)[h\restriction n\notin T \ \& \ (\forall i<n)\neg (R_0(h\restriction i, g \restriction i) \ \vee R_1(h\restriction i, g \restriction i))] \\
        \vee (\exists n)[h\restriction n\in T
    \ \& \ R_1(h\restriction n, g \restriction n) \ \& \ (\forall i\leq n)\neg R_0(h\restriction i, g \restriction i)].
    \end{multline}
    Once we show this equivalence is true, we have
    that $Q$ is $\Delta^0_1$ and $P=Q_f$ is
    $\mathcal{S}\Delta^0_1(f)$. 
    
    The ($\Leftarrow$) direction of equivalence (\ref{notq}) is clear, so we only need to show
    $(\Rightarrow)$.  Suppose $\neg Q(h, g)$ holds.  
    The implication is clearly true when $h=f$, so suppose
    $h\neq f$.   We examine two cases.
    In the first case, there is $n$ such that $h\restriction n\in T$
    and at least one of $R_j(h\restriction n, g\restriction n)$, $j=0, 1$, holds.  Consider the least such $n$. Because $\neg Q(h, g)$ holds, it cannot be that $R_0(h\restriction n, g\restriction n)$ is true.  Thus, this $n$ witnesses that the second disjunct 
    (\ref{notq}) holds. 
    
    If the first case fails, look at the least $n$ with $h\restriction n\notin T$ (which must exists since $h\neq f$).
    Since the first case fails, both $R_0(h\restriction i, g\restriction i)$ and $R_0(h\restriction i, g\restriction i)$ must fail for all $i<n$.   It follows that $n$ witnesses
    that the first disjunct of (\ref{notq}) is true. 
    Thus, we have proven $(\Rightarrow)$ for equivalence (\ref{notq}).
\end{proof}

We make several remarks about this result:

(1) The only $A\in 2^\N$ which are $\Pi^0_1$-singletons are the computable elements of $2^\N$.
So, Theorem \ref{thm:pi01singleton} is
only interesting for elements
of $\baire\setminus 2^\N$.  

(2) There are many interesting, nontrivial $\Pi^0_1$-singletons.  In fact,
every $\Delta^1_1$ real is computed
by a $\Pi^0_1$-singleton; see \cite[4A.8]{moschovakis2009}.
Moreover, every $\emptyset^{(\alpha)}$
is Turing equivalent to a $\Pi^0_1$-singleton
$f\in \baire\setminus 2^\N$; see \cite[Theorem 2.1.4 and Proposition 2.1.5]{chong2015}.

(3) On the other hand, not every $\Delta^1_1$
real is a $\Pi^0_1$-singleton; see \cite{feferman_1965}.
This leaves us with the following question.

\begin{question}
Are there non-$\Delta^1_1$ reals which
strongly relativize $\Delta^0_1$ at $\baire$?
Are there continuum-many?
\end{question}

\begin{remark} \label{remark:lutz}
    A natural idea to find more reals which strongly
    relativize $\Delta^0_1$ at $\baire$ is to look
    at the higher-type analog of computable domination.
    Say a real $A\in 2^\N$ is \textbf{computably Baire-dominated}
    if every total $F:\baire \rightarrow \N$ which is 
    $A$-computable is pointwise dominated by a
    computable total $G:\baire \rightarrow \N$.  
    It is not difficult to show that any real which
    is computably Baire-dominated must strongly
    relativize $\Delta^0_1$ at $\baire$.  However,
    Patrick Lutz proved that the only computably Baire-dominated
    reals are the computable reals.  Lutz's proof is
    given in the appendix.  
\end{remark}

Now, we can show that Turing equivalence does
not preserve strongly relativizing $\Delta^0_1$
at $\N$ or at $\baire$.  

\begin{theorem}\label{thm:turingnopreserve}
    There exists $f\in \baire$ and $A\in 2^\N$
    such that $f$ strongly
    relativizes $\Delta^0_1$ at $\baire$
    and $f\equiv_T A$, but $A$ does not
    strongly relativize $\Delta^0_1$ at $\baire$, or even at $\N$.
    In particular, there exists $f\in \baire$
    which strongly relativizes $\Delta^0_1$
    at $\baire$ but is not computably dominated.  
\end{theorem}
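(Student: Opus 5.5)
We take $f$ to be a noncomputable $\Pi^0_1$-singleton and $A$ an element of $2^\N$ with $f\equiv_T A$. By remark (2) after Theorem \ref{thm:pi01singleton}, there is a $\Pi^0_1$-singleton $f\in\baire\setminus 2^\N$ with $f\equiv_T \emptyset'$ (take $\alpha=1$). Let $A:=\emptyset'\in 2^\N$, so $f\equiv_T A$. By Theorem \ref{thm:pi01singleton}, $f$ strongly relativizes $\Delta^0_1$ at $\baire$.

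Next we show that $A$ does not strongly relativize $\Delta^0_1$ at $\N$. Since $A=\emptyset'$ is noncomputable and trivially Turing comparable with $\emptyset'$, Theorem \ref{martinmiller}(ii) shows that $A$ is not computably dominated. Theorem \ref{thm:relopenatn} then says that $A$ does not strongly relativize $\Delta^0_1$ at $\N$. By Corollary \ref{cor:reciso}(2), strong relativization at $\baire$ implies strong relativization at $\N$, so $A$ does not strongly relativize $\Delta^0_1$ at $\baire$ either.

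For the final clause, suppose $f$ were computably dominated. Every $g\leq_T A$ satisfies $g\leq_T f$, and computable domination is defined via the Turing-below functions, so it is invariant under Turing equivalence. Hence $A$ would also be computably dominated, which contradicts the previous paragraph. Therefore $f$ strongly relativizes $\Delta^0_1$ at $\baire$ but is not computably dominated.

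The only step that needs care is the existence of a $\Pi^0_1$-singleton Turing equivalent to $\emptyset'$. We take this from the cited facts in remark (2). If one wanted it explicitly, one could use the function $f(n)=$ (the least stage $s$ at which $n$ enters $\emptyset'$, plus $1$, or $0$ if $n\notin\emptyset'$), coded together with its witnesses so that the graph is $\Pi^0_1$. The nonmembership condition is exactly the $\Pi^0_1$ part, and membership with the least stage is decidable.
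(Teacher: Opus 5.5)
Your proof is correct and is essentially the paper's argument. You take $A=\emptyset'$ and a $\Pi^0_1$-singleton $f\equiv_T\emptyset'$, apply Theorem~\ref{thm:pi01singleton} to $f$, use Theorem~\ref{thm:relopenatn} and Corollary~\ref{cor:reciso}(2) for $A$, and use Turing invariance of computable domination for the final clause. Your explicit settling-time function is already a $\Pi^0_1$-singleton on its own, with no extra coding of witnesses needed, so it makes the cited existence fact self-contained.
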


\begin{proof}
    Let $A:=\emptyset'$ and let $f\in \baire$
    be a $\Pi^0_1$-singleton which is Turing
    equivalent to $\emptyset'$.  By Theorem \ref{thm:pi01singleton}, $f$ strongly relativizes $\Delta^0_1$ at $\baire$. 
    However, $\emptyset'$ is not 
    computably dominated, so it does
    not strongly relativize $\Delta^0_1$
    at $\N$ by Theorem \ref{thm:relopenatn}.
    By Corollary \ref{cor:reciso}, $\emptyset'$ does not
    strongly relativize $\Delta^0_1$ at $\baire$.
    Also note that $f$ is not computably
    dominated since $f\equiv_T \emptyset'$.  
\end{proof}

\begin{question}
    Are there reals which strongly relativize $\Delta^0_1$
    at $\N$ but do not strongly relativize $\Delta^0_1$
    at $\baire$?
\end{question}

\section{Strongly relativizing $\Delta^0_{1+\alpha}$}

\label{sec:arithatn}

We now turn our attention to the self-dual effective
Borel classes of higher order, $\Delta^0_{1+\alpha}$.
We begin by pointing out that for most of these classes,
strongly relativizing them is a property
preserved under Turing equivalence.

\begin{proposition}\label{turingclassespart2}
Let $\Delta$ be a self-dual Kleene pointclass other
than $\Delta^0_1$ and $\Delta^0_2$, let 
$\mathcal{X}$ be a space, and let
$f, g\in \baire$ be Turing equivalent.
Then,
 $f$ strongly relativizes
        $\Delta$ at $\mathcal{X}$ iff $g$ strongly relativizes $\Delta$ at $\mathcal{X}$.
\end{proposition}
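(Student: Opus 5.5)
By symmetry it suffices to show that if $f$ strongly relativizes $\Delta$ at $\mathcal{X}$ and $g\equiv_T f$, then $g$ strongly relativizes $\Delta$ at $\mathcal{X}$. The plan is to imitate the proof of Proposition \ref{prop:transferopen}, replacing the computable total function $F$ by a total function $F:\baire\to\baire$ with $F(g)=f$ that is merely $\Delta$-measurable, in the sense that the graph relation $F(h)(n)=k$ is a $\Delta$ subset of $\baire\times\N^2$. Classes other than $\Delta^0_1,\Delta^0_2$ are closed under such substitutions, and this is exactly why $\Delta^0_1$ and $\Delta^0_2$ are excluded.

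First, the definitional transfer. Since $g\equiv_T f$, every $\Gamma(g)$ set is $\Gamma(f)$ and conversely, so $\Delta(g)=\Delta(f)$. Let $P\subseteq\mathcal{X}$ be $\Delta(g)$. Then $P$ is $\Delta(f)$, so $P=Q_f$ for some $\Delta$ set $Q\subseteq\baire\times\mathcal{X}$.

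Second, the construction of $F$. Fix a Turing functional $\Phi_e$ with $\Phi_e^g=f$. Define $F(h)=\Phi_e^h$ if $\Phi_e^h$ is total, and $F(h)=$ the constant zero sequence otherwise. The relation
\[
F(h)(n)=k \iff [\,\mathrm{Tot}(h)\ \&\ \Phi_e^h(n)\!\downarrow=k\,]\ \vee\ [\,\neg\mathrm{Tot}(h)\ \&\ k=0\,]
\]
has the following complexity: $\mathrm{Tot}(h)$ is $\Pi^0_2$, and $\Phi_e^h(n)\!\downarrow=k$ is $\Sigma^0_1$. Hence the graph is a Boolean combination of $\Pi^0_2$ sets and is in $\Delta^0_3$. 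Consequently $F$ is $\Delta^0_3$-measurable, so preimages of $\Sigma^0_\beta$ sets are $\Sigma^0_{\beta+2}$. This is too weak if we simply pull back a $\Delta^0_\beta$ set $Q$. This step is the main obstacle.

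For large classes there is no difficulty. If $\Delta=\Delta^0_{\gamma}$ with $\gamma\ge\omega$, or if $\Delta=\Delta^1_n$, then the class absorbs the finite shift in level.

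For the finite levels $\Delta^0_m$ with $m\ge3$, I would avoid composing $Q$ with $F$ and instead unfold $Q$ directly. Write $Q(h,x)$ in a normal form in which $h$ enters only through finitely many oracle queries $h(i)$ at the bottom $\Sigma^0_1$ matrix. Then define
\[
R(h,x)\iff Q'(h,x),
\]
where each query $f(i)$ in $Q'$ is replaced by \emph{the value $\Phi_e^h(i)$ if it converges, else $0$}. The bottom clauses of $Q'$ then require one extra quantifier, a search for convergence together with the negated non-convergence case. That extra quantifier can be absorbed into the existing quantifier block once $m\ge3$, because at level $\ge 3$ there is an innermost $\forall\exists$ block available to absorb it. I expect this bookkeeping, showing that $R$ and its complement both stay at level $m$, to be the delicate part.

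An alternative I would try first is to restrict $F$ to the $\Pi^0_2$ set $\mathrm{Tot}$, on which $F$ is continuous with $\Sigma^0_1$ preimages. Then set
\[
R(h,x)\iff [\mathrm{Tot}(h)\ \&\ Q(F(h),x)]\vee[\neg\mathrm{Tot}(h)\ \&\ Q(0^\infty,x)].
\]
This is in $\Delta^0_m$ as soon as $\Pi^0_2\subseteq\Delta^0_m$, that is, for $m\ge3$. The pullback $Q(F(h),x)$ restricted to $\mathrm{Tot}$ stays $\Sigma^0_m$ and $\Pi^0_m$ because $F$ is continuous there. Finally, $R_g=Q_f=P$, which completes the argument.
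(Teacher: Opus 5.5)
Your final construction is exactly the paper's proof: restrict to the $\Pi^0_2$ set $\mathrm{Tot}$, on which $h\mapsto\Phi_e^h$ is continuous, and take $R(h,x)\iff \mathrm{Tot}(h)\ \&\ Q(F(h),x)$; the extra default disjunct with $Q(0^\infty,x)$ is harmless. Because this argument works uniformly for every $\Delta$ other than $\Delta^0_1,\Delta^0_2$ (including the infinite levels and $\Delta^1_n$), you can drop the case split, the total $\Delta^0_3$-measurable extension, the unfolding sketch, and your opening claim that these classes are closed under substitution of total $\Delta$-measurable functions, which is false at finite levels, as your own computation that such an $F$ only pulls $\Sigma^0_\beta$ back to $\Sigma^0_{\beta+2}$ shows.
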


\begin{proof} 
Suppose $f\equiv_T g$ and that $f$ strongly relativizes $\Delta$ at $\mathcal{X}$.
Since $f\leq_T g$, there is $e\in \N$ with $f=\Phi_e^g$.  Let  $F:\baire \rightharpoonup \baire$ be the Turing functional
given by $\Phi_e$.  It is easy to check
that $\text{dom}(F)$ is $\Pi^0_2$, and hence in $\Delta$.  
Let $P\subseteq \mathcal{X}$ be $\Delta(g)$.  Since $g\leq_T f$,
$P$ is also $\Delta(f)$.  Since $f$ strongly relativizes 
$\Delta$ at $\mathcal{X}$, there is a $\Delta$ set $Q\subseteq \baire\times \mathcal{X}$ such that $P=Q_f$.  Define $R\subseteq \baire\times \mathcal{X}$ by
\[
R(h, x) \iff F(h)\downarrow \ \& \ Q(F(h), x),
\]
which is clearly in $\Delta$ and satisfies $P=R_g$.
\end{proof}

Theorem \ref{thm:turingnopreserve} established
that strongly relativizing $\Delta^0_1$ is not
preserved by Turing equivalence.  This leaves
the following question.

\begin{question}
    Does Turing equivalence preserve
    the property of strongly relativizing
    $\Delta^0_2$ (at any space)?
\end{question}

Next, we will transfer results about strongly relativizing
$\Delta^0_1$ to the classes $\Delta^0_\alpha$, $1<\alpha<\omegack$, using relativization
techniques which relate these classes to each
other.  
The $\Delta^0_{1+\alpha}$ subsets of $\N$ are related
to relativized versions of $\Delta^0_1$ by the following
standard fact.

\begin{theorem}\label{keyrelfact}
For any $f\in \baire$ and any computable ordinal $\alpha$,
\[
\Delta^0_{1+\alpha}(f) \restriction \N = 
\Delta^0_1(f^{(\alpha)}) \restriction \N.
\]
\end{theorem}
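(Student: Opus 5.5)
The plan is to prove the relativized form of Post's theorem (the hyperarithmetic hierarchy version) by induction on $\alpha$. The induction will go through the stronger statement
\[
\Sigma^0_{1+\alpha}(f)\restriction \N = \Sigma^0_1(f^{(\alpha)})\restriction \N,
\]
uniformly in indices. Taking complements then gives the corresponding equality for $\Pi$, and intersecting the two yields the displayed equality for $\Delta$. Here $\Sigma^0_1(g)\restriction\N$ means the sets c.e.\ in $g$, so that $\Delta^0_1(g)\restriction \N$ consists exactly of the $g$-computable sets.

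Base case $\alpha=0$. A $\Sigma^0_1(f)$ subset of $\N$ is a section $Q_f$ of an effectively open $Q\subseteq\baire\times\N$. Writing $Q$ in the normal form $(\exists \sigma)(\exists s)[\sigma\sqsubset h \ \& \ n=s \ \& \ R(\sigma,s)]$ with $R$ c.e., we see that $Q_f$ is c.e.\ in $f$. Conversely, if $W$ is c.e.\ in $f$ via a Turing functional, the set $\{(h,n):\Phi_e^{h}(n)\downarrow\}$ is $\Sigma^0_1$ and its $f$-section is $W$.

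Successor step $\alpha\to\alpha+1$. A $\Sigma^0_{1+\alpha+1}(f)$ set is a projection $\exists m$ of a $\Pi^0_{1+\alpha}(f)$ set. By the induction hypothesis (uniformly), such a set is $\Sigma^0_1$ in $(f^{(\alpha)})$ applied to the complement of an $f^{(\alpha)}$-c.e.\ set, i.e.\ it is $\Sigma^0_2(f^{(\alpha)})$ in the arithmetic sense. By the classical relativized Post theorem, this is the same as being $\Sigma^0_1(f^{(\alpha)\prime}) = \Sigma^0_1(f^{(\alpha+1)})$. The reverse inclusion is obtained by reading this chain backwards.

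Limit step $\lambda$. Fix a computable fundamental sequence $\alpha_k\nearrow\lambda$ coming from the presentation used to define $f^{(\lambda)}=\bigoplus_k f^{(\alpha_k)}$ (up to $\equiv_T$, by Spector). A $\Sigma^0_{\lambda}(f)$ set (with $1+\lambda=\lambda$) is a computable union $\bigcup_k P_k$, where each $P_k$ is $\Pi^0_{\beta_k}(f)$ for some $\beta_k<\lambda$, and the codes are computable. Uniformity of the induction hypothesis turns each $P_k$ into a set computable in $f^{(\alpha_{j(k)})}$ for some computable $j$. Hence the union is c.e.\ in $\bigoplus_k f^{(\alpha_k)}$. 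Conversely, $f^{(\lambda)}$ itself is a computable union of columns, each of which is $\Delta^0_{1+\alpha_k+1}(f)$, so it is $\Delta^0_{\lambda}(f)$ uniformly. Substituting membership in $f^{(\lambda)}$ into an oracle computation then shows that every set c.e.\ in $f^{(\lambda)}$ is $\Sigma^0_{\lambda}(f)$: a computation uses only finitely many oracle bits, and each statement about finitely many bits is a finite Boolean combination of $\Delta^0_\lambda(f)$ facts.

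The main obstacle is bookkeeping rather than ideas. Specifically, I must ensure that the induction hypothesis is uniform in codes, so that the limit step can take effective unions, and I must match the indexing $1+\alpha$ at limits with the definition of Borel codes in \cite[7B]{moschovakis2009}. Since this is a standard fact (e.g.\ \cite[Ch.~2]{chong2015}), the intended route is to cite it after verifying the base and successor cases as above.
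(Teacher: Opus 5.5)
The paper does not prove this statement: it records it as a standard fact and refers to \cite{thesis}, where it is established (together with Theorem~\ref{bairenormalform}) for parameters from arbitrary recursively presented spaces, so there is no argument in the paper to compare yours against. Your outline is the standard proof behind that citation and is correct as a sketch: you prove the $\Sigma$-version by induction, use the relativized Post theorem at successors and effective joins at limits, and correctly identify uniformity in ordinal notations as the one real piece of bookkeeping.
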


We comment on the need for the $1+\alpha$.  When $\alpha=n\in \N$, we have $1+n=n+1$ and the theorem says that
$\Delta^0_{n+1}(f)$ subsets of $\N$ are the same as
the $f^{(n)}$-computable subsets.  For infinite $\alpha$,
we have
$1+\alpha = \alpha$, so the $\Delta^0_{\alpha}(f)$ subsets
of $\N$ are exactly the $f^{(\alpha)}$-computable ones.
This case distinction comes from the tradition of
calling the computable sets $\Delta^0_1$ instead of $\Delta^0_0$, and the $1+\alpha$ gimmick helps us
avoid having to write the cases separately.   

Relativizing sets of reals is a bit more complicated, as 
described by the following standard fact from higher-order
computability theory.  

\begin{theorem}\label{bairenormalform}
    $P\subseteq \baire\times \N$ is $\Sigma^0_{1+\alpha}$ if and only if there is
    a $\Sigma^0_1$ relation $R\subseteq \baire\times \N$ such that
    \[
    P(g, n) \iff R(g^{(\alpha)}, n)
    \]
    for all $g\in \baire$ and $n\in \N$.  Moreover,
    for $f\in \baire$, 
    $P\subseteq \baire\times \N$ is $\Sigma^0_{1+\alpha}(f)$ if and only if there is a $\Sigma^0_1$ relation
    $R\subseteq \baire\times \N$ such that
    \[
    P(g, n) \iff R((f\oplus g)^{(\alpha)}, n)
    \]
    for all $g\in \baire$ and $n\in \N$. 
\end{theorem}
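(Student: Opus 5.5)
The proof goes by induction on $\alpha$ along a fixed computable presentation, building the two directions from the definition of $\Sigma^0_{1+\alpha}$ via computable Borel codes. The plan is to prove a uniform version of the first statement, in which the passage between indices is computable. The relativized ``moreover'' clause then follows from the unrelativized one, applied to sets $P'\subseteq\baire\times\baire\times\N$ with $P(g,n)\iff P'(f,g,n)$. Here we use that $(f,g)\mapsto f\oplus g$ is a computable homeomorphism of $\baire^2$ onto $\baire$, so $\Sigma^0_{1+\alpha}$ subsets of $\baire\times\baire\times\N$ correspond to $\Sigma^0_{1+\alpha}$ subsets of $\baire\times\N$.

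For the direction ($\Leftarrow$), it suffices to show that the relation $J_\alpha(g,k,m)\iff g^{(\alpha)}(k)=m$ is $\Delta^0_{1+\alpha}$ uniformly; equivalently, every bit of $g^{(\alpha)}$ is $\Delta^0_{1+\alpha}$ as a function of $g$. Then
\[
R(g^{(\alpha)},n)\iff(\exists\sigma)[\sigma\sqsubset g^{(\alpha)}\ \&\ R_0(\sigma,n)],
\]
with $R_0$ computable, and the right-hand side is an existential quantification over a finite conjunction of $\Delta^0_{1+\alpha}$ conditions, hence $\Sigma^0_{1+\alpha}$. The claim about $J_\alpha$ is proved by induction. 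A successor jump adds one existential quantifier over the previous level, and since $g^{(\beta+1)}(k)=1$ iff $\Phi_k^{g^{(\beta)}}(k)\downarrow$, this is $\Sigma^0_1$ in $g^{(\beta)}$. At limits $\lambda$, $g^{(\lambda)}$ is the effective join of $g^{(\beta_i)}$ along the fundamental sequence, so each bit is $\Delta^0_{\beta_i+1}\subseteq\Delta^0_\lambda$.

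For the direction ($\Rightarrow$), again argue by induction on $\alpha$ using the computable Borel code structure. A $\Sigma^0_{1+\alpha}$ set is a computable union of complements of sets that are $\Sigma^0_{1+\beta_i}$ with $\beta_i<\alpha$; for successors $\alpha=\beta+1$ one may take all $\beta_i=\beta$. By induction each piece is $\{(g,n): R_i(g^{(\beta_i)},n)\}$ with $R_i$ uniformly $\Sigma^0_1$. In the successor case the complement of a $\Sigma^0_1$ relation of $g^{(\beta)}$ is $\Pi^0_1$ in $g^{(\beta)}$, hence computable in $g^{(\beta+1)}$ by a single query. The countable union is then c.e. in $g^{(\beta+1)}$, so it is $\Sigma^0_1$ in $g^{(\beta+1)}$. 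In the limit case each $g^{(\beta_i)}$ is uniformly computable from $g^{(\lambda)}$, since the $\beta_i$ lie below some element of the fundamental sequence; the union is again $\Sigma^0_1$ in $g^{(\lambda)}$.

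The main obstacle is the bookkeeping: making sure everything is \emph{uniform}, so that the induction runs through the computable ordinals via effective transfinite recursion (the recursion theorem), and reconciling the $1+\alpha$ indexing with the finite case. In particular, $\Sigma^0_1$ in $g$ corresponds to $\alpha=0$, with $g^{(0)}=g$. At limit levels one also has to check that Spector's uniqueness theorem is used only up to computable (indeed uniform) reductions. This is what makes the result independent of the chosen presentation of $\alpha$, which matters because $\Sigma^0_1$ relations are closed under computable total substitutions but not under arbitrary Turing equivalence.
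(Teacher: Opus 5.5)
Your overall route (prove the unrelativized statement by induction along the iterated jump, then get the relativized one by passing to $f\oplus g$) is the natural one. It also fits what the paper relies on: the paper gives no proof and cites the author's thesis, where a jump is iterated along Kleene's notations. Your reduction of the ``moreover'' clause and the $(\Leftarrow)$ direction are fine.

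The gap is in $(\Rightarrow)$ at limit levels, together with the uniform induction hypothesis that the successor step also relies on. Under the paper's definition, a $\Sigma^0_\lambda$ set has a computable Borel code $\bigcup_i\neg c_i$ in which the $c_i$ merely have \emph{ordinal ranks} $\beta_i<\lambda$. These ranks are not handed to you as notations on the path of your fixed notation for $\lambda$. In general there is no computable $i\mapsto j_i$ with $\mathrm{rank}(c_i)\le 1+\lambda_{j_i}$. Already for $\lambda=\omega$ there are computable sequences of finite-rank codes whose ranks are not bounded by any computable function of $i$: give $c_i$ rank about $s$ when $\Phi_i(i)$ halts at exactly stage $s$. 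So ``the $\beta_i$ lie below some element of the fundamental sequence'' gives existence, not uniformity. Your limit step as written has no way to pick which $g^{(\lambda_j)}$ to consult for the $i$-th piece. Spector's theorem does not address this; it concerns changing the presentation of $\alpha$, not locating the ranks of subcodes.

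The missing idea is to let the oracle locate the levels. Take as hypothesis: uniformly in a code $c$ and a notation $b\in\mathcal{O}$ with $\mathrm{rank}(c)\le 1+|b|$, one computes an index of a $\Sigma^0_1$ relation $R$ such that $c$ codes $\{(g,n): R(g^{(|b|)},n)\}$, with the jump iterated along $b$.

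An easy effective recursion on $d$ shows that the predicate $\mathrm{rank}(c)\le 1+|d|$ is decidable from $\emptyset^{(|d|+2)}$ (iterated along $d$), uniformly in $c$ and $d$. A successor step adds a universal quantifier over children; a limit step adds a $\forall\exists$ over children and the fundamental sequence.

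At a limit $b$ with fundamental sequence $(b_j)$, the oracle $g^{(|b|)}$ uniformly computes every $\emptyset^{(|b_j|+2)}$. So relative to it you can, for each $i$, search for the least $j$ with $\mathrm{rank}(c_i)\le 1+|b_j|$ (one exists). You then decide $\neg c_i(g,n)$ from $g^{(|b_j|+1)}$ via the hypothesis for $(c_i,b_j)$. The union over $i$ is then $\Sigma^0_1$ in $g^{(|b|)}$, uniformly.

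A smaller point: deciding the complement of a $\Sigma^0_1(g^{(\beta_i)})$ condition needs $g^{(\beta_i+1)}$, not $g^{(\beta_i)}$ as you wrote. This is harmless, since $\beta_i+1<\lambda$.

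Alternatively, you could work with codes labeled by notations below the fixed one. But then you owe the equivalence with the paper's rank-based definition, which is the same problem.
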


Theorem \ref{keyrelfact} and Theorem \ref{bairenormalform} are
both proved for subsets of all recursively presented Polish
metric spaces in \cite{thesis}.

The results and proofs of Section \ref{sec:recursiveatn} on strongly relativizing
$\Delta^0_1$ at $\N$
do not completely relativize to $\Delta^0_{1+\alpha}$,
mostly due to the nature of relativizing subsets
of $\baire$ as described in Theorem \ref{bairenormalform}.
One manifestation of this is that
the analog of Theorem \ref{keyrelfact} for the $\mathcal{S}\Delta^0_{1+\alpha}$
classes is not true in general.  Since $\Delta^0_{1+\alpha}(f)\restriction \N =
\Delta^0_1(f^{(\alpha)})\restriction \N$, one might hope that the equality
remains true when you place an $\mathcal{S}$ in front of both classes.  While one of the inclusions is true, the other is false
in general.

\begin{proposition}
Let $0<\alpha<\omegack$ and $f\in \baire$.
\begin{enumerate}[(i)]
\item $\mathcal{S}\Delta^0_1(f^{(\alpha)})\restriction \N\subseteq \mathcal{S}\Delta^0_{1+\alpha}(f)\restriction \N$.
    \item If $f$ strongly relativizes $\Delta^0_{1+\alpha}$ at $\N$,
    then 
    \[
    \mathcal{S}\Delta^0_1(f^{(\alpha)})\restriction \N\subsetneq \mathcal{S}\Delta^0_{1+\alpha}(f)\restriction \N.
    \]
    \end{enumerate}
\end{proposition}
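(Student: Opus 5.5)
For (i), I would push a $\Delta^0_1$ definition through the $\alpha$-jump operator using Theorem \ref{bairenormalform}. For (ii), I would combine the hypothesis with Theorem \ref{keyrelfact} and the characterization of strong relativization of $\Delta^0_1$ at $\N$ by computable domination (Theorem \ref{relopenatn} together with Theorem \ref{martinmiller}(ii)).

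For (i), let $B\subseteq\N$ be in $\mathcal{S}\Delta^0_1(f^{(\alpha)})$. Then there is a $\Delta^0_1$ set $Q\subseteq\baire\times\N$ with $n\in B\iff Q(f^{(\alpha)},n)$. Define $R\subseteq\baire\times\N$ by $R(g,n)\iff Q(g^{(\alpha)},n)$. Since $Q$ is $\Sigma^0_1$, the first part of Theorem \ref{bairenormalform} shows that $R$ is $\Sigma^0_{1+\alpha}$. Since $\neg Q$ is also $\Sigma^0_1$ and $\neg R(g,n)\iff \neg Q(g^{(\alpha)},n)$, the same theorem shows that $\neg R$ is $\Sigma^0_{1+\alpha}$. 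Thus $R\in\Delta^0_{1+\alpha}$ and $R_f=B$. The only point to check is that $g\mapsto g^{(\alpha)}$ is total, so there is no domain issue. I also need to regard $f^{(\alpha)}\in 2^\N$ as an element of $\baire$, which is harmless by the conventions of Section \ref{sec:basic}.

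For (ii), assume $f$ strongly relativizes $\Delta^0_{1+\alpha}$ at $\N$. By Theorem \ref{keyrelfact},
\[
\mathcal{S}\Delta^0_{1+\alpha}(f)\restriction\N=\Delta^0_{1+\alpha}(f)\restriction\N=\Delta^0_1(f^{(\alpha)})\restriction\N.
\]
Given (i), strictness is therefore equivalent to $\mathcal{S}\Delta^0_1(f^{(\alpha)})\restriction\N\neq\Delta^0_1(f^{(\alpha)})\restriction\N$. In other words, it suffices to show that $A:=f^{(\alpha)}\in 2^\N$ does not strongly relativize $\Delta^0_1$ at $\N$. By Theorem \ref{relopenatn}, this amounts to showing that $A$ is not computably dominated. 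Since $\alpha>0$, we have $A\geq_T f'\geq_T\emptyset'$. So $A$ is noncomputable and Turing comparable with $\emptyset'$, and Theorem \ref{martinmiller}(ii) gives that $A$ is not computably dominated.

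The main step I expect to need care is (i): verifying that Theorem \ref{bairenormalform} really allows substituting the total jump map into an arbitrary $\Sigma^0_1$ relation to obtain a $\Sigma^0_{1+\alpha}$ relation. That is the "if" direction of the theorem, applied separately to $Q$ and $\neg Q$. The argument for (ii) is bookkeeping once the jump is viewed as an element of $2^\N$.
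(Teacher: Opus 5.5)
Your proposal is correct and follows essentially the same route as the paper. For (i) you substitute the total $\alpha$-jump into a $\Delta^0_1$ definition via the normal form theorem for $\Sigma^0_{1+\alpha}$ subsets of $\baire\times\N$. For (ii) you combine the hypothesis with $\Delta^0_{1+\alpha}(f)\restriction\N=\Delta^0_1(f^{(\alpha)})\restriction\N$, and with the fact that $f^{(\alpha)}\geq_T\emptyset'$ is not computably dominated, so by the Martin--Miller theorem and the computable-domination characterization it does not strongly relativize $\Delta^0_1$ at $\N$.
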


\begin{proof}
    (i) Suppose $B\subseteq \N$ is $\mathcal{S}\Delta^0_1(f^{(\alpha)})$. Pick a computable
    $R\subseteq \baire\times \N$ with
    \[
    n\in B \iff R(f^{(\alpha)}, n)
    \]
    By Theorem \ref{bairenormalform}, $Q=\{(g, n)\in \baire\times \N : R(g^{(\alpha)}, n)\}$ is $\Delta^0_{1+\alpha}$ and
    clearly $B=Q_f$.
    
    (ii) Assume $f$ strongly relativizes $\Delta^0_{1+\alpha}$ at $\N$.  Since $\alpha>0$, $\emptyset'\leq_T f^{(\alpha)}$ and $f^{(\alpha)}\in 2^\N$. It follows from 
    Theorem \ref{martinmiller} that $f^{(\alpha)}$ is not computably dominated.  Thus, by
    Theorem \ref{thm:relopenatn}, 
    $f^{(\alpha)}$ does not strongly relativize $\Delta^0_1$ at $\N$.  
    Pick $B\subseteq \N$ which is $\Delta^0_1(f^{(\alpha)})$ but not
    $\mathcal{S}\Delta^0_1(f^{(\alpha)})$.  
    Since $f$ strongly relativizes $\Delta^0_{1+\alpha}$ at $\N$,
    \[
    \Delta^0_1(f^{(\alpha)})\restriction \N=\Delta^0_{1+\alpha}(f)\restriction \N=\mathcal{S}\Delta^0_{1+\alpha}(f)\restriction \N.
    \]
    Thus, $B$ is in $\mathcal{S}\Delta^0_{1+\alpha}(f)$
    but not in $\mathcal{S}\Delta^0_1(f^{(\alpha)})$
\end{proof}

Next, we will establish a sufficient condition for a real
to strongly relativize $\Delta^0_{1+\alpha}$ at $\N$,
but first we need some definitions.  
Let $\mathcal{X}$ be a space and let $\Gamma$ be a 
pointclass. A total
function $f:\mathcal{X}\rightarrow \N$ is a \textbf{$\Gamma$ function} if its graph relation
$\{(x, n)\in \mathcal{X}\times\N: f(x)=n\}$ is
in $\Gamma$.  It is an easy exercise to show
a function $f:\mathcal{X}\rightarrow \N$ is a
$\Sigma^0_{\alpha}$ function if and only if it is a $\Delta^0_\alpha$
function.  

\begin{definition}
    Let $\alpha$ be a computable ordinal.
    $f\in \baire$ is $\Delta^0_{1+\alpha}$-dominated if every
    total $\Delta^0_{1+\alpha}(f)$ function $g:\N\rightarrow \N$
    there is a total $\Delta^0_{1+\alpha}$ function
    $h:\N\rightarrow \N$ with $g(n)\leq h(n)$ for all $n\in\N$. 
\end{definition}

We note the following helpful alternative characterization
of being $\Delta^0_{1+\alpha}$-dominated.  

\begin{proposition} \label{deltadominated}
    Let $\alpha$ be a computable ordinal.
    $f\in\baire$ is $\Delta^0_{1+\alpha}$-dominated if and only if
    $f^{(\alpha)}$ is computably dominated relative to $\emptyset^{(\alpha)}$.
\end{proposition}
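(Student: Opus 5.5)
The plan is to translate both sides of the equivalence into statements about Turing degrees, using Theorem \ref{keyrelfact} twice: once relative to $f$ and once unrelativized. The key observation is that for a total function $g:\N\to\N$, being a $\Delta^0_{1+\alpha}(f)$ function means its graph is a $\Delta^0_{1+\alpha}(f)$ subset of $\N^2$. Since $\N^2$ is computably isomorphic to $\N$, Theorem \ref{keyrelfact} shows this is the same as saying the graph is $f^{(\alpha)}$-computable. For a total function, a computable graph is equivalent to the function itself being computable: search for the unique $k$ with $(n,k)$ in the graph. So the total $\Delta^0_{1+\alpha}(f)$ functions are exactly the total functions $g\leq_T f^{(\alpha)}$.

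Applying the same fact with $f$ replaced by a computable real (say the constant $0$, whose $\alpha$-jump is Turing equivalent to $\emptyset^{(\alpha)}$), the total $\Delta^0_{1+\alpha}$ functions are exactly the total $\emptyset^{(\alpha)}$-computable functions. Here I need that lightface $\Delta^0_{1+\alpha}$ equals $\Delta^0_{1+\alpha}(c)$ for computable $c$, and that $c^{(\alpha)}\equiv_T\emptyset^{(\alpha)}$. Both are standard: the first follows by substituting the computable point $c$ into the parameter slot, and the second is Spector's uniqueness theorem applied to jumps of a computable real.

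With these two identifications, the definition of $\Delta^0_{1+\alpha}$-dominated reads: every total $g\leq_T f^{(\alpha)}$ is pointwise dominated by some total $h\leq_T \emptyset^{(\alpha)}$. This is verbatim the statement that $f^{(\alpha)}$ is computably dominated relative to $\emptyset^{(\alpha)}$, in the sense of Theorem \ref{martinmiller}(i). That gives both directions simultaneously.

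The only step needing care is the first one, converting between functions and their graphs, and checking that Theorem \ref{keyrelfact} applies to subsets of $\N^2$. This is handled by Corollary-style transport along the computable bijection $\N^2\cong\N$, since both the $\Delta^0_{1+\alpha}(f)$ classes and the $f^{(\alpha)}$-computable classes are preserved under computable bijections. I do not expect a genuine obstacle here; the proof is essentially a chain of definitional equivalences.
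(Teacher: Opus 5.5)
Your proposal is correct and follows the paper's proof. The paper's one-line argument is that, for relations on $\N$, Theorem \ref{keyrelfact} gives $\Delta^0_{1+\alpha}(f)=\Delta^0_1(f^{(\alpha)})$ and $\Delta^0_{1+\alpha}=\Delta^0_1(\emptyset^{(\alpha)})$; your treatment of graphs and of $\N^2\cong\N$ just makes explicit what the paper leaves implicit, and since $\emptyset$ is itself the constant-$0$ real, you need no appeal to Spector's theorem to identify $c^{(\alpha)}$ with $\emptyset^{(\alpha)}$.
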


\begin{proof}
    This follows immediately from the fact that for
    relations on $\N$, $\Delta^0_{1+\alpha}(f)=\Delta^0_1(f^{(\alpha)})$ and
    $\Delta^0_{1+\alpha} = \Delta^0_1(\emptyset^{(\alpha)})$.
\end{proof}

We will prove that being $\Delta^0_{1+\alpha}$-dominated
is sufficient to conclude strongly relativizing
$\Delta^0_{1+\alpha}$ at $\N$, but first we point
out the following standard fact that we will use multiple
times.

\begin{lemma} \label{bddsearch}
    Let $\alpha<\omegack$.  If $Q\subseteq \mathcal{X}\times\N$ is $\Delta^0_{1+\alpha}$ and $g:\mathcal{X}\rightarrow \N$ is a $\Delta^0_{1+\alpha}$ function, then the relation $R\subseteq \mathcal{X}$ defined by
    \[
    R(x) \iff (\exists k\leq g(x))~Q(x, k)
    \]
    is also $\Delta^0_{1+\alpha}$.  
\end{lemma}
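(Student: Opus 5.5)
We want to show that $R(x)\iff(\exists k\le g(x))\,Q(x,k)$ is $\Delta^0_{1+\alpha}$. The idea is to rewrite the bounded quantifier in two ways, once as an existential over $\N$ and once as a universal over $\N$, each time using the graph of $g$ instead of $g$ itself. Since $g$ is a $\Delta^0_{1+\alpha}$ function, its graph $G(x,m)\iff g(x)=m$ is in $\Delta^0_{1+\alpha}$. Because $g$ is total, for every $x$ there is exactly one $m$ with $G(x,m)$.

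For the $\Sigma^0_{1+\alpha}$ form we write
\[
R(x)\iff (\exists m)(\exists k)\,[G(x,m)\ \&\ k\le m\ \&\ Q(x,k)].
\]
For the $\Pi^0_{1+\alpha}$ form we write
\[
R(x)\iff (\forall m)\,[\neg G(x,m)\ \vee\ (\exists k\le m)\,Q(x,k)].
\]
Both equivalences are immediate from totality and single-valuedness of $g$: the unique $m$ with $G(x,m)$ is $g(x)$.

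The remaining step is to check closure properties.

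In the first formula, the matrix $G(x,m)\ \&\ k\le m\ \&\ Q(x,k)$ is a finite conjunction of $\Delta^0_{1+\alpha}$ relations. It is therefore $\Delta^0_{1+\alpha}$, in particular $\Sigma^0_{1+\alpha}$. Since $\Sigma^0_{1+\alpha}$ is closed under existential quantification over $\N$, $R$ is $\Sigma^0_{1+\alpha}$.

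For the second formula, we need $S(x,m)\iff(\exists k\le m)\,Q(x,k)$ to be $\Delta^0_{1+\alpha}$. Here the bound is the variable $m$ itself, which is trivially a computable function of $(x,m)$. This bounded quantification is a finite union on each fibre. Concretely, $S$ is $\Sigma^0_{1+\alpha}$ via $(\exists k)[k\le m\ \&\ Q(x,k)]$, and $\neg S$ is $\Sigma^0_{1+\alpha}$ via the corresponding formula with $\neg Q$. Alternatively, we can use the standard closure of the Kleene classes $\Sigma^0_{1+\alpha}$ and $\Pi^0_{1+\alpha}$ under bounded number quantifiers (Moschovakis, 3E/7B). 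Then $\neg G\vee S$ is $\Delta^0_{1+\alpha}$, hence $\Pi^0_{1+\alpha}$, and closure of $\Pi^0_{1+\alpha}$ under $\forall^\N$ gives that $R$ is $\Pi^0_{1+\alpha}$.

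The only point needing care is the closure facts for $\Sigma^0_{1+\alpha}$ and $\Pi^0_{1+\alpha}$ when $\alpha\ge1$: closure under $\exists^\N$, $\forall^\N$ respectively, finite $\&/\vee$, and bounded quantifiers. These follow from the definition of these classes via computable Borel codes; at limit levels they are the standard closure properties recorded in Moschovakis 7B. Alternatively, one can reduce to the $\Sigma^0_1$ case using the normal form of Theorem \ref{bairenormalform}, where the closures are the usual ones for $\Sigma^0_1$.
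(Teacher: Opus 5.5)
Your proof is correct and essentially the paper's: it uses the same two rewritings, $(\exists i)(\exists k)[g(x)=i \ \& \ k\le i \ \& \ Q(x,k)]$ and $(\forall i)[g(x)=i \Rightarrow (\exists k\le i)Q(x,k)]$, together with standard closure properties. One small slip is that $\neg S(x,m)$ is the bounded universal $(\forall k\le m)\neg Q(x,k)$, not $(\exists k)[k\le m \ \& \ \neg Q(x,k)]$, so your ``concrete'' justification for that half fails as written; your fallback to closure of $\Sigma^0_{1+\alpha}$ and $\Pi^0_{1+\alpha}$ under bounded number quantifiers, which the paper also uses tacitly, covers it.
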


\begin{proof}
    This follows from the equivalences
    \begin{align*}
    R(x) &\iff (\exists k, i)[g(x)=i \ \& \ k\leq i \ \& \ Q(x, k)] \\
    & \iff (\forall i)[g(x)=i \implies (\exists k\leq i)Q(x, k)],
    \end{align*}
    along with the standard closure properties for
    $\Delta^0_{1+\alpha}$.  
\end{proof}

\begin{theorem}\label{suffatn}
    Let $\alpha<\omegack$ and let $f\in \baire$.
    If $f$ is $\Delta^0_{1+\alpha}$-dominated, then
    $f$ strongly relativizes $\Delta^0_{1+\alpha}$ at $\N$.
\end{theorem}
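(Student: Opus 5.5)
Given $B\subseteq\N$ in $\Delta^0_{1+\alpha}(f)$, I will imitate the truth-table argument from Section \ref{sec:recursiveatn}, replacing ``computable'' by ``$\Delta^0_{1+\alpha}$'' and using domination to bound a search. By Theorem \ref{keyrelfact}, $B$ is $\Delta^0_1(f^{(\alpha)})$. So I first write $B$ as a $\Delta^0_{1+\alpha}(f)$ relation in a form that can be uniformized in the parameter. By Theorem \ref{bairenormalform} (applied with the parameter space $\baire$ and a dummy variable), there are $\Sigma^0_{1+\alpha}$ relations $P_0,P_1\subseteq\baire\times\N$ with
\[
n\in B\iff P_0(f,n)\iff\neg P_1(f,n).
\]
To make the unbounded search explicit, I will pass to a representation $P_i(g,n)\iff(\exists k)\,R_i(g,n,k)$ with $R_i\subseteq\baire\times\N\times\N$ in $\Pi^0_{<1+\alpha}$ (or, more crudely, simply $\Delta^0_{1+\alpha}$). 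This is available from the normal form for $\Sigma^0_{1+\alpha}$ sets: a countable union of sets of lower rank when $\alpha$ is a successor, and an analogous union when $\alpha$ is a limit.

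Next, I define $g:\N\to\N$ by letting $g(n)$ be the least $k$ such that $R_0(f,n,k)$ or $R_1(f,n,k)$ holds. Such a $k$ always exists because $P_{0f}$ and $P_{1f}$ are complementary. The graph of $g$ is $\Delta^0_{1+\alpha}(f)$, since it involves only $f$-sections of $\Delta^0_{1+\alpha}$ relations and bounded quantifiers. Since $f$ is $\Delta^0_{1+\alpha}$-dominated, there is a total $\Delta^0_{1+\alpha}$ function $h$ with $g(n)\le h(n)$ for all $n$. I then set
\[
Q(u,n)\iff(\exists k\le h(n))\bigl[R_0(u,n,k)\ \&\ (\forall j<k)\neg R_1(u,n,j)\bigr].
\]
By Lemma \ref{bddsearch} (with $\mathcal{X}=\baire\times\N$ and the function $(u,n)\mapsto h(n)$), $Q$ is $\Delta^0_{1+\alpha}$. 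At $u=f$ the search reaches the least witness $g(n)\le h(n)$, and that witness certifies $R_0$ exactly when $n\in B$, which gives $B=Q_f$. Note that $Q$ is total and definable for every $u$; correctness is only needed at $u=f$. This is exactly why bounding the search by $h$ makes the relation $\Delta^0_{1+\alpha}$ rather than merely $\Sigma^0_{1+\alpha}$.

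The main obstacle is the first step: getting $R_i$ of genuinely lower complexity, and in particular in $\Delta^0_{1+\alpha}$ uniformly in the parameter $u\in\baire$, not just at $u=f$. For finite $\alpha=m$ this is routine: $\Sigma^0_{m+1}=\exists^\N\Pi^0_m$, and $\Pi^0_m\subseteq\Delta^0_{m+1}$. For infinite $\alpha$, I would first try the characterization from Theorem \ref{bairenormalform}: $P_i(u,n)\iff R_i'(u^{(\alpha)},n)$ with $R_i'$ a $\Sigma^0_1$ set, so $R_i'(v,n)\iff(\exists k)S_i(v\restriction k,n)$ with $S_i$ computable. I would then take $R_i(u,n,k):\iff S_i(u^{(\alpha)}\restriction k,n)$. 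This relation is $\Delta^0_{1+\alpha}$ because finitely many bits of $u^{(\alpha)}$ are uniformly $\Delta^0_{1+\alpha}$ in $u$. If the identification of the jump hierarchy needs care at limits, I would fall back on the formulation of domination in Proposition \ref{deltadominated} and work with $f^{(\alpha)}$ directly.
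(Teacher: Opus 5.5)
Your proposal is correct and is essentially the paper's argument. You let $g(n)$ be the least stage at which a witness for $n\in B$ or $n\notin B$ appears at the parameter $f$, dominate $g$ by a $\Delta^0_{1+\alpha}$ function $h$, and then bound the search by $h(n)$ uniformly in $u$ via Lemma \ref{bddsearch}; the uniform $\Delta^0_{1+\alpha}$-definability of $u^{(\alpha)}\restriction k$ does the work, exactly as in the paper's $Q(u,n)\iff(\exists k\le h(n))P_0(u^{(\alpha)}\restriction k,n)$. The differences are cosmetic: you start from $\Sigma^0_{1+\alpha}$ relations on $\baire\times\N$ rather than computable $P_0,P_1$ applied to $f^{(\alpha)}$, and your extra clause $(\forall j<k)\neg R_1(u,n,j)$ is harmless but not needed for $Q_f=B$.
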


\begin{proof}
    Suppose $f$ is $\Delta^0_{1+\alpha}$-dominated
    and let $B\in2^\N$ be $\Delta^0_{1+\alpha}(A)$.  Then,
    $B$ is $\Delta^0_1(f^{(\alpha)})$, and so we can 
    find computable $P_0, P_1\subseteq \N^{<\N}\times \N$ which satisfy
    \[
    n\in B \iff (\exists k)P_0(f^{(\alpha)}\restriction k, n)
    \]
    \[
    n\notin B \iff (\exists k)P_1(f^{(\alpha)}\restriction k, n)
    \]
    Define $g:\N\rightarrow \N$ by letting $g(n)$ be
    the least $k$ such that exactly one of 
    ${P_i(f^{(\alpha)}\restriction k, n)}$, $i=0, 1$, holds.  This function is $f^{(\alpha)}$-computable,
    hence it is $\Delta^0_{1+\alpha}(f)$.  Since $f$
    is $\Delta^0_{1+\alpha}$-dominated, there is
    a $\Delta^0_{1+\alpha}$ function $h$ which bounds $g$
    pointwise.  Now, define $Q\subseteq \N^\N\times N$ by
    \[
    Q(u, n) \iff (\exists k\leq h(n))P_0(u^{(\alpha)}\restriction k, n)
    \qquad (u\in \baire, \ \ n\in \N).
    \]
    By Lemma \ref{bddsearch}, $Q$ is $\Delta^0_{1+\alpha}$.
    It is also clear that $B=Q_f$. 
\end{proof}

Next, we show that there are lots of reals which are
$\Delta^0_{1+\alpha}$-dominated and, hence, strongly
relativize $\Delta^0_{1+\alpha}$ at $\N$.
To do this, we will use the results of Martin and Miller
together with the following extension of the
Friedberg jump inversion theorem due to MacIntyre.

\begin{theorem}[\cite{macintyre_1977}] \label{jumpinversion}
   If $1\leq \alpha<\omegack$ and $A\geq_T \emptyset^{(\alpha)}$, then there is
   $B\in 2^\N$ such that $B^{(\alpha)}\equiv_T A$
\end{theorem}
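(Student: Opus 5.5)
The plan is to argue by effective transfinite induction on $\alpha$, along a fixed notation for $\alpha$ in $\mathcal{O}$. By Spector's theorem, $B^{(\alpha)}$ is then well defined up to Turing degree. The base and successor steps use the relativized Friedberg jump inversion theorem: if $C\in 2^\N$ and $A\geq_T C'$, then there is $D\geq_T C$ with $D'\equiv_T A\equiv_T D\oplus C'$. The standard finite-extension proof of Friedberg's theorem relativizes verbatim. This settles $\alpha=1$ directly.

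\textbf{Successor step.} Suppose $\alpha=\beta+1$ and $A\geq_T \emptyset^{(\beta+1)}=(\emptyset^{(\beta)})'$. Apply relativized Friedberg with $C=\emptyset^{(\beta)}$ to get $D\geq_T\emptyset^{(\beta)}$ with $D'\equiv_T A$. By the induction hypothesis for $\beta$ (or trivially if $\beta=0$), there is $B$ with $B^{(\beta)}\equiv_T D$. Then
\[
B^{(\beta+1)}\equiv_T D'\equiv_T A.
\]
This uses only that the jump is degree invariant.

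\textbf{Limit step.} This is the main obstacle. If $\lambda$ is a limit with fundamental sequence $\lambda_0<\lambda_1<\cdots$ given by the notation, we cannot simply compose inversions: there is no last step, and the induction hypothesis does not cohere across the $\lambda_n$. So I would do a direct forcing construction of $B=\bigcup_s\sigma_s$ by finite extensions, carried out computably in $A$. At stage $s$, take the $s$-th pair $(e,n)$ and use the oracle $\emptyset^{(\lambda)}\leq_T A$. This oracle uniformly decides forcing for $\Sigma^0_{\lambda_n}$ sentences, since forcing for $\Sigma^0_\beta$ with $\beta<\lambda$ is $\Sigma^0_\beta$-definable uniformly in $\beta$. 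Use it to extend $\sigma_s$ so that it decides the $e$-th $\Sigma^0_{\lambda_n}$ sentence about $B$. Then append the bit $A(s)$. To make the bit recoverable, the extension should be canonical: for example the least extension deciding the sentence, followed by a padding marker that is chosen in a way determined by the lower-level truth facts. This makes $B$ sufficiently generic, so forcing equals truth for every $\Sigma^0_\beta$ fact with $\beta<\lambda$.

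\textbf{Verification in the limit case.} For $B^{(\lambda)}\leq_T A$: the $e$-th question in $B^{(\lambda)}$ is a $\Sigma^0_{\lambda_n}$ fact about $B$ for some $n$. By genericity it holds iff it is forced by the condition built at the corresponding stage, and $A$ can run the construction. For $A\leq_T B^{(\lambda)}$: the real $B^{(\lambda)}$ computes $\emptyset^{(\lambda)}$ and $B$, and it knows which facts are true of $B$. So it can recompute the canonical sequence $\sigma_s$ and read off the coded bits $A(s)$.

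The delicate point is to set up the forcing relations uniformly in the notation so that $\emptyset^{(\lambda)}$ decides all of them at once. One must also check that $B^{(\lambda)}$, which is essentially the join of the $B^{(\lambda_n)}$, is enough to reconstruct the stages. I would handle this by showing that stage $s$ only depends on facts of level below $\lambda_{s}$.
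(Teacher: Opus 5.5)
The paper does not prove this statement. It quotes it from MacIntyre and uses it as a black box in the proof of Theorem~\ref{thm:arithatn}, so there is no internal argument to compare with. The only question is whether your outline stands on its own, and it does.

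Your successor step is correct: relativized Friedberg, then the induction hypothesis at $\beta$, then degree invariance of the jump. Your limit-stage finite-extension construction is also the right idea and works. $A$ can run it because $\emptyset^{(\lambda)}\le_T A$ answers every extension question of level below $\lambda$ uniformly. $B^{(\lambda)}$ can rerun it because it computes $B\oplus\emptyset^{(\lambda)}$.

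Some of your machinery is unnecessary, and shedding it makes the argument cleaner.

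First, the limit construction works verbatim at every $\alpha$. At $\alpha=\delta+1$ you meet or avoid the sets of strings c.e. in $\emptyset^{(\delta)}$, using $\emptyset^{(\alpha)}$ to answer the extension questions. At $\alpha=1$ this is literally Friedberg's proof. So the transfinite induction and the separate successor step are redundant.

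Second, you can avoid the syntactic forcing relation and its uniform definability lemma, which is the heaviest ingredient you leave unproved.
\begin{itemize}
\item At stage $s=\langle n,e\rangle$, let $\emptyset^{(\lambda_n+1)}\le_T\emptyset^{(\lambda)}$ decide whether $\sigma_s$ has an extension in $W_e^{\emptyset^{(\lambda_n)}}$. If so, take the first such extension enumerated; then append $A(s)$.
\item The resulting $B$ meets or avoids every set of strings c.e. in some $\emptyset^{(\gamma)}$ with $\gamma<\lambda$.
\item An effective transfinite recursion then gives $B^{(\gamma)}\le_T B\oplus\emptyset^{(\gamma)}$ uniformly for $\gamma\le\lambda$. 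At a successor, the inductive reduction shows that $e\in B^{(\gamma+1)}$ iff $B$ meets a certain set of strings c.e. in $\emptyset^{(\gamma)}$. Then $B\oplus\emptyset^{(\gamma+1)}$ searches for an initial segment of $B$ that either lies in that set or has no extension in it, and genericity guarantees the search halts. Limits are immediate.
\item Because the choice of extension is canonical, $B\oplus\emptyset^{(\lambda)}$ recomputes the stages and reads off the coded bits. This needs no padding marker and no appeal to truth facts about $B$.
\end{itemize}
Hence $B^{(\lambda)}\equiv_T B\oplus\emptyset^{(\lambda)}\equiv_T A$, which is slightly stronger than what is stated.

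Finally, watch the index shift in your complexity bookkeeping. For infinite $\beta$, $\Sigma^0_\beta$ questions are c.e. in $\emptyset^{(\beta)}$, so they are decided by $\emptyset^{(\beta+1)}$ rather than $\emptyset^{(\beta)}$. This is harmless below a limit $\lambda$.
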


\begin{theorem}
    \label{thm:arithatn}
    There are continuum-many reals which
    are $\Delta^0_{1+\alpha}$-dominated.  Hence,
    there are continuum-many reals which 
    strongly relativize $\Delta^0_{1+\alpha}$ at $\N$.  
\end{theorem}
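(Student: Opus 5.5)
The plan is to build $\Delta^0_{1+\alpha}$-dominated reals by combining the relativized Martin--Miller theorem with MacIntyre's jump inversion. Proposition \ref{deltadominated} lets us work with $f^{(\alpha)}$ instead of $f$. The case $\alpha=0$ is exactly Theorem \ref{martinmiller}(i): computably dominated reals are $\Delta^0_1$-dominated. So assume $1\leq\alpha<\omegack$.

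First, apply Theorem \ref{martinmiller}(i) relativized to $B=\emptyset^{(\alpha)}$. This gives continuum-many $A>_T\emptyset^{(\alpha)}$ which are $\emptyset^{(\alpha)}$-computably dominated, meaning every $A$-computable function is dominated by an $\emptyset^{(\alpha)}$-computable one. Note that this property depends only on the Turing degree of $A$: if $A'\equiv_T A$, then $A'$ computes the same functions.

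Next, for each such $A$, Theorem \ref{jumpinversion} gives $C_A\in 2^\N$ with $C_A^{(\alpha)}\equiv_T A$. Then $C_A^{(\alpha)}$ is computably dominated relative to $\emptyset^{(\alpha)}$, so by Proposition \ref{deltadominated}, $C_A$ is $\Delta^0_{1+\alpha}$-dominated. Theorem \ref{suffatn} then shows that $C_A$ strongly relativizes $\Delta^0_{1+\alpha}$ at $\N$.

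The main obstacle is showing there are continuum-many distinct $C_A$. The map $A\mapsto C_A$ is injective on Turing degrees, since $C_A=C_{A'}$ forces $A\equiv_T A'$. So it suffices to have continuum-many pairwise Turing-inequivalent such $A$. Each Turing degree is countable, so continuum-many reals $A$ already span continuum-many degrees, and we pick one $A$ from each degree.

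If one prefers not to rely on the counting: the Martin--Miller construction produces a perfect tree of such reals, and distinct paths can be arranged to be pairwise Turing incomparable. The cardinality argument above suffices, though. Hence we obtain continuum-many $\Delta^0_{1+\alpha}$-dominated reals, and by Theorem \ref{suffatn} continuum-many reals strongly relativizing $\Delta^0_{1+\alpha}$ at $\N$.
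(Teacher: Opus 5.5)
Your proof is correct and follows the paper's argument essentially verbatim: relativize Martin--Miller to $\emptyset^{(\alpha)}$, take continuum-many such $A$ in distinct Turing degrees, apply MacIntyre's jump inversion, and conclude via Proposition~\ref{deltadominated} and Theorem~\ref{suffatn}. Your separate treatment of $\alpha=0$ is a small addition, since Theorem~\ref{jumpinversion} as stated requires $\alpha\geq 1$ and the paper leaves that trivial case implicit.
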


\begin{proof}
    By Theorem \ref{martinmiller}, there are continuum-many $A\in 2^\N$
    such that $A>_T \emptyset^{(\alpha)}$ and $A$ is computably dominated relative to $\emptyset^{(\alpha)}$.  Of course, since Turing degrees are countable,
    there are continuum many such $A$ with distinct
    Turing degrees.  By Theorem \ref{jumpinversion}, for each such $A$ there is $B$
    such that $B^{(\alpha)}\equiv_T A$.  Since they
    have distinct $\alpha$-jumps, these $B$'s have
    distinct Turing degrees, so there are continuum
    many.  By Proposition
    \ref{deltadominated}, each $B$ is
    $\Delta^0_{1+\alpha}$-dominated.
\end{proof}

We will see later in Theorem \ref{thm:alphadomnotsuff} that being $\Delta^0_{1+\alpha}$-dominated is not a necessary condition.
Next, we will establish a sufficient condition for strongly relativizing
$\Delta^0_{1+\alpha}$ at spaces other than $\N$.  

\begin{theorem}\label{thm:pi0alphasingleton}
    Let $\alpha>0$ be a computable ordinal.  
    If $f\in \baire$ is a $\Pi^0_{1+\alpha}$-singleton, then $f$ strongly relativizes
    $\Delta^0_{1+\alpha}$ (at every space).  
\end{theorem}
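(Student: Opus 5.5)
The idea is to make the parameter $f$ appear only through the $\Delta^0_{1+\alpha}$ set $\{f\}$. Let $S:=\{f\}$; by hypothesis $S$ is a $\Pi^0_{1+\alpha}$ subset of $\baire$, and since $\alpha>0$ it is also $\Sigma^0_{1+\alpha}$. Indeed, $h\in S$ iff $(\forall n)(\exists k)[k=f(n)\ \& \ h(n)=k]$, and in fact iff $(\forall n)[h(n)=f(n)]$. The graph of $f$, i.e.\ $\{(n,k): f(n)=k\}$, is $\Sigma^1_1$-ish in general, so this needs care; I will come back to it. First assume $S\in\Delta^0_{1+\alpha}$.

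Given a $\Delta^0_{1+\alpha}(f)$ set $P\subseteq\mathcal{X}$, choose $\Sigma^0_{1+\alpha}$ sets $P_0,P_1\subseteq\baire\times\mathcal{X}$ with $P=P_{0f}$ and $\neg P=P_{1f}$. Define
\[
Q(h,x)\iff h\in S\ \&\ P_0(h,x).
\]
$Q$ is $\Sigma^0_{1+\alpha}$ as a conjunction of $\Sigma^0_{1+\alpha}$ sets. It is also $\Pi^0_{1+\alpha}$, because
\[
\neg Q(h,x)\iff h\notin S\ \vee\ P_1(h,x).
\]
To check this: for $h\ne f$ both sides are true, and for $h=f$ it reduces to $\neg P(x)\iff P_1(f,x)$. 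So $Q\in\Delta^0_{1+\alpha}$ and $Q_f=P$. This works at every space $\mathcal{X}$, since it uses only the closure of $\Sigma^0_{1+\alpha}$ under $\&$ and $\vee$ and the fact that $S$ is $\Delta^0_{1+\alpha}$. The same template as Theorem \ref{thm:pi01singleton} is much simpler here because we can afford $\neg S$ outright.

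\textbf{Main obstacle:} showing $\{f\}$ is $\Sigma^0_{1+\alpha}$, i.e.\ that a $\Pi^0_{1+\alpha}$-singleton is automatically a $\Delta^0_{1+\alpha}$-singleton when $\alpha\ge1$. My plan is to show $f\in\Delta^0_{1+\alpha}$ as a real, meaning its graph is a $\Delta^0_{1+\alpha}$ subset of $\N^2$. Then
\[
h\in S\iff(\forall n)(\forall k)[f(n)=k\Rightarrow h(n)=k],
\]
and also $h\in S\iff(\forall n)(\exists k)[f(n)=k\ \&\ h(n)=k]$; both are $\Pi^0_{1+\alpha}$. The complement is
\[
h\notin S\iff(\exists n)(\exists k)[f(n)=k\ \&\ h(n)\ne k],
\]
where the matrix is a $\Delta^0_{1+\alpha}$ relation on $\N^2$ conjoined with a clopen condition. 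For $\alpha\ge1$ this is $\Sigma^0_{1+\alpha}$. So $S$ is $\Delta^0_{1+\alpha}$.

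It remains to see that a $\Pi^0_{1+\alpha}$-singleton is a $\Delta^0_{1+\alpha}$ real; this is where $\alpha>0$ matters. I would use Theorem \ref{bairenormalform}: $h\in S\iff\neg R(h^{(\beta)})$ for a $\Sigma^0_1$ relation $R$, where $1+\beta$ is the level just below the $\Pi$ quantifier. Thus $f^{(\beta)}$ lies on a $\Pi^0_1$ class of jumps that is unique. Then $f(n)=k$ iff for every $g$ in a compactly/hyperarithmetically bounded search the correct branch is isolated. Concretely, I would first try the standard fact that a $\Pi^0_{1+\alpha}$-singleton is computable from $\emptyset^{(\alpha)}$ (the analogue of $\Pi^0_1$-singletons being $\Delta^1_1$, refined by level). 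This would give $f(n)=k$ as $\Delta^0_1(\emptyset^{(\alpha)})=\Delta^0_{1+\alpha}$ by Theorem \ref{keyrelfact}. If that bound fails, I would fall back on the weaker statement actually needed: $\{(h,n,k):h(n)\ne k\}\cap(\baire\setminus S)$ can be handled by writing $\neg S$ as $\Sigma^0_{1+\alpha}$ directly from the $\Pi$ definition, which by definition is already $\Sigma^0_{1+\alpha}$. On reflection this is trivial: $\neg S$ is the complement of a $\Pi^0_{1+\alpha}$ set, hence $\Sigma^0_{1+\alpha}$. In the displayed equivalence for $\neg Q$ only $\neg S$ is needed as $\Sigma^0_{1+\alpha}$, while $Q$ needs $S$ as $\Sigma^0_{1+\alpha}$. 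So I would instead write
\[
Q(h,x)\iff \neg\bigl(h\notin S\ \vee\ P_1(h,x)\bigr)=h\in S\ \&\ \neg P_1(h,x),
\]
which is $\Pi^0_{1+\alpha}$, and $Q(h,x)\iff h\in S\ \&\ P_0(h,x)$, where $S\in\Pi^0_{1+\alpha}$ and $P_0\in\Sigma^0_{1+\alpha}$. The real obstacle is therefore showing $S\in\Sigma^0_{1+\alpha}$, equivalently bounding the complexity of $f$'s graph, and the $\emptyset^{(\alpha)}$-computability of $f$ is where I expect the work to lie.
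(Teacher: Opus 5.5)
There is a genuine gap, and it is the step you flagged. Your set $Q=\{(h,x): h=f\ \&\ P_0(h,x)\}$ is $\Sigma^0_{1+\alpha}$ only if $\{f\}$ is. Taking $P=\mathcal{X}$ gives $Q=\{f\}\times\mathcal{X}$, and a $\Pi^0_{1+\alpha}$-singleton need not be a $\Sigma^0_{1+\alpha}$ set. For instance, take $\alpha=1$ and let $f$ be the characteristic function of $\emptyset'$. Then $\{f\}$ is $\Pi^0_2$. If it were $\Sigma^0_2$, it would be a countable union of $\Pi^0_1$ sets, one of which would have to be $\{f\}$ itself. So $f$ would be a $\Pi^0_1$-singleton lying in $2^\N$, hence computable (remark (1) after Theorem~\ref{thm:pi01singleton}). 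Thus for $\mathcal{X}=\N$ and $P=\N$ your $Q=\{f\}\times\N$ is not $\Delta^0_2$.

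Your plan for the obstacle fails on both counts as well. First, $\Pi^0_{1+\alpha}$-singletons need not be computable from $\emptyset^{(\alpha)}$: the $\Pi^0_1$-singleton $f\equiv_T\emptyset^{(\alpha+1)}$ used in the proof of Theorem~\ref{thm:alphadomnotsuff} is a counterexample. Second, even when $f\leq_T\emptyset^{(\alpha)}$, as for $f=\emptyset'$ above, your displayed formulas only show that $\{f\}$ is $\Pi^0_{1+\alpha}$. Exhibiting its complement as $\Sigma^0_{1+\alpha}$ restates that same fact; it does not supply the missing $\Sigma^0_{1+\alpha}$ half.

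The missing idea is a construction that needs only $\baire\setminus\{f\}\in\Sigma^0_{1+\alpha}$, which is the half you correctly observed is free. The paper gets this by transplanting the race from Theorem~\ref{thm:pi01singleton}. For $\mathcal{X}=\baire$:
\begin{itemize}
\item write $\{f\}=\{h: h^{(\alpha)}\in[T]\}$ with $T$ a computable tree;
\item put $P_0,P_1$ in the normal form $(\exists n)R_i((h\oplus g)^{(\alpha)}\restriction n)$ of Theorem~\ref{bairenormalform};
\item let $Q(h,g)$ assert that at the least $n$ at which $h^{(\alpha)}\restriction n\notin T$, or $R_0$ fires, or $R_1$ fires, it is $R_0$ that fires while $h^{(\alpha)}\restriction n\in T$.
\end{itemize}
The race always terminates. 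For $h=f$ this is because $P_{0f}\cup P_{1f}$ is everything; for $h\neq f$ it is because $h^{(\alpha)}$ leaves $T$. So $Q$ and $\neg Q$ are both $\Sigma^0_1$ in $(h\oplus g)^{(\alpha)}$, hence $\Sigma^0_{1+\alpha}$, and $Q_f=P$.

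In your own notation the same idea is one line. The sets $P_0$, $P_1$ and $(\baire\setminus\{f\})\times\mathcal{X}$ are $\Sigma^0_{1+\alpha}$ and cover $\baire\times\mathcal{X}$. Apply the reduction property of $\Sigma^0_{1+\alpha}$ (proved by exactly this least-witness race) to $P_0$ and $P_1\cup((\baire\setminus\{f\})\times\mathcal{X})$. This gives complementary $\Sigma^0_{1+\alpha}$ sets $P_0^*\subseteq P_0$ and $P_1^*$ with $(P_1^*)_f\subseteq\neg P$. Hence $P_0^*\in\Delta^0_{1+\alpha}$ and $(P_0^*)_f=P$, at every space at once. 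This argument never uses $\alpha>0$ (for $\alpha=0$ it reproves Theorem~\ref{thm:pi01singleton}), so $\alpha>0$ is not what makes the singleton usable, contrary to your guess.
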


\begin{proof}
    Suppose $f\in \baire$ is a $\Pi^0_{1+\alpha}$-singleton.  We begin by showing that
    $f$ strongly relativizes $\Delta^0_{1+\alpha}$
    at $\baire$. Pick a $\Pi^0_1$ set $C\subseteq \baire$ such
    that for all $g\in \baire$,
    \[
    g=f \iff C(g^{(\alpha)})
    \]
    Let $T$ be a computable tree such that $[T]=C$.

    Let $P\subseteq \baire$ be 
    $\Delta^0_{1+\alpha}(f)$.  
    Then, we can pick computable $R_0, R_1\subseteq \N^{<\N}$, both closed upwards, 
    such that
    \[
    P(g) \iff (\exists n)R_0((f\oplus g)^{(\alpha)}\restriction n),
    \]
    \[
    \neg P(g) \iff (\exists n)R_1((f\oplus g)^{(\alpha)}\restriction n).
    \]
    Now, define $Q\subseteq \baire^2$ by
    \begin{multline*}
    Q(h, g) \iff (\exists n)[h^{(\alpha)}\restriction n \in T \ \& \ 
    R_0((h\oplus g)^{(\alpha)}\restriction n) \\ 
    \ \& \ (\forall i<n) \neg R_1((h\oplus g)^{(\alpha)}\restriction i)]
    \end{multline*}
    Then, the proof proceeds in the same manner 
    as in 
    Theorem \ref{thm:pi01singleton}, showing
    that $Q$ is $\Delta^0_{1+\alpha}$ and
    that $P=Q_f$.  

    So, we have shown that $f$ strongly 
    relativizes $\Delta^0_{1+\alpha}$ at $\baire$.
    To extend this to subsets 
    of any space $\mathcal{X}$, we only need
    to use the fact that the representation
    of $\Sigma^0_{1+\alpha}$ sets, $\alpha>0$, we used
    extends to every $\mathcal{X}$.  This is proved in \cite{thesis}.  We quickly describe the notion
    of Turing jump used there.  For $x\in \mathcal{X}$, we define
    the Turing jump of $x$ to be
    \[
    J(x) := \{e\in \N : G(e, x)\},
    \]
    where $G\subseteq\N\times \mathcal{X}$ is some
    good $\N$-parametrization of the $\Sigma^0_1$
    subsets of $\mathcal{X}$.  We iterate this
    jump along Kleene's ordinal notations, as usual,
    and get the desired representation for $\Sigma^0_{1+\alpha}$ sets.  
\end{proof}

Now, we can prove that, for $0<\alpha<\omegack$, being $\Delta^0_{1+\alpha}$-dominated
is not necessary to strongly relativize $\Delta^0_{1+\alpha}$, even at $\N$. 

\begin{theorem}\label{thm:alphadomnotsuff} 
    For any $0<\alpha<\omegack$, there is
    $f\in \baire$ which strongly 
    relativizes $\Delta^0_{1+\alpha}$ at every
    space but is not $\Delta^0_{1+\alpha}$-dominated.
\end{theorem}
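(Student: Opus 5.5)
The plan is to take $f$ to be a $\Pi^0_{1+\alpha}$-singleton of suitable Turing degree and appeal to Theorem \ref{thm:pi0alphasingleton}. That theorem already guarantees that any such $f$ strongly relativizes $\Delta^0_{1+\alpha}$ at every space. So everything reduces to finding a $\Pi^0_{1+\alpha}$-singleton that is \emph{not} $\Delta^0_{1+\alpha}$-dominated. By Proposition \ref{deltadominated}, this means showing that $f^{(\alpha)}$ is not computably dominated relative to $\emptyset^{(\alpha)}$.

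The natural candidate is a $\Pi^0_1$-singleton (hence also a $\Pi^0_{1+\alpha}$-singleton, since $\Pi^0_1\subseteq\Pi^0_{1+\alpha}$) which is Turing equivalent to $\emptyset^{(\beta)}$ for a computable ordinal $\beta$ large enough that $\emptyset^{(\beta)}\geq_T\emptyset^{(\alpha+1)}$. For instance, take $\beta=\alpha+1$, or any larger computable ordinal for which the cited fact \cite[Theorem 2.1.4 and Proposition 2.1.5]{chong2015} supplies a $\Pi^0_1$-singleton $f\in\baire$ with $f\equiv_T\emptyset^{(\beta)}$. Then $f^{(\alpha)}\equiv_T\emptyset^{(\beta+\alpha)}$, or at least $f^{(\alpha)}\geq_T \emptyset^{(\alpha+1)} = (\emptyset^{(\alpha)})'$. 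Iterated jumps respect Turing equivalence by Spector's theorem, so this degree computation is justified.

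The remaining step is to show that a real $A:=f^{(\alpha)}$ with $A\geq_T(\emptyset^{(\alpha)})'$ is not computably dominated relative to $B:=\emptyset^{(\alpha)}$. This is Theorem \ref{martinmiller}(ii) relativized to $B$. Directly: let $g(n)$ be the stage by which the computation $\Phi^B_n(n)$ converges, and set $g(n)=0$ if it diverges. Then $g\leq_T B'\leq_T A$. If some $B$-computable $h$ dominated $g$, then $B'$ would be $B$-computable, since $n\in B'$ iff $\Phi^B_n(n)$ converges within $h(n)$ steps. That contradicts $B'\not\leq_T B$. Hence $f$ is not $\Delta^0_{1+\alpha}$-dominated, while Theorem \ref{thm:pi0alphasingleton} gives strong relativization at every space.

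The main obstacle is the bookkeeping in the jump computation $f^{(\alpha)}\geq_T\emptyset^{(\alpha+1)}$, especially for infinite $\alpha$, where jumps are taken along notations. I would avoid ordinal addition subtleties by choosing the singleton to have degree exactly $\emptyset^{(\alpha+1)}$; the paper's remark (2) covers every $\emptyset^{(\gamma)}$. I would then argue only the inequality $f^{(\alpha)}\geq_T f\geq_T\emptyset^{(\alpha+1)}$, which needs no ordinal arithmetic, since $f\leq_T f^{(\alpha)}$ trivially.
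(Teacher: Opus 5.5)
Your proposal is correct and essentially matches the paper's proof: the paper also takes a $\Pi^0_1$-singleton $f\equiv_T\emptyset^{(\alpha+1)}$ and invokes Theorem \ref{thm:pi0alphasingleton} for strong relativization. It then uses the halting-time function for $\emptyset^{(\alpha)}$, which is $f$-computable and hence $f^{(\alpha)}$-computable, together with Proposition \ref{deltadominated} to show $f$ is not $\Delta^0_{1+\alpha}$-dominated. Your final choice to use only $f\leq_T f^{(\alpha)}$, rather than the hedged ordinal-sum computation in your second paragraph, is exactly how the paper sidesteps the jump bookkeeping.
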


\begin{proof}
    Fix $0<\alpha<\omegack$.   As mentioned in
    Section \ref{sec:recursiveatn},  there is a $\Pi^0_1$-singleton $f\in \baire$ with $f\equiv_T \emptyset^{(\alpha+1)}$.  By Theorem 
    \ref{thm:pi0alphasingleton}, $f$ strongly
    relativizes $\Delta^0_{1+\alpha}$ at every space.
    Now, it is enough to show that $f$ is not computably
    dominated relative to $\emptyset^{(\alpha)}$;
    indeed, this implies that $f^{(\alpha)}$
    is not computably dominated relative $\emptyset^{(\alpha)}$, hence $f$ is
    not $\Delta^0_{1+\alpha}$-dominated by Proposition
    \ref{deltadominated}.

    Define $g:\N\rightarrow \N$ by
    \[
    g(e) = \begin{cases} (\text{least} \ k) \Phi_{e, k}^{\emptyset^{(\alpha)}\restriction k}(e)\downarrow 
    & \text{if $e\in \emptyset^{(\alpha+1)}$} \\
    0 & \text{otherwise,}
    \end{cases}
    \]
    where $\Phi_e$ is a standard effective enumeration
    of oracle machines.
    $g$ is clearly $\emptyset^{(\alpha+1)}$-computable, hence $f$-computable.  However,
    $g$ is not dominated by a $\emptyset^{(\alpha)}$-computable function $h$ , since otherwise 
    $\emptyset^{(\alpha)}$ could compute its own
    halting problem by running $\Phi_e^{\emptyset^{(\alpha)}}(e)$ for $h(e)$ steps.  
\end{proof}

The following is left open.

\begin{question}
Is there a ``natural'' condition which is necessary
and sufficient for strongly relativizing $\Delta^0_{1+\alpha}$ at $\N$?
at $\baire$?
\end{question}

The next result establishes a necessary condition
for strongly relativizing $\Delta^0_{1+\alpha}$ at $\N$.

\begin{theorem}
    If $f\in \baire$ computes $\mathcal{O}$, then 
    $f$ does not strongly relativize any $\Delta^0_{1+\alpha}$,
    $\alpha<\omegack$, at $\N$. 
\end{theorem}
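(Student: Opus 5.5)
A diagonalization in the style of the proof of Theorem \ref{thm:hyp} (iii)$\Rightarrow$(i) should work here. The one change is that a good coding of $\Delta^1_1$ sets is replaced by a coding of $\Delta^0_{1+\alpha}$ subsets of $\baire\times\N$ by computable Borel codes.

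Fix $\alpha<\omegack$ and suppose $f\ge_T\mathcal{O}$. I will produce $B\subseteq\N$ with $B\in\Delta^0_{1+\alpha}(f)$ but $B\notin\mathcal{S}\Delta^0_{1+\alpha}(f)$. In fact I aim for the stronger $B\le_T f$, so that $B\in\Delta^0_1(f)\subseteq\Delta^0_{1+\alpha}(f)$.

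The key input is that $\mathcal{O}$, and hence $f$, can compute the following data.
\begin{enumerate}[(a)]
\item An enumeration $(c_e)_{e\in\N}$ of computable Borel codes of rank $\le 1+\alpha$ (or of level $\le 1+\alpha$ in both $\Sigma$ and $\Pi$ form). Every $\Delta^0_{1+\alpha}$ set $Q\subseteq\baire\times\N$ is named by some $e$ via a pair of indices for a $\Sigma^0_{1+\alpha}$ definition and a $\Pi^0_{1+\alpha}$ definition that agree everywhere.
\item The ability to decide membership $(f,n)\in Q_e$ uniformly in $e$ and $n$, given $f$ as an extra oracle.
\end{enumerate}

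For (a), the set of pairs $(a,b)$ such that $a$ is a $\Sigma^0_{1+\alpha}$ index, $b$ is a $\Pi^0_{1+\alpha}$ index, and the two define the same subset of $\baire\times\N$ is $\Pi^1_1$. It is therefore computable from $\mathcal{O}$, so $f$ can list exactly the valid codes $e_0,e_1,\dots$.

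For (b), evaluating a $\Sigma^0_{1+\alpha}$ set at the point $(f,n)$ amounts to evaluating a $\Sigma^0_1$ relation at $(f\oplus\ldots)^{(\alpha)}$, by Theorem \ref{bairenormalform}. So it suffices to see that $f$ computes $f^{(\alpha+1)}$ uniformly. This holds because $f\ge_T\mathcal{O}$ and $\mathcal{O}$ computes $\emptyset^{(\beta)}$ for every $\beta<\omegack$; relativized, $f^{(\beta)}\le_T f\oplus\mathcal{O}^{f}$. That is not directly $f$, so I need the fact that $f\ge_T\mathcal{O}$ gives $f^{(\beta)}\le_T f'$ only.

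This suggests a cleaner route: avoid jumps of $f$ altogether and simply let $B$ be $\Delta^1_1(f)$ and arithmetic in $f$, namely $B\le_T f^{(\alpha+1)}$. For $\alpha$ infinite that is still $\Delta^0_{1+\alpha+2}(f)$, which is wrong. So I do need the direct version.

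Instead I evaluate membership hyperarithmetically. Since the relation ``$(g,n)\in Q_e$'' is $\Delta^1_1$ uniformly on valid codes, the map $n\mapsto Q_{e_n}(f,n)$ is $\Delta^1_1(f)$. I then use the fact that the diagonal set only needs to be $\Delta^0_{1+\alpha}(f)$, not computable in $f$.

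The cleaner fix is to replace $f$ in the diagonal by its jump: set
\[
n\in B \iff \neg Q_{e_n}(f,n).
\]
Evaluating $Q_{e_n}(f,n)$ is $\Sigma^0_1\big((f\oplus \text{code})^{(\alpha)}\big)$ and $\Pi^0_1$ of the same. The code $e_n$ is computable from $f$, so $B$ is $\Delta^0_1(f^{(\alpha)})=\Delta^0_{1+\alpha}(f)$ by Theorem \ref{keyrelfact}. For this I need uniformity: substituting an $f$-computable index into the normal form of Theorem \ref{bairenormalform} stays within $\Sigma^0_{1+\alpha}(f)$ and $\Pi^0_{1+\alpha}(f)$. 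This uses a universal $\Sigma^0_{1+\alpha}$ set, and the two definitions agree because each $e_n$ is valid.

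Finally, diagonalize. If $B=Q_f$ with $Q\in\Delta^0_{1+\alpha}$, then $Q=Q_{e_n}$ for some $n$, and
\[
n\in B\iff Q(f,n)\iff \neg Q(f,n),
\]
a contradiction.

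I expect the main obstacle to be the uniformity step: showing that the universal sets for $\Sigma^0_{1+\alpha}$ and $\Pi^0_{1+\alpha}$ relativize so that plugging in the $f$-computable sequence $(e_n)$ preserves both classes.
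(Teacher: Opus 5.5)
Your final argument is correct and is the paper's proof. You use $\mathcal{O}\le_T f$ to list the $\Pi^1_1$ set of valid $\Sigma^0_{1+\alpha}$/$\Pi^0_{1+\alpha}$ index pairs as $e_0,e_1,\dots$, put $n\in B\iff\neg Q_{e_n}(f,n)$, observe $B\in\Delta^0_1(f^{(\alpha)})=\Delta^0_{1+\alpha}(f)$, and diagonalize; the paper likewise reruns the diagonalization of Theorem~\ref{thm:hyp} with the $\Pi^1_1$ set $D$ of complementary index pairs for an $\N$-parametrization of $\Sigma^0_{1+\alpha}$.

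Delete the abandoned detours, though, because they are false for $\alpha\ge 1$:
\begin{itemize}
\item The goal $B\le_T f$ is unattainable. Any $B=\Phi_e^f$ is the $f$-section of the $\Sigma^0_1$, hence $\Delta^0_2$, set $\{(g,n):\Phi_e^g(n)\downarrow=1\}$, so it already lies in $\mathcal{S}\Delta^0_{1+\alpha}(f)$.
\item Item (b), and the claims that $f$ computes $f^{(\alpha+1)}$ or that $f^{(\beta)}\le_T f'$, fail because no real computes its own jump. For example, the $\Delta^0_2$ set $\{(g,n): n\in g'\}$ has $f$-section $f'$.
\end{itemize}
The ``uniformity'' step you flag as the main obstacle is just closure of $\Sigma^0_{1+\alpha}(f)$ and $\Pi^0_{1+\alpha}(f)$ under $\exists^\N$, $\forall^\N$, and conjunction with the $f$-computable relation $e_n=m$.
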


\begin{proof}
    Suppose $f\in \baire$ computes $\mathcal{O}$.
    Let $G\subseteq \N\times \baire\times $ be an $\N$-parameterization
    for $\Sigma^0_{1+\alpha}$.  
    Define $D$ to be the set of $e\in \N$ such that $G_{(e)_0}=\neg G_{(e)_1}$,
    where $e\mapsto ((e)_0, (e)_1)$ is some computable bijection from 
    $\N$ to $\N^2$.  Easily, $D$ is $\Pi^1_1$, hence $f$ computes $D$.
    From here, we can replicate the diatonalization
    argument used in the proof of
    (ii)$\Rightarrow$(iii) in Theorem \ref{thm:hyp} to construct
    a subset of $\N$ which is $\Delta^0_{1+\alpha}(f)$ but not
    $\mathcal{S}\Delta^0_{1+\alpha}(f)$.  
\end{proof}

We end by pointing out one last application of these ideas.
A \textbf{nice coding} of $\Delta^0_{1+\alpha}$ subsets of $\baire$ is 
a $\Delta^1_1$ set of integers $D$ together with 
two $\Sigma^0_{1+\alpha}$ subsets $Q^+, Q^-$ of $\N\times \baire$ such that:
(1) for every $e\in D$, $Q^+ = \neg Q^-$ and (2) every $\Delta^0_{1+\alpha}$
set $P\subseteq \N$ is equal to some $Q^+_e$.

\begin{theorem}
    Let $\alpha$ be a computable ordinal.  
    There is no nice coding of $\Delta^0_{1+\alpha}$ subsets
    of $\baire$. 
\end{theorem}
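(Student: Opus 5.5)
Argue by contradiction, reusing the diagonalization from (iii)$\Rightarrow$(i) of Theorem \ref{thm:hyp}. The role of $\mathcal{O}$ there will be played by a real that computes an enumeration of $D$ and strongly relativizes $\Delta^0_{1+\alpha}$. Suppose $(D, Q^+, Q^-)$ is a nice coding. I read clause (2) as referring to $\Delta^0_{1+\alpha}$ subsets of $\baire$; the $\N$ in the definition looks like a slip. Since $\baire$ and $\baire\times\N$ are computably isomorphic, composing with that isomorphism gives a nice coding of the $\Delta^0_{1+\alpha}$ subsets of $\baire\times\N$. So I may take $Q^+, Q^-\subseteq \N\times\baire\times\N$ to be $\Sigma^0_{1+\alpha}$, with $Q^+_e=\neg Q^-_e$ for $e\in D$, and every $\Delta^0_{1+\alpha}$ set $R\subseteq\baire\times\N$ equal to some $Q^+_e$ with $e\in D$.

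Since $D$ is $\Delta^1_1$ and nonempty, there is a $\Delta^1_1$ function $h:\N\to\N$ whose range is $D$ and which hits each element of $D$ infinitely often (at least once suffices). By remark (2) after Theorem \ref{thm:pi01singleton}, every $\Delta^1_1$ real is computed by a $\Pi^0_1$-singleton. Fix a $\Pi^0_1$-singleton $f\in\baire$ with $h\leq_T f$. Such an $f$ is also a $\Pi^0_{1+\alpha}$-singleton. Therefore $f$ strongly relativizes $\Delta^0_{1+\alpha}$ at every space:
\begin{itemize}
\item for $\alpha>0$, by Theorem \ref{thm:pi0alphasingleton};
\item for $\alpha=0$, by Theorem \ref{thm:pi01singleton} at $\baire$ together with Corollary \ref{cor:reciso}, which transfers the property to $\baire\times\N$.
\end{itemize}

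Now define $B\subseteq\N$ by
\[
n\in B \iff \neg Q^+(h(n), f, n).
\]
Since $h(n)\in D$ for every $n$, we also have
\[
n\in B\iff Q^-(h(n),f,n).
\]
Both conditions are $\Sigma^0_{1+\alpha}(f)$ after substituting the $f$-computable function $h$. For instance, $n\in B$ iff $(\exists e)[h(n)=e \ \& \ Q^-(e,f,n)]$, where the relation $h(n)=e$ is $\Delta^0_1(f)$; the complement is handled the same way using $Q^+$. Hence $B$ is $\Delta^0_{1+\alpha}(f)$.

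By strong relativization, $B=R_f$ for some $\Delta^0_{1+\alpha}$ set $R\subseteq\baire\times\N$. By the coding, $R=Q^+_e$ for some $e\in D$. Choose $n$ with $h(n)=e$. Then
\[
Q^+(e,f,n)\iff n\in B\iff\neg Q^+(e,f,n),
\]
which is a contradiction.

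The main point to check is the middle step: the singleton theorems must give strong relativization at $\baire\times\N$ (not just at $\N$), uniformly enough that the resulting $R$ is a genuine $\Delta^0_{1+\alpha}$ subset of the product and so is captured by the coding. Corollary \ref{cor:reciso} handles this. The rest is routine closure of $\Sigma^0_{1+\alpha}(f)$ under substitution of $f$-computable functions.
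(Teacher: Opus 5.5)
Your proof is correct and is essentially the paper's argument: pass to a nice coding of the $\Delta^0_{1+\alpha}$ subsets of $\baire\times\N$, take a $\Pi^0_1$-singleton $f$ that can enumerate $D$ (the paper takes $f\equiv_T\emptyset^{(\beta)}$ with $D\leq_T\emptyset^{(\beta)}$), note that $f$ strongly relativizes $\Delta^0_{1+\alpha}$, and rerun the diagonalization from Theorem \ref{thm:hyp}. One small correction to your closing remark: because $B\subseteq\N$, what you actually use is strong relativization at $\N$, and the witness $R\subseteq\baire\times\N$ comes automatically from $\baire$ being the parameter space; for $\alpha=0$ this follows from Theorem \ref{thm:pi01singleton} via part (2) of Corollary \ref{cor:reciso}, not part (1).
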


\begin{proof}
    Suppose towards a contradiction that there is a nice coding of 
    $\Delta^0_{1+\alpha}$ subsets of $\baire$.  Since $\baire$ and $\baire\times \N$ are computably isomorphic, we can easily construct
    a nice coding $D, Q^+, Q^-$ of $\Delta^0_{1+\alpha}$-subsets of
    $\baire\times \N$.  Since $D$ is $\Delta^1_1$, we can 
    pick $\beta<\omegack$
    such that $D$ is computable in $\emptyset^{(\beta)}$.  Let $f\in \baire$ be a $\Pi^0_1$-singleton
    with $f\equiv_T \emptyset^{(\beta)}$.  Of course, $f$ is also
    a $\Pi^0_{1+\alpha}$-singleton, so
    $f$ strongly relativizes $\Delta^0_{1+\alpha}$ at $\N$.  However,
    since $f$ can computably enumerate the elements of $D$,
    we can again use the construction in (ii)$\Rightarrow$(iii) of Theorem \ref{thm:hyp} to build a $\Delta^0_{1+\alpha}(f)$ set of
    integers which is not $\mathcal{S}\Delta^0_{1+\alpha}(f)$.
    Contradiction.  
\end{proof}

\appendix

\section{Computably Baire-dominated reals}

In this appendix, we give Lutz's proof that
all computably Baire-dominated reals are computable.  
First, we recall the definition and prove a useful
equivalence.

\begin{definition}
    We say that $A\in 2^\N$ is 
    \textbf{computably Baire-dominated} if
    every $A$-computable total $F:\baire \rightarrow \N$ is dominated pointwise by a computable total $G:\baire \rightarrow \N$, i.e.,
    $F(f)\leq G(f)$ for all $f\in \baire$.
\end{definition}

\begin{theorem}
    $A\in 2^\N$ is computably Baire-dominated if and only
    if every $A$-computable wellfounded tree on $\N$
    is contained in a computable wellfounded tree on $\N$. 
\end{theorem}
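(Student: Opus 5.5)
The plan is to translate in both directions between total functionals $F:\baire\rightarrow\N$ and wellfounded trees on $\N$. Given a total $A$-computable $F=\Phi_e^A$, let $T_F$ be the tree of strings $\sigma$ such that the computation $\Phi_e^{A}(\sigma)$ (run for $|\sigma|$ steps, querying only $\sigma$) has not yet converged. This tree is $A$-computable and closed downward. It is wellfounded exactly because $F$ is total: along any $f$, the computation $\Phi^A_e(f)$ halts after reading some finite $\sigma\sqsubset f$ within some number of steps. Conversely, given a tree $T$ on $\N$, let $F_T(f)$ be the least $n$ with $f\restriction n\notin T$. This is total iff $T$ is wellfounded, and it is computable from $T$.

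($\Leftarrow$) Assume every $A$-computable wellfounded tree is contained in a computable wellfounded tree, and let $F$ be total $A$-computable. I would not bound $F$ by $T_F$ directly, since the rank of $f$ in $T_F$ does not control the output value. Instead I would enlarge the tree to encode outputs. Let $T'$ consist of the strings $\sigma$ such that, for every $\tau\sqsubseteq\sigma$ on which the computation of $F$ has halted with output $m$, we have $|\sigma|\leq |\tau|+m$. In other words, once the computation halts with value $m$, we allow exactly $m$ further levels. $T'$ is $A$-computable, and it is wellfounded because every branch dies $m$ levels after halting. So $T'\subseteq S$ for some computable wellfounded $S$. Put $G(f)=$ the least $n$ with $f\restriction n\notin S$, which is computable and total. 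If $F(f)=m$ with halting at $\tau\sqsubset f$, then $f\restriction(|\tau|+m)\in T'\subseteq S$, so $G(f)>|\tau|+m\geq m$.

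($\Rightarrow$) Let $T$ be an $A$-computable wellfounded tree, so $F_T$ is total and $A$-computable. Take a computable total $G\geq F_T$. Define $S=\{\sigma : |\sigma|< G'(\sigma)\}$, where $G'(\sigma)$ is the value of $G$ on extensions of $\sigma$ once it is determined by $\sigma$. To make this precise, let $S$ be the set of $\sigma$ such that for no $\tau\sqsubseteq\sigma$ does the computation of $G$ converge on $\tau$ (within $|\sigma|$ steps) with value $\leq|\tau|$. $S$ is computable and downward closed. It is wellfounded, since along any $f$ the computation $G(f)=m$ halts at some $\tau$, and by padding $f\restriction k$ for $k\geq\max(m,|\tau|)$ that $\tau$ (or the later stage) yields value $\leq$ length, so the branch dies. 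For the containment $T\subseteq S$: if $\sigma\in T$, extend $\sigma$ by zeros to some $f$. Any $\tau\sqsubseteq\sigma$ at which $G$ converges has value $G(f)\geq F_T(f)>|\sigma|\geq|\tau|$, so $\sigma\in S$.

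The main obstacle is setting up the time/use bookkeeping correctly, so that the trees are genuinely computable (respectively $A$-computable) and downward closed, and so that a halting value gets compared against the length. The trick of adding $m$ levels in ($\Leftarrow$) is the key idea there, and I expect the rest to be routine.
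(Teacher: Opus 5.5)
Your proof is correct. The ($\Rightarrow$) direction is essentially the paper's: there too one dominates $F_T$ by a computable $G=\Phi_e$ and keeps only the strings $\sigma$ at which $\Phi^\sigma_{e,|\sigma|}$ has not converged to a value $<|\sigma|$. Checking all prefixes $\tau\sqsubseteq\sigma$, as you do, is a cosmetic variation that makes downward closure immediate.

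The ($\Leftarrow$) direction is where you genuinely differ. Your remark that $F$ cannot be bounded from $T_F$ directly is too pessimistic, because the paper does exactly that. With $T_F\subseteq S$ for a computable wellfounded $S$, it computes the exit level $n_f$ of $f$ from $S$. It then knows that $\Phi_e^{A\restriction n_f}(f\restriction n_f)$ has converged within $n_f$ steps. It sets $G(f)$ to be the maximum of $\Phi^{\tau}_{e,n_f}(f\restriction n_f)$ over all $\tau\in 2^{n_f}$ for which this converges. This is computable because there are only finitely many candidate oracle prefixes.

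You instead build the output value into the height of the tree, so that the exit level from $S$ is itself the dominating functional. The paper's tree is simpler, but its maximization step uses $A\in 2^\N$. Your argument never inspects the possible oracle prefixes, so it works verbatim for oracles $A\in\baire$ (with the obvious extension of the definition). It also yields a dominating functional of the especially simple form $f\mapsto$ (least $n$ with $f\restriction n\notin S$).

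One small point needs care. In the last step, take $\tau$ to be the \emph{shortest} prefix of $f$ at which the computation has halted within $|\tau|$ steps. For a longer $\tau$, the string $f\restriction(|\tau|+m)$ has the shortest halting prefix $\tau_0$ below it and violates $|\sigma|\leq|\tau_0|+m$, so it is not in $T'$. The conclusion $G(f)>m$ is unaffected once the shortest prefix is used.
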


\begin{proof}
    $(\Rightarrow)$  Suppose $A$ is computably Baire-dominated,
    and let $T$ be an $A$-computable wellfounded tree on $\N$.
    Let $F:\baire \rightarrow \N$ be defined so that
    $F(f)$ is the least natural number $n$ such that
    $f\restriction n\notin T$.  Since $F$ is $A$-computable,
    there is a computable $G:\baire \rightarrow \N$ which
    dominates $F$ pointwise.  
    Let $G=\Phi_e$.  Here, $\Phi_e$ is a Turing
    functional that takes an
    oracle $f\in\baire$, runs program $e$ (with no other inputs), and
    outputs a natural number if it converges.  Define
    \[
    S:= \{\sigma\in \N^{<\N} : (\forall k)[\Phi^\sigma_{e, |\sigma|}\downarrow = k \implies |\sigma|\leq k]\}
    \]
    $S$ is clearly a computable tree.  It is wellfounded
    since for any $f\in \baire$, arbitrarily long initial
    segments of $f$ make $\Phi_e$ converge, including
    those with length larger than $G(f)= \Phi_e^f$.  
    Finally, we show that $T\subseteq S$.  If $\sigma\in T$,
    then $G(f)=\Phi_e^f\geq F(f)> |\sigma|$ for every $f$ extending $\sigma$.  Moverover, if $\Phi_{e, |\sigma|}^\sigma \downarrow$, then it must equal $G(f)$
    for any $f$ extending $\sigma$.  Thus, $\sigma\in S$.

    $(\Leftarrow)$ Suppose every $A$-computable
    wellfounded tree on $\N$ is contained in a computable
    wellfounded tree.  Let $F:\baire\rightarrow \N$ be
    $A$-computable.  Pick $e\in \N$ such that $F=\Phi^A_e$.
    Define $T:= \{\sigma\in \N^{<\N} : \Phi_e^{A\restriction |\sigma|}(\sigma)\uparrow\}$.  $T$ is clearly an $A$-computable tree and
    it is wellfounded since $F=\Phi^A_e$ is total.  
    Let $S$ be a computable wellfounded tree with $T\subseteq S$.
    For every $f\in \baire$, let $n_f$ be the least number
    with $f\restriction n_f\notin S$, and note that
    $f\mapsto n_f$ is computable.  Now, define $G:\baire \rightarrow \N$ by 
    \[
    G(f) = \max \{ \Phi_{e, n_f}^{\tau}(f\restriction n_f) : \tau\in 2^{|n_f|} \ \& \ \Phi_{e, n_f}^{\tau}(f\restriction n_f)\downarrow \}.
    \]
    Then, $G$ is computable and pointwise dominates $F$.  
\end{proof}

Fix some reasonable way of coding trees on $\N$
with elements
of Cantor space.  Let $\text{Tr}\subseteq 2^\N$ be the set
of reals which code trees, and let $\text{WF}$ be the set of
reals which code wellfounded trees.

\begin{lemma}[Lutz] \label{app:treecodemap}
    Let $A\in 2^\N$ be computably Baire-dominated and
    suppose $F:2^\N\rightarrow 2^\N$ is a computable function
    such that $F(A)\in \text{WF}$.
    Then there is a $\Pi^0_1$ set $\mathcal{C}\subseteq 2^\N$ 
    such that
    $A\in \mathcal{C}$ and for every $B\in \mathcal{C}$, either 
    $F(B)\notin \text{Tr}$ or $F(B)\in \text{WF}$.
\end{lemma}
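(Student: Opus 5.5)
The plan is to apply the equivalence just proved to the $A$-computable tree $F(A)$. Since $F$ is computable and $F(A)\in\text{WF}$, the tree $T:=F(A)$ (the tree coded by $F(A)$) is an $A$-computable wellfounded tree on $\N$. By the $(\Rightarrow)$ direction of the preceding theorem, there is a computable wellfounded tree $S$ on $\N$ with $T\subseteq S$. Then I define
\[
\mathcal{C}:=\{B\in 2^\N : F(B)\in \text{Tr}\implies \text{the tree coded by } F(B)\subseteq S\}.
\]
If $B\in\mathcal{C}$ and $F(B)\in\text{Tr}$, then $F(B)$ codes a subtree of the wellfounded tree $S$, so $F(B)\in\text{WF}$. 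Moreover $A\in\mathcal{C}$ because $T\subseteq S$. So the content of the lemma reduces to checking that $\mathcal{C}$ is $\Pi^0_1$.

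For the complexity I will fix the coding of trees concretely: a real $X$ codes the set $\{\sigma : X(\ulcorner\sigma\urcorner)=1\}$. I choose the coding so that $\text{Tr}$ is $\Pi^0_1$, since "closed under initial segments" is a universal condition on pairs of bits of $X$. Under this coding, the condition "$F(B)$ codes a set contained in $S$" is $\Pi^0_1$ in $B$: it says that for every $\sigma\notin S$, the bit $F(B)(\ulcorner\sigma\urcorner)$ equals $0$. This is a universal statement over a computable condition, because $F$ is computable and total on $2^\N$ and $S$ is computable. Rather than use the implication form, I will simply take
\[
\mathcal{C}:=\{B : (\forall\sigma\notin S)\, F(B)(\ulcorner\sigma\urcorner)=0\}.
\]
This is $\Pi^0_1$ and already implies the conclusion: if $F(B)\in\text{Tr}$, it codes a subset of $S$, hence a wellfounded tree. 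Note also that $A\in\mathcal{C}$, since every $\sigma$ outside $S$ is outside $T$.

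The main (mild) obstacle is making sure the coding convention for $\text{Tr}$ and $\text{WF}$ is the one intended by "reasonable". I will state explicitly that a code is the characteristic function of a set of (codes of) finite sequences. If a different coding is intended, I would first transfer to this one via a computable total translation, which preserves $\Pi^0_1$-ness of the pullback. I will also double-check that $T$ is genuinely $A$-computable, which is immediate since $F(A)\le_T A$ via a total computable functional. Beyond that, the argument is a direct application of the preceding theorem.
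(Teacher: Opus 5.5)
Your proposal is correct and follows essentially the same route as the paper. You apply the $(\Rightarrow)$ direction of the preceding theorem to get a computable wellfounded tree $S$ containing the tree coded by $F(A)$, and take $\mathcal{C}$ to be the set of $B$ such that $F(B)$ accepts no node outside $S$; this is $\Pi^0_1$ since $F$ is computable and total and $S$ is computable. Your version, phrased via the bits of $F(B)$, is in fact the precise form of the paper's condition, which loosely says ``$B$ accepts $\sigma$'' where $F(B)$ is meant.
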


\begin{proof}
    Since $F$ is computable, the wellfounded tree coded by $F(A)$ is computable relative to $A$.  
    Since $A$ is computably Baire-dominated, there is a computable
    tree $T$ which contains the tree coded by $F(A)$.  
    Let $\mathcal{C}$ be the set of all $B$ such that,
    for all $\sigma\in \N^{<\N}$, if $B$ accepts $\sigma$
    as a node
    (according to the reasonable coding scheme), then
    $\sigma \in T$.  Clearly, $\mathcal{C}$ is $\Pi^0_1$
    and every $B\in \mathcal{C}$ either does not code a tree,
    or else 
    codes a tree that is a subset of the wellfounded tree $T$.
\end{proof}

\begin{lemma}[Lutz] \label{app:avoidinfinitepi01}
    $2^\N$ can be partitioned into two $\Pi^1_1$ sets $P$ and $Q$ such 
    that neither $P$ nor $Q$ contain any infinite
    $\Pi^0_1$ sets.
\end{lemma}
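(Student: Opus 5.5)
The plan is a Bernstein-style construction run effectively, using only hyperarithmetic points as diagonalization witnesses. Since $P$ and $Q=2^\N\setminus P$ must both be $\Pi^1_1$, it is enough to build a $\Delta^1_1$ set $Q\subseteq 2^\N$ such that every infinite $\Pi^0_1$ set $\mathcal{C}\subseteq 2^\N$ meets both $Q$ and $2^\N\setminus Q$. I will make $Q$ a countable set of hyperarithmetic reals, $Q=\{y_n : n\in\N\}$, together with an auxiliary sequence $x_n\notin Q$, and set $P=2^\N\setminus Q$.

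\emph{Step 1 (witnesses exist).} First show that every infinite $\Pi^0_1$ class $\mathcal{C}=[T]$, with $T$ a computable tree, has infinitely many $\Delta^1_1$ members, and that they can be found uniformly. If $\mathcal{C}$ is countable, all its members are hyperarithmetic by the Cantor--Bendixson analysis: the rank is computable and the isolated points at each level are computable from the appropriate iterate of the derived tree. If $\mathcal{C}$ is uncountable, its perfect kernel is $[T^*]$ for a tree $T^*$ that is $\Delta^1_1$ uniformly in an index for $T$. Leftmost paths through the subtrees $T^*_\sigma$ then give infinitely many distinct $\Delta^1_1$ members. So for each index $e$ of an infinite $\Pi^0_1$ class $\mathcal{C}_e$ we get a uniformly $\Delta^1_1$ list $z_{e,0}, z_{e,1},\dots$ of distinct members. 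Note that ``$\mathcal{C}_e$ is infinite'' is an arithmetical condition on $e$.

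\emph{Step 2 (the construction).} Go through the indices $e$ in order. At each $e$ with $\mathcal{C}_e$ infinite, take the two least $k$ such that $z_{e,k}$ differs from all previously chosen $x$'s and $y$'s; this is possible because only finitely many points have been used. Call the first of these $x_e$ and the second $y_e$, put $y_e$ into $Q$, and commit to keeping $x_e$ out of $Q$. Since every point is chosen fresh, the $x$'s and $y$'s never collide, so each infinite $\Pi^0_1$ class contains $x_e\in P$ and $y_e\in Q$.

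\emph{Step 3 (definability, the main obstacle).} It remains to check that $Q$ is $\Delta^1_1$. Each single step of the construction is hyperarithmetic, since it compares $\Delta^1_1$ reals given by indices, but the sequence as a whole is only computable from $\mathcal{O}$. To handle this I would code the finite initial segments of the construction as finite sequences of $\Delta^1_1$-indices; equivalently, these are hyperarithmetic reals. I would then define ``$s$ is a correct run of the construction up to stage $m$'' in a way that is $\Pi^1_1$ and uses no unbounded quantifier over non-hyperarithmetic reals. This should work because correctness of a finite run only involves:
\begin{enumerate}[(a)]
\item arithmetic facts about the indices $e$, namely whether $\mathcal{C}_e$ is infinite;
\item membership of the indices in the $\Pi^1_1$ set of good $\Delta^1_1$-indices;
\item equalities and inequalities among the finitely many reals named, which are $\Delta^1_1$ uniformly in good indices.
\end{enumerate}
Then
\[
x\in Q \iff (\exists \text{ hyp } s)\,(\exists n)\,[s \text{ correct} \ \&\ x=y_n^s],
\]
\[
x\notin Q \iff x\notin\mathrm{HYP} \ \vee\ (\exists \text{ hyp } s)\,[s \text{ correct through the stage at which every point $\leq_T x$-index is settled} \ \&\ x\neq y_n^s \text{ for all } n].
\]
By Kleene's theorem that existential quantification over $\mathrm{HYP}$ preserves $\Pi^1_1$, the first line is $\Pi^1_1$; the $x\notin\mathrm{HYP}$ disjunct in the second line is $\Pi^1_1$ as well. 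The delicate point is arranging the second line so that it is $\Pi^1_1$: for a hyperarithmetic $x$ there must be a finite stage, bounded in terms of an index of $x$, after which $x$ can no longer enter $Q$. I would secure this by adding the rule that $y_e$ must have a $\Delta^1_1$-index greater than $e$, so that $x$ with index $i$ can only appear as some $y_e$ with $e< i$. If this works, both $Q$ and $P$ are $\Pi^1_1$, and the lemma follows.
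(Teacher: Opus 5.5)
\textbf{There is a genuine gap, in Step 3.} Your overall plan (put one fresh point of each infinite $\Pi^0_1$ class into $Q$, reserve another fresh point for $P$, and make $Q$ a countable definable set) is the same diagonalization the paper uses. But Step 3 carries the whole content of the lemma, and you leave it as a hedged sketch whose key device fails.

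Requiring that $y_e$ have a $\Delta^1_1$-index greater than $e$ restricts nothing. By padding, every hyperarithmetic real has good indices of every size. So an index of $x$ gives no bound on the stage at which $x$ could enter $Q$. Your second displayed equivalence, whose ``stage at which every point $\leq_T x$-index is settled'' is never defined, is therefore unjustified.

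Suppose you strengthen the rule to ``no good index $\leq e$ names $y_e$''. Then checking a run means asserting that certain numbers are \emph{not} good indices, which is $\Sigma^1_1$. So ``$s$ is correct'' is no longer $\Pi^1_1$, and the quantification trick no longer applies. As it stands you have at most shown that $Q$ is $\Pi^1_1$, not that $P$ is.

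\textbf{The obstacle comes from your witness choice in Step 1.} Branching on whether $\mathcal{C}_e$ is countable is not hyperarithmetic in $e$: countability of a $\Pi^0_1$ class in $2^\N$ is a $\Pi^1_1$-complete property of its index. Also, the perfect kernel of a $\Pi^0_1$ class is in general only $\Sigma^1_1$. So your list $z_{e,k}$ is not uniformly $\Delta^1_1$ as claimed, which is why you ended up needing $\mathcal{O}$.

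This is not just a presentation issue. A $\Sigma^1_1$ set of hyperarithmetic reals is bounded: all its members are computable from a single $\emptyset^{(\alpha)}$ with $\alpha<\omegack$. So if your $y_e$ climbed unboundedly through the hyperarithmetic hierarchy, $Q$ could not be $\Sigma^1_1$, and $P$ could not be $\Pi^1_1$, whatever bookkeeping you did.

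\textbf{The missing idea, which is the paper's proof, is to choose witnesses at a fixed arithmetical level using compactness.}
\begin{enumerate}
\item If $[T_e]$ is infinite and $F$ is the finite set of points used so far, there is $\sigma$ with $[T_e]\cap[\sigma]\neq\emptyset$ and $[\sigma]\cap F=\emptyset$.
\item Such a $\sigma$ is found by a search computable in $\emptyset'$ together with the previously chosen points.
\item The leftmost path of $T_e$ through $\sigma$ is $\emptyset'$-computable.
\end{enumerate}
The whole recursion, including the arithmetical test ``$[T_e]$ is infinite'', is then computable in a fixed finite jump of $\emptyset$. Hence $x\in Q\iff(\exists e)\,[\,[T_e]\text{ is infinite}\ \&\ (\forall n)\,x(n)=y_e(n)]$ is arithmetical. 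So $P$ and $Q$ are both $\Delta^1_1$, and no Step 3 is needed.
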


\begin{proof}
    We define two arithmetical sequences $A_0, A_1, \dots$ and
    $B_0, B_1, \dots$ of reals using the following recursion.
    Given $A_i$ and $B_i$ for $i<e$, check if $\varphi_e$ codes
    a binary tree with infinitely many paths.  If it does,
    then pick $A_e$ and $B_e$ to be two distinct paths
    which are different from all the $A_i, B_i$ for $i<e$. 
    By compactness, all of this is easily seen to be arithmetical.
    Now, set $P=\{A_i : i\in \N\}$ and $Q= 2^\N\setminus P$.  
\end{proof}

\begin{theorem}[Lutz]
    If $A\in 2^\N$ is computably Baire-dominated, then $A$
    is computable.
\end{theorem}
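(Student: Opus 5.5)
The plan is to combine the two lemmas of Lutz: first trap $A$ inside a $\Pi^0_1$ class that sits entirely within one of the two pieces of the partition, and then use the fact that a finite $\Pi^0_1$ class has only computable members.

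Fix the partition $2^\N=P\sqcup Q$ from Lemma \ref{app:avoidinfinitepi01}, with $P,Q$ both $\Pi^1_1$ and neither containing an infinite $\Pi^0_1$ set. By symmetry we may assume $A\in P$; the case $A\in Q$ is identical.

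Since $P\subseteq 2^\N$ is $\Pi^1_1$, the Kleene normal form gives a computable tree $T$ such that
\[
B\in P\iff (\forall g\in\baire)(\exists n)\,(B\restriction n, g\restriction n)\notin T.
\]
Equivalently, $B\in P$ iff the section tree $T_B=\{\sigma : (B\restriction|\sigma|,\sigma)\in T\}$ is wellfounded. The map $F:2^\N\to 2^\N$ sending $B$ to the code of $T_B$ is computable and total. We arrange, which is routine for a reasonable coding, that $F(B)\in\text{Tr}$ for every $B$. Then $F(B)\in\text{WF}$ holds iff $B\in P$, and in particular $F(A)\in\text{WF}$.

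Next apply Lemma \ref{app:treecodemap} to $A$ and $F$. It yields a $\Pi^0_1$ set $\mathcal{C}\ni A$ such that every $B\in\mathcal{C}$ has $F(B)\notin\text{Tr}$ or $F(B)\in\text{WF}$. The first alternative never occurs, so $F(B)\in\text{WF}$ for all $B\in\mathcal{C}$, i.e.\ $\mathcal{C}\subseteq P$. Because $P$ contains no infinite $\Pi^0_1$ set, $\mathcal{C}$ is finite.

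Finally, every member of a finite $\Pi^0_1$ subset of $2^\N$ is computable. Write $\mathcal{C}=[S]$ for a computable binary tree $S$. Choose a string $\tau\sqsubset A$ long enough to separate $A$ from the finitely many other members of $\mathcal{C}$. Then $A$ is the unique path of $S$ through $\tau$. To compute $A\restriction m$, search for a level $k\geq m$ at which all extensions of $\tau$ in $S$ of length $k$ agree up to length $m$. Such a $k$ exists by compactness, since otherwise König's lemma would give two distinct paths of $S$ through $\tau$. Hence $A$ is computable.

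The only delicate step is ensuring that $F$ is total and always outputs a tree code, so that the first alternative in Lemma \ref{app:treecodemap} is vacuous. I would handle this by defining $T_B$ directly from the normal form, so that it is a tree for every $B$; the remaining steps are standard.
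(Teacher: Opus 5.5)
Your proof is correct and follows the paper's argument. Like the paper, you reduce membership in $P$ to wellfoundedness via a total computable $F$ with range in $\text{Tr}$, then use Lemma \ref{app:treecodemap} to trap $A$ in a $\Pi^0_1$ class $\mathcal{C}\subseteq P$, conclude from Lemma \ref{app:avoidinfinitepi01} that $\mathcal{C}$ is finite, and deduce that $A$ is computable. You also spell out two standard facts the paper leaves implicit: the normal-form construction of $F$, and the reason members of finite $\Pi^0_1$ classes are computable.
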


\begin{proof}
    Let $P, Q\subseteq 2^\N$ be as in Lemma \ref{app:avoidinfinitepi01}.  Without loss of generality,
    assume $A\in P$.  Since $P$ is $\Pi^1_1$, there is a
    computable $F:2^\N\rightarrow 2^\N$ such that,
    for all $B\in 2^\N$, $F(B)\in \text{Tr}$ and, moreover, 
    $B\in P$ if and only if $F(B)\in \text{WF}$. By Lemma \ref{app:treecodemap},
    there is a $\Pi^0_1$ set $\mathcal{C}\subseteq 2^\N$
    such that $A\in P$ and for all $B\in \mathcal{C}$, either $F(B)\notin \text{Tr}$ or $F(B)\in \text{WF}$. Since the range of $F$
    is contains in $\text{Tr}$, this implies
    $\mathcal{C}\subseteq P$.
    Since $P$ does not contain any infinite $\Pi^0_1$ sets,
    it follows that $\mathcal{C}$ is finite.  Since $A$ is
    a member of the finite $\Pi^0_1$ set $\mathcal{C}\subseteq 2^\N$, $A$ is computable.  
\end{proof}

\bibliographystyle{alpha}
\bibliography{main.bib}

@article{hkl,
     AUTHOR = {Harrington, Leo A. and Kechris, Alexander S. and 
              Louveau, Alain},
     TITLE = {A {G}limm-{E}ffros dichotomy for {B}orel equivalence relations},
   JOURNAL = {J. Amer. Math. Soc.},
  FJOURNAL = {Journal of the American Mathematical Society},
    VOLUME = {3},
      YEAR = {1990},
    NUMBER = {4},
     PAGES = {903--928},
}

@book{moschovakis2009,
    AUTHOR = {Moschovakis, Yiannis N.},
    TITLE = {Descriptive Set Theory},
    SERIES = {Mathematical Surveys and Monographs},
    VOLUME = {155},
    PUBLISHER = {American Mathematical Society, Providence},
    YEAR= {2009}
    }

@phdthesis{thesis,
    title    = {The Effective Theory of Graphs, Equivalence Relations, and Polish Spaces},
    school   = {University of California, Los Angeles},
    author   = {Tyler Arant},
    year     = {2019},
}

@book{chong2015,
    AUTHOR = {Chong, Chi Tat and Yu, Liang},
    TITLE = {Recursion Theory},
    SERIES = {De Gruyter Series in Logic and its Applications},
    VOLUME = {8},
    PUBLISHER = {De Gruyter, Berlin/Boston},
    YEAR = {2015},
    ISBN = {978-3-11-027555-1}
}

@article{miller1968, title={The Degrees of Hyperimmune Sets}, volume={14}, DOI={https://doi.org/10.1002/malq.19680140704}, number={7-12}, journal={Z. Math. Logik Grundlagen Math.}, publisher={Wiley}, author={Miller, Webb and Martin, Donald A.}, year={1968}, pages={159–166} }

@book{soare_2016, address={Berlin ; Heidelberg}, title={Turing Computability : Theory and Applications}, ISBN={9783642319327}, publisher={Springer. C}, author={Robert I. Soare}, year={2016} }

@article{macintyre_1977, title={Transfinite extensions of {F}riedberg’s completeness criterion}, volume={42}, DOI={10.2307/2272313}, number={1}, journal={J.  Symb. Log.}, author={MacIntyre, John M.}, year={1977}, pages={1–10}}

@article{feferman_1965, title={Some applications of the notions of focing and generic sets}, author={Feferman, Solomon}, journal={Fundamenta Mathematicae}, volume={56}, number={3}, year={1964}, pages={325-345}}

\end{document}